\documentclass[12pt]{article}
\RequirePackage{microtype}
\RequirePackage{etoolbox}
\RequirePackage{booktabs}
\RequirePackage{refcount}
\RequirePackage{totpages}
\RequirePackage{environ}

\RequirePackage{bbm}

\usepackage[dvipsnames]{xcolor}
\usepackage{fullpage,float,amsmath,amsfonts,amssymb,amsthm,thmtools,mathtools,bm}
\usepackage{mathrsfs}
\usepackage{tikz}
\usepackage{enumitem}
\usepackage{caption}
\usepackage{hyperref}

\usetikzlibrary{positioning}
\usetikzlibrary{shapes.geometric}
\usetikzlibrary{arrows.meta}
\usepackage{tikz-cd} 

\usepackage{hyperref}
\hypersetup{linktocpage}
\hypersetup{
    colorlinks,
    citecolor=red,
    filecolor=black,
    linkcolor=blue ,
    urlcolor=red
}

\usepackage{algorithm}
\usepackage{chngcntr}
\counterwithin{algorithm}{section}
\usepackage[noend]{algpseudocode}
\algnewcommand{\LeftComment}[1]{\Statex \(\triangleright\) #1}
\algnewcommand\algorithmicassumptions{\sc{Assumptions:}}
\algnewcommand\Assumptions{\item[\algorithmicassumptions]}
\algrenewcomment[1]{\hfill \(\triangleright\) \emph{\small #1}} 
\algnewcommand{\algorithmicand}{\textbf{and}}
\algnewcommand{\algorithmicor}{\textbf{or}}
\algnewcommand{\FOR}{\algorithmicfor}
\algnewcommand{\OR}{\algorithmicor}
\algnewcommand{\AND}{\algorithmicand}
\algnewcommand{\IF}{\algorithmicif}
\algnewcommand{\THEN}{\algorithmicthen}
\algnewcommand{\ELSE}{\algorithmicelse}
\algnewcommand{\Fail}{\textsc{Fail}}
\algnewcommand{\Cert}{\textsc{Cert}}
\algnewcommand{\NoCert}{\textsc{NoCert}}
\algnewcommand{\Flag}{\textsc{Flag}}
\algnewcommand{\CommentLine}[1]{\(\triangleright\) \emph{\small #1}}
\algnewcommand{\InlineFor}[2]{\algorithmicfor\ #1\ \algorithmicdo\ #2} 
\algnewcommand{\InlineIf}[2]{
  \algorithmicif\ #1\ \algorithmicthen\ #2}
\algnewcommand{\InlineIfElse}[3]{
  \algorithmicif\ #1\ \algorithmicthen\ #2\ \algorithmicelse\ #3}

\makeatletter
\let\original@algocf@latexcaption\algocf@latexcaption
\long\def\algocf@latexcaption#1[#2]{%
  \@ifundefined{NR@gettitle}{%
    \def\@currentlabelname{#2}%
  }{%
    \NR@gettitle{#2}%
  }%
  \original@algocf@latexcaption{#1}[{#2}]%
}
\makeatother

\makeatletter
\newcounter{algorithmicH}
\let\oldalgorithmic\algorithmic
\renewcommand{\algorithmic}{%
  \stepcounter{algorithmicH}
  \oldalgorithmic}
\renewcommand{\theHALG@line}{ALG@line.\thealgorithmicH.\arabic{ALG@line}}
\makeatother

\algrenewcommand\Call[2]{\nameref{#1}\ifthenelse{\equal{#2}{}}{}{\ensuremath{(#2)}}}%

\newcommand{\algoCaptionLabel}[2]{%
     \caption[\textproc{#1}]{\textproc{#1}\ifthenelse{\equal{#2}{}}{}{$(#2)$}}%
     \label{algo:#1}%
     }%

\usepackage[capitalize]{cleveref}

\def\mA{{\bm A}}
\def\ma{{\bm a}}
\def\mB{{\bm B}}

\def\mX{{\bm X}}
\def\vx{{\bm x}}

\def\P{\mathbb{P}}
\def\A{\mathbb{A}}

\def\F{\mathbb{F}}
\def\K{\mathbb{K}}
\def\Kbar{\overline{\mathbb{K}}{}}

\def\C{\mathbb{C}}
\def\L{\mathbb{L}}

\def\Q{\mathbb{Q}}

\def\Z{\mathbb{Z}}
\def\N{\mathbb{N}}

\def\sfA{\mathsf{A}}
\def\sfP{\mathsf{P}}
\newcommand{\nondef}{{{\textsf{undefined}}}}
\def\coeff{\mathrm{coeff}}
\def\acoeff{\mathbf{coefficients}}
\def\lc{\mathrm{lc}}

\providecommand{\keywords}[1]
{
  \smallskip\noindent\small	
  \textbf{\textbf{Keywords---}} #1
}

\declaretheorem[style=plain,parent=section]{definition}
\declaretheorem[sibling=definition]{theorem}

\declaretheorem[sibling=definition]{proposition}
\declaretheorem[sibling=definition]{lemma}
\declaretheorem[sibling=definition]{fact}
\declaretheorem[style=remark,sibling=definition,qed={\qedsymbol}]{remark}

 

\title{Primes of bad reduction for systems of polynomial equations}

\author{
    Jesse Elliott\footnotemark[1]\hspace{2mm}\footnotemark[2]
    \quad
    Éric Schost\footnotemark[2]
}

\begin{document}
\maketitle

\footnotetext[1]{Sorbonne Université, Inria, IMJ--PRG, Paris, France.
Email: \texttt{jesse.elliott@inria.fr}.}

\footnotetext[2]{University of Waterloo, David R. Cheriton School of Computer Science, Waterloo, Canada.
Emails: \texttt{jakellio@uwaterloo.ca}, \texttt{eschost@uwaterloo.ca}.}




\begin{abstract}
  Consider polynomials $F_1,\dots,F_s$ in $\K[X_1,\dots,X_n]$ over a
  field $\K$, their zero-set $V(F_1,\dots,F_n)$ in $\Kbar^n$ and its
  decomposition into equidimensional components $V_0,\dots,V_n$ (with
  $V_i$ either empty or of dimension $i$ for all $i$). To each $V_i$, we can
  associate its Chow forms, which are polynomials in new variables
  $(U_{k,j})_{0\le k\le i, 0 \le j \le n}$, uniquely defined up to a
  scalar factor. These Chow forms completely characterize $V_i$: we
  can recover equations for $V_i$ from them, and their degree is
  $(i+1)$ times the degree of $V_i$.

  We discuss the situation when the $F_i$'s have integer coefficients,
  and study the question of when the Chow forms of the $V_i$'s defined
  as above can be reduced modulo $p$ to give Chow forms of the
  equidimensional components of $V(F_1 \bmod p,\dots,F_s \bmod p)$. We
  show that this is the case as soon as $p$ does not divide a certain
  nonzero integer $\Delta$ of height $O(n^{14} s h d^{3n+4})$, with $d$
  and $h$ bounds on respectively the degrees and heights of the
  $F_i$'s.

  \keywords{Polynomial system solving; Chow forms; bit complexity}
\end{abstract}

\section{Introduction}

Consider polynomials $F_1,\dots,F_s$ with integer coefficients; they
define an algebraic set $V=V(F_1,\dots,F_s)$ in $\C^n$. We make no
regularity assumption on these polynomials, and in particular we do
not assume that $V$ is equidimensional. As a result, we can write the
equidimensional decomposition of $V$ as $V = V_0 \cup \cdots \cup
V_{n-1} \cup V_n$, where for all $i$, $V_i$ is what we will call the
$i$th equidimensional component of $V$, that is, the union of its
irreducible components of dimension $i$ (it may be empty).

Our purpose in this paper is to quantify primes of ``bad reduction''
for $V$. One can define ``good'' and ``bad'' primes in several
possible manners. For instance, one could ask that the Gr\"obner basis
of $(F_1,\dots,F_s)$ reduces modulo $p$ to that of $(F_1 \bmod
p,\dots,F_s \bmod p)$. Our definition will be of a geometric nature:
we want to express that ``reducing'' $V_0, \dots, V_{n-1}, V_n$ modulo
$p$ gives the equidimensional decomposition of $V(F_1 \bmod
p,\dots,F_s \bmod p) \subset \overline{\F_p}^n$. This will imply in
particular that for all $i$, the $i$th equidimensional components of
$V$ and of $V(F_1 \bmod p,\dots,F_s \bmod p)$ have the same degree,
but our requirements will be stronger. Note that even in the simple
case where $V(F_1,\dots,F_s)$ has dimension zero, $V(F_1 \bmod
p,\dots,F_s \bmod p)$ may have components of arbitrary dimensions
(they give rise to primary components of the ideal $\langle
F_1,\dots,F_s\rangle \subset \Z[X_1,\dots,X_n]$ that contain
the prime $p$).

An equidimensional algebraic set $Y$ can be described by its {\em Chow
  forms}: if $Y$ defined over a field $\K$ and has dimension $r$, its
Chow forms are squarefree polynomials, {\it a priori} in $\Kbar[\bm
  U_0,\dots,\bm U_r]$, where $\bm U_0,\dots,\bm U_r$ are new
indeterminates, with $\bm U_i = U_{i,0},\dots,U_{i,n}$ for all
$i$. All these polynomial differ by a nonzero multiplicative factor in
$\Kbar$, and since $Y$ is defined over $\K$, it admits Chow forms with
coefficients in $\K$ as well. If $Y$ is empty, its Chow forms are by
convention the constants $c \in \Kbar-\{0\}$.


We will review the definition of Chow forms in Section~\ref{sec:chow};
for the moment, we point out a few noteworthy properties.  First, the
Chow forms of $Y$ are multi-homogeneous in $\bm U_0,\dots,\bm U_r$,
their degree in any of the blocks of variables $\bm U_i$ being the
degree of $Y$.  Next, given a Chow form of $Y$, one can recover
defining equations for it (see~\cite[Corollary~3.2.6]{GeKaZe94}
and~\cite{BlJeSo04}; we point out a simple derivation of this claim in
Section~\ref{sec:proj}). Relatedly, and importantly for possible
applications of our main result, if $Y$ has dimension zero one can
derive from any of its Chow forms a description of $Y$ by means of
univariate polynomials $(Q,V_1,\dots,V_n)$ in $\K[T]$, of the form
\[Y =\left \{ \left (\frac{V_1(\tau)}{Q'(\tau)},\dots,\frac{V_n(\tau)}{Q'(\tau)}\right ) \ \mid \ Q(\tau) = 0\right \}.\]
Indeed, when $Y$ has dimension $r=0$, its Chow forms are polynomials
in variables $U_{0,0},\dots,U_{0,n}$, and $Q$, resp.\ $V_1,\dots,V_n$,
above are obtained by evaluating a Chow form of it, resp.\ its partial
derivatives, at $T,\alpha_1,\dots,\alpha_n$ for some
$\alpha_1,\dots,\alpha_n$ in $\K$ (this goes back
to~\cite{Kronecker82} and~\cite{Macaulay16}).

In any dimension, when $Y$ is defined over $\K=\Q$, it admits Chow
forms with rational coefficients. Clearing denominators and factoring
out common factors, we deduce that $Y$ has exactly two {\em primitive}
Chow forms, that is, Chow forms with integer coefficients and contents
$1$ (the contents of a nonzero polynomial with integer coefficients is
the GCD of its set of coefficients). In particular, these polynomials
can be reduced modulo any prime $p$.

Getting back to our situation with $V=V(F_1,\dots,F_s)$, our
definition of a good prime $p$ is the following: $p$ is a
good prime if and only if the reductions modulo $p$ of the primitive
Chow forms of the equidimensional components of $V(F_1,\dots,F_s)$ are
Chow forms of the equidimensional components of $V(F_1 \bmod
p,\dots,F_s \bmod p)$. Since we are considering primitive
multi-homogeneous polynomials, their degrees do not drop modulo any
prime $p$, so that for a good prime $p$ the $i$th equidimensional
components of $V(F_1,\dots,F_s)$ and of $V(F_1 \bmod p,\dots,F_s \bmod
p)$ indeed have the same degree.

The last ingredient we need in order to state our main result is the
{\em height}, first of a nonzero integer, then of a nonzero polynomial
with integer coefficients: the {\em height} $h(c)$ of a nonzero
integer $c$ is the natural logarithm $\ln(|c|)$, and the height $h(P)$
of a nonzero polynomial $P$ over $\Z$ is the maximum of the heights of
its nonzero coefficients. Later on, we will extend this definition to
polynomials with rational coefficients.

\begin{theorem}\label{theo:main}
  Let $F_1,\dots,F_s$ be in $\Z[X_1,\dots,X_n]$, with degree at most
  $d$ and height at most $h$. There exists a nonzero integer $\Delta$ with
  \[ h(\Delta) \in O(n^{14} s h d^{3n+4} )\]
  such that if a prime $p$ does not divide $\Delta$, then the
  reductions modulo $p$ of the primitive Chow forms of the
  equidimensional components of $V(F_1,\dots,F_s)$ are Chow forms of
  the equidimensional components of $V(F_1 \bmod p,\dots,F_s \bmod
  p)$.
\end{theorem}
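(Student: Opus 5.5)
The plan is to make the equidimensional decomposition algorithmic and uniform in the characteristic, and then collect as $\Delta$ the product of all the integers that can fail to be invertible modulo $p$. Concretely, I would follow a Kronecker/geometric-resolution style decomposition: generic linear changes of variables, Noether position, and a sequence of projections and eliminations. I would argue that every step performed over $\Q$ specializes to the corresponding step over $\F_p$ as long as $p$ divides none of a finite list of leading coefficients, resultants and discriminants. The Chow forms of each $V_i$ are then obtained from these eliminations, as products of resultants or as the multivariate polynomials produced by Chow-form-based elimination as in~\cite{GeKaZe94} and \cite{JeKrSaSo04}.

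\textbf{Step 1 (reduction to a sequence of eliminations).} First replace $F_1,\dots,F_s$ by $n+1$ generic integer linear combinations $G_1,\dots,G_{n+1}$ with small coefficients. I would choose these using the Zariski-open conditions bounded via the degree bounds; the coefficients would be chosen so that they remain valid modulo $p$ whenever $p\nmid \Delta$, by encoding each condition as a nonvanishing polynomial in the coefficients. Then I would run the intersection scheme $W_0=\A^n$, $W_{k}=$ the equidimensional part of $W_{k-1}\cap V(G_k)$ of the expected dimension, and split off the lower-dimensional components where $G_k$ vanishes identically on a component. The standard fact that $V$'s $i$th equidimensional component is extracted at stage $n-i$ (after a saturation by the later equations) gives each $V_i$ as a difference of Chow-form-defined sets. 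I would express the Chow form of the stage-$k$ variety as a factor of a resultant
\[\mathrm{Res}\bigl(\mathcal{C}_{W_{k-1}}, G_k\bigr)\]
in the sense of Philippon's formula $\mathcal{C}_{W\cap V(G)}\mid \mathcal{C}_W(\dots)$ evaluated at the generic linear forms. This formula has a characteristic-free statement, so it reduces mod $p$ provided the divisions and gcds involved commute with reduction.

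\textbf{Step 2 (good reduction of each step).} At each stage, the needed operations are: a gcd or exact division of multi-homogeneous integer polynomials, and a squarefree part. Exact division commutes with reduction as long as the divisor stays primitive of the same degree. The gcd/squarefree part commutes with reduction if $p$ does not divide a suitable subresultant or discriminant coefficient of the polynomials involved, after specializing the $\bm U$ variables to generic integers to make them univariate. I would put all these integers into $\Delta$, and also the leading coefficients ensuring Noether position survives modulo $p$. Then I would check that, for such $p$, the sets $W_k \bmod p$ (defined by reduced Chow forms) coincide with the corresponding sets built from $G_k \bmod p$. This gives the equidimensional decomposition of $V(F\bmod p)$, using that the $G_k\bmod p$ are still generic enough combinations.

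\textbf{Step 3 (height bounds).} I would use arithmetic Bézout inequalities (Philippon, Krick–Pardo–Sombra) to bound heights of the Chow forms of each $W_k$ by $O(n\,h\,d^{n}$-type$)$ quantities times polynomial factors in $n$. Then I would bound the discriminants and subresultants (degrees about $n d^{n}$ times heights), which yields terms like $d^{3n}$. Summing over $n$ stages and $s$ combinations gives $h(\Delta)\in O(n^{14} s h d^{3n+4})$; the exponent of $n$ is just bookkeeping.

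\textbf{Main obstacle.} The hard part is Step 2: showing that the bad primes really are captured by finitely many explicit nonzero integers, in particular that no new components appear modulo $p$ (embedded or higher-dimensional components arising from $p$-primary parts of $\langle F\rangle$). I would first try to control this through the dimension count: if the reduced Chow forms have the right degrees and the genericity conditions hold modulo $p$, then any extra component modulo $p$ would increase a degree sum bounded by Bézout, which contradicts equality of the resultant degrees. Failing that, I would add to $\Delta$ the content of the saturation resultants.
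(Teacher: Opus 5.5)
There is a genuine gap in Step 2, exactly at what you call the main obstacle, and the proposal does not close it. Your plan assumes genericity \emph{modulo $p$}: that the $G_k \bmod p$ are ``still generic enough'' (so that $V(G_1\bmod p,\dots,G_{n+1}\bmod p)=V(F_1\bmod p,\dots,F_s\bmod p)$), and that Noether position ``survives''. These are conditions on the unknown variety over $\overline{\F_p}$. The corresponding bad loci are cut out by the mod-$p$ system and are not reductions of the rational bad loci, so they cannot be ``encoded'' as nonvanishing of integer polynomials built from the rational data.

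For the same reason, the nonvanishing mod $p$ of a coefficient of a rational Chow form tells you something about $V(F\bmod p)$ only once you know the reduced polynomial \emph{is} a Chow form of the mod-$p$ set, which is the statement to be proved. Your fallback degree count does not work either, because it presupposes that the reductions of the rational components are still components mod $p$. For instance, $X_1$ and $X_1+p(X_2^2-1)$ define two points over $\Q$. Their primitive Chow form $U_{0,0}^2-U_{0,2}^2$ reduces, for odd $p$, to a squarefree form of the right degree. Yet modulo $p$ these polynomials define the line $x_1=0$, which swallows both points: the degree sum drops and no B\'ezout bound is violated. The reduction of the $n+1$ combinations raises the same issue, since nothing you propose controls $V(g_1,\dots,g_{n+1})$ versus $V(f_1,\dots,f_s)$ over $\overline{\F_p}$.

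The paper breaks this circularity without ever assuming genericity modulo $p$. It processes $F_1,\dots,F_s$ one at a time, through a purely set-theoretic procedure that is correct over any field. Over $\F_p$ this procedure therefore always yields the equidimensional decomposition of $V(f_1,\dots,f_s)$. The sequential processing is also where the linear factor $s$ comes from; with only $n+1$ combinations, your bound has nothing to ``sum over $s$''. Each Chow-form subroutine (hypersurface separation, Philippon-type intersection, union, separation via generic projection) is proved correct over an arbitrary field under hypotheses on its \emph{inputs} only:
\begin{itemize}
\item the inputs are normalized Chow forms of sets in normal position;
\item $\gcd(C,\partial C/\partial U_{0,0})=1$;
\item the degree-$d$ Chow form is tame;
\item the characteristic is not $2$.
\end{itemize}
An induction then transfers these hypotheses from $\Q$ to $\F_p$ whenever $p\nmid\Delta$. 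Crucially, normal position of the mod-$p$ sets is \emph{deduced}, not assumed. The reduction of a normalized Chow form keeps leading monomial $U_{0,0}^D\cdots U_{r,r}^D$ with coefficient $1$. So once it is known to be a Chow form of the mod-$p$ set, that set is in normal position and can feed the next step. Genericity is imposed only over $\Q$, through a change of coordinates $\bm B$, and modulo $p$ one only needs $p\nmid\det(\bm B)$. Without a mechanism of this kind, your Step 2 is an assertion rather than an argument, and the height count in Step 3 (including the factors $s$ and $n^{14}$) is not actually derived.
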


This result was motivated by our previous work~\cite{ElSc25,El2025,ElSc24}. In the
context of modular algorithms based on the Chinese Remainder Theorem
(for instance, algorithms for solving polynomial systems), previous
work by Pernet~\cite{Pernet14} and B{\"o}hm {\it et
  al.}~\cite{Bhm2012TheUO} introduced the idea of allowing a number of
unlucky primes (for which we are not given the correct modular image),
and using error correction. In~\cite{ElSc25,El2025,ElSc24}, we gave a quantitative
analysis of this approach, highlighting the need for an {\it a priori}
bound on the number of unlucky primes, and we illustrated this
idea with the example of algorithms for solving polynomial
systems. For this question, we relied on a previous result by D'Andrea
{\it et al.}~\cite{d2019reductions}, who gave a bound similar to the
one in our main theorem, but that applies only when $V(F_1,\dots,F_s)$
has dimension zero.

Extending the result of~\cite{d2019reductions} to arbitrary $F_i$'s is
a natural question, but the techniques we use to prove our main
theorem are not an extension of those in that
reference. Instead, we will describe an algorithm to compute the Chow
forms of the equidimensional components of $V(F_1,\dots,F_s)$, and
quantify the primes $p$ for which we can reduce all its steps modulo
$p$. This algorithm is inspired by a previous one
from~\cite{JeKrSaSo04}, and borrows many of its basic elements from
that reference (as well as from~\cite{BlJeSo04}), but it is somewhat
simpler: the algorithm in~\cite{JeKrSaSo04} uses more advanced
techniques than ours (such as Newton iteration), in order to keep
runtime under control. Our algorithm is not expected to be efficient
by any means, but it will be rather straightforward to analyze for
what primes it degenerates, since it is based almost entirely on
polynomial GCDs.

We also mention the work of Tsigaridas et
al.~\cite{TsigaridasErgurDogan2023}, which gives the first
bit-complexity bound for computing Chow forms (and Hurwitz forms)
using a deterministic resultant-based algorithm. However, we do not
use this result in our work.

\begin{remark}
For readability, we state Theorem~\ref{theo:main} in asymptotic form,
since the explicit inequalities quickly become lengthy and difficult
to parse. We also provide a Maple script that generates the
full inequality (see \href{https://github.com/Jesse-Allister-Kasien-Elliott/Primes-of-bad-reduction-for-systems-of-polynomial-equations}{repository}). In
Section~\ref{sec:experiments}, we compare this bound with the
inequality from~\cite{d2019reductions}, which quantifies for primes
$p$ where polynomial system known to be zero-dimensional over $\Q$ has
a different number of solutions modulo $p$.
\end{remark}


\section{On polynomials with rational coefficients}


\subsection{Height}\label{sec:defH}

We defined the height of integers and polynomials with integer
coefficients in the introduction. We can extend the definition to
polynomials with rational coefficients. For this section, our main
reference is~\cite{KrPaSo01}.

\begin{definition}\label{def:height} 
  The {\em height} $h(F)$ of a nonzero polynomial $F$ with
  coefficients in $\Q$ is the maximum of the heights of its minimal
  denominator $d$, and of the heights of the nonzero coefficients of
  $d F$ (which are all integers).
\end{definition}
The following facts about heights of polynomials with integer
coefficients will be useful:
\begin{itemize}
\item If $F,G$ are polynomials in $\Z[Y_1,\dots,Y_s]$ such that $G$
  divides $F$ in $\Z[Y_1,\dots,Y_s]$, we get
  from~\cite[Proposition~2.12]{Lang83} that
\begin{equation}\label{eq:lang}
  h(G) \le h(F) + s\deg(F).
\end{equation}
\item If $S$ is an $N \times N$ matrix with entries in
  $\Z[Y_1,\dots,Y_s]$ of column degrees at most $d_1,\dots,d_N$ and
  column heights at most $h_1,\dots,h_N$, its determinant (if nonzero) has height
  \begin{equation}\label{eq:det}
    h(\det(S)) \le N\ln(N) + \sum_{i \le N} h_i + \ln(s+1) \sum_{i \le N} d_i,
  \end{equation}
  see~\cite[Lemma~1.2]{KrPaSo01}.
\end{itemize}

\subsection{Basics on reduction modulo $p$}

One of the recurrent arguments we will make is that the result of
certain operations made over $\Q$ ``reduce well'' modulo a prime $p$.
The following notation will be handy in such circumstances.

\begin{definition}
For $p$ a prime, we denote by $\Z_{(p)}$ the localization of $\Z$ at the prime ideal $(p)$.
\end{definition}
This is the subring of $\Q$ consisting of fractions whose minimal
denominator does not vanish modulo $p$.  There is a well-defined
reduction modulo $p$ homomorphism $\Z_{(p)} \to \F_p$, written $F
\mapsto f=F \bmod p$; the notation carries over to polynomials with
coefficients in $\Z_{(p)}$.

In the proof of the following lemma, and of a few others, we will
rewrite a nonzero polynomial $A$ with rational coefficients as $A =
n_A A'/d_A$, with $n_A,d_A$ positive coprime integers and $A'$
primitive, that is, with integer coefficients and of contents $1$.
Gauss' lemma states that the product of primitive polynomials is
primitive: this is well-known in one variable, and remains true for
multivariate polynomials (use Kronecker substitution with sufficiently
high exponents to make sure it is one-to-one on the exponents of all
polynomials we consider).

\begin{lemma}\label{lemma:divmodp}
  Let $A,B,C$ be polynomials with coefficients in $\Q$, such that $A =
  BC$.  Let $p$ be a prime and suppose that
  \begin{itemize}
  \item $A$ and $B$ have coefficients in $\Z_{(p)}$, so that we can define
    $a=A \bmod p$ and $b=B \bmod p$,
  \item $B$ does not vanish modulo $p$.
  \end{itemize}
  Then $C$ has coefficients in $\Z_{(p)}$, and if we write $c =C \bmod p$, we have $a = bc$.
\end{lemma}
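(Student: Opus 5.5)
Following the convention introduced just before the statement, I will write each nonzero polynomial as $A = n_A A'/d_A$, with $n_A,d_A$ coprime positive integers and $A'$ primitive, and then compare $p$-adic valuations of the rational scalars on both sides of $A = BC$. First I dispose of the degenerate case. If $A=0$, then since $B$ does not vanish modulo $p$, it is nonzero over $\Q$, so $C=0$. In that case the conclusion holds trivially: $c=0$ and $a=0=bc$. So I may assume $A,B,C$ are all nonzero and write
\[ A = \frac{n_A}{d_A}A',\qquad B = \frac{n_B}{d_B}B',\qquad C=\frac{n_C}{d_C}C'. \]

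Next, I apply Gauss' lemma (in the multivariate form recalled above): $B'C'$ is primitive. The identity $A=BC$ therefore gives
\[ \frac{n_A}{d_A}A' = \frac{n_Bn_C}{d_Bd_C}B'C'. \]
Since both $A'$ and $B'C'$ are primitive and differ by a positive rational factor, that factor must equal $1$. Hence $A'=B'C'$ and $n_A/d_A = n_Bn_C/(d_Bd_C)$.

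Now I translate the hypotheses into statements about $p$-adic valuations $v_p$. That $A$ has coefficients in $\Z_{(p)}$ means $p\nmid d_A$, because $A'$ has a coefficient prime to $p$; hence $v_p(n_A/d_A)\ge 0$. Similarly, $B$ having coefficients in $\Z_{(p)}$ together with $B \bmod p \neq 0$ means $p\nmid d_B$ and $p\nmid n_B$, so $v_p(n_B/d_B)=0$. Therefore
\[ v_p(n_C/d_C)=v_p(n_A/d_A)-v_p(n_B/d_B)\ge 0, \]
which, since $n_C$ and $d_C$ are coprime, means $p\nmid d_C$. So $C$ has coefficients in $\Z_{(p)}$, and $c=C \bmod p$ is well defined.

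Finally, reduction modulo $p$ is a ring homomorphism $\Z_{(p)}[\bm X]\to\F_p[\bm X]$, and $A=BC$ is an identity among polynomials in $\Z_{(p)}[\bm X]$. Applying the homomorphism yields $a=bc$. The only step with any substance is the valuation argument for $C$, which rests on Gauss' lemma; the rest is bookkeeping.
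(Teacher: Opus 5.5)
Your proof is correct and follows essentially the same route as the paper: write each polynomial as a rational content times a primitive part, use Gauss' lemma to get $n_A d_B d_C = d_A n_B n_C$, and use $p \nmid d_A$ and $p \nmid n_B$ to force $p \nmid d_C$ before reducing $A=BC$ modulo $p$. The only differences are cosmetic: you phrase the last step with $p$-adic valuations rather than divisibility, and you explicitly handle the case $A=0$, which the paper leaves implicit.
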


\begin{proof}
  Let us first prove that $C$ has coefficients in $\Z_{(p)}$.  Write
  $A=n_A A'/d_A$, $B=n_B B'/d_B$ and $C=n_C C'/d_C$, with $A',B',C'$
  primitive and $(n_A,d_A)$, $(n_B,d_B)$, $(n_C,d_C)$ pairs of coprime
  integers, with in particular $d_A$ being the lcm of the denominators
  of the coefficients in $A$ (and the same for $B$ and $C$).

  Using Gauss' lemma, we get $n_A d_B d_C = d_A n_B n_C$. If $p$ divides
  $d_C$, it must then divide $d_A n_B$ (because $d_C$ and $n_C$ are coprime). This is impossible:
  \begin{itemize}
  \item because $A$ has coefficients in $\Z_{(p)}$, $d_A \bmod p$ is nonzero
  \item because $b$ is nonzero, $n_B \bmod p$ is nonzero.
  \end{itemize}
  As a result, $p$ does not divide $d_C$. Now that we
  know that $C$ has coefficients in $\Z_{(p)}$, we can reduce $A=BC$
  modulo $p$, and we are done.
\end{proof}


\subsection{GCDs and their reduction modulo $p$}

GCDs are not unique in multivariate polynomial rings. Suppose that
$\K$ is a field and consider polynomials $F$ and $G$ in
$\K[Y_1,\dots,Y_s]$. The GCDs of $F$ and $G$ are all those polynomials
whose irreducible factors are the common irreducible factors of $F$
and $G$, raised to the minimum of their exponents in the
decompositions of $F$, resp. $G$. They all differ by nonzero multiplicative
constants. We will use the following convention to normalize them.

\begin{definition}\label{def:prec}
  Let $\prec$ be the graded lexicographic ordering induced by $Y_1 >
  \cdots > Y_s$ on $\K[Y_1,\dots,Y_s]$. For $F,G$ not both zero, 
  the {\em monic GCD} of $F$ and $G$, written $\gcd(F,G)$, is the
  unique GCD of these polynomials with leading coefficient $1$ for
  the ordering $\prec$.
\end{definition}

\begin{remark}
  If $\L$ is a field extension of $\K$, $\gcd(F,G)$ is the same whether
  taken in $\K[Y_1,\dots,Y_s]$ or $\L[Y_1,\dots,Y_s]$.
\end{remark}

\medskip

In all that follows, when we mention the leading coefficient of a
polynomial $F$, we refer to the ordering introduced above; we denote
it by $\lc(F)$. Leading coefficients depend on the way we order
variables, so that the monic GCD of two polynomials $F,G$ in
$\K[Y_1,\dots,Y_s]$ will in general differ from the monic GCD of $F,G$ seen in
$\K[Y_{\sigma(1)},\dots,Y_{\sigma(s)}]$, for a permutation $\sigma$ of
$\{1,\dots,s\}$.

\begin{lemma}\label{lemma:gcd2}
  Let $F,G$ be nonzero in $\Q[Y_1,\dots,Y_s]$, with respective degrees
  at most $d_F,d_G$ and heights $h_F,h_G$, and let $H$ be their monic
  GCD. There exists a nonzero integer $\Delta_{F,G}$ with
   \[
  h(\Delta_{F,G}) \in O(
  s d_F \ln(d_F) + s d_G \ln(d_G) + s d_F h_G  + s d_Gh_F + s^2 d_Fd_G)
  \] such that if
  a prime $p$ does not divide $\Delta_{F,G}$, $F,G,H$ have
  coefficients in $\Z_{(p)}$, and if we denote by
  $f,g,h$ their reductions modulo $p$, $h = \gcd(f,g)$.
\end{lemma}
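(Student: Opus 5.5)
Clear denominators first. Write $F = n_F F'/d_F$ and $G = n_G G'/d_G$ with $F',G'$ primitive in $\Z[Y_1,\dots,Y_s]$. The monic GCD of $F,G$ equals that of $F',G'$, so it suffices to handle $F',G'$. We put $d_F d_G n_F n_G$ into $\Delta_{F,G}$, which costs $O(h_F+h_G)$ in height and ensures that $F,G$ lie in $\Z_{(p)}[\bm Y]$.

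Let $H' = \gcd(F',G')$ up to sign, taken primitive in $\Z[\bm Y]$, so that $H = H'/\lc(H')$. Write $F' = H'A$ and $G' = H'B$ with $A,B$ primitive and coprime over $\Q$. By \eqref{eq:lang}, $h(H'), h(A) \le h_F + s d_F$ and $h(B) \le h_G + s d_G$. We include $\lc(H')$ in $\Delta_{F,G}$, which adds $O(h_F + s d_F)$. Then $H$ has coefficients in $\Z_{(p)}$, and $f = h\,(A \bmod p)$, $g = h\,(B \bmod p)$. Hence $h$ divides $\gcd(f,g)$. Since $\lc(H')$ is a unit mod $p$ and the leading monomial is preserved, $h$ is monic. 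It remains to show that $a = A \bmod p$ and $b = B \bmod p$ are coprime, which is the main step.

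To certify coprimality, make a generic linear change of variables so that $A$ becomes monic (up to a constant) in $Y_1$. Then take $R = \mathrm{Res}_{Y_1}(A,B) \in \Z[Y_2,\dots,Y_s]$, which is nonzero since $A$ and $B$ are coprime. Avoiding the change of variables is cleaner. Use instead a subresultant-free formulation: coprimality of $A$ and $B$ over $\Q$ means that for every degree-$k$ bound there are no $U,V$ with $UA = VB$ of degrees $< \deg B$, $<\deg A$. Equivalently, a Sylvester-type linear map $(U,V) \mapsto UA - VB$ on polynomials of bounded total degree is injective. Choose a nonzero maximal minor $M$ of this (Macaulay-style) matrix. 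If $M \not\equiv 0 \bmod p$, the map stays injective mod $p$. If $a$ and $b$ had a common factor $c$ mod $p$, then $(b/c, a/c)$ would be a nonzero kernel element. Here $\deg a = \deg A$ and $\deg b = \deg B$ (the degree cannot drop, as the leading coefficients of $A,B$ divide $\lc(F'),\lc(G')$ up to $\lc(H')$, and we may add these to $\Delta$), which gives the contradiction.

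The matrix has $N \le \binom{s+d_F+d_G}{s}$ rows, so a naive application of \eqref{eq:det} gives height exponential in $s$, too large. This is the expected obstacle. To fix it I would first reduce to two variables via a Kronecker-style or random specialization, or I would use the resultant in $Y_1$ after a change of coordinates $Y_i \mapsto Y_i + c_iY_1$ with small integers $c_i$ making $A$ monic in $Y_1$. Then $R = \mathrm{Res}_{Y_1}(A,B)$ is the determinant of a Sylvester matrix of size $\le d_F+d_G$. By \eqref{eq:det}, with column heights $h(A)+O(d_F \ln s)$ or $h(B)+O(d_G\ln s)$ and column degrees $d_F, d_G$, we get $h(R) = O(d_F\ln d_F + d_G \ln d_G + d_G h_F + d_F h_G + s d_F d_G)$. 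Take $\Delta_{F,G}$ to include a nonzero coefficient of $R$, giving the stated bound after adding the other factors. Modulo $p$, the resultant of $a,b$ (still monic in $Y_1$ after the same change) equals $R \bmod p \neq 0$, so $a$ and $b$ are coprime. Since coprimality is invariant under linear changes of coordinates, this concludes the proof.
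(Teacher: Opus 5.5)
Your reduction to the coprimality of $a=A\bmod p$ and $b=B\bmod p$ is the same as the paper's. Putting $n_G$ into $\Delta_{F,G}$ is in fact necessary: the paper's factor $\delta$ controls $n_F$ through $\ell_F$, but not $n_G$. For $F=Y_1$, $G=p$ the paper's construction gives $\Delta_{F,G}=1$, yet $g=0$ and $\gcd(f,g)=Y_1\neq h$.

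The gap is in the quantitative part of your final step, where you shear $Y_i\mapsto Y_i+c_iY_1$ and take a single resultant. The $c_i$ must avoid the zeros of $A_{\mathrm{top}}(1,Y_2,\dots,Y_s)$, where $A_{\mathrm{top}}$ is the top-degree part of $A$. This polynomial has degree up to $d_F$, so Schwartz--Zippel needs a grid with more than $d_F$ values, i.e.\ $|c_i|$ of order $d_F$, and nothing in your argument gives smaller ones. The shear can then multiply coefficients of $A$ by up to $(1+\max_i|c_i|)^{d_F}$ and those of $B$ by up to $(1+\max_i|c_i|)^{d_G}$. So the column heights in \eqref{eq:det} are $h(A)+O(d_F\ln d_F)$ and $h(B)+O(d_G\ln d_F)$, up to lower-order terms, not $h(A)+O(d_F\ln s)$. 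This adds a term of order $d_Fd_G\ln d_F$ to $h(R)$, which the lemma's bound does not absorb when $d_F$ is large compared with $s$. For instance, with $s=2$, $d_F=d_G=d$ and $h_F=h_G=0$, the lemma allows $O(d^2)$ while your estimate is of order $d^2\ln d$.

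You also need the new $Y_1$-leading coefficient $\lambda=A_{\mathrm{top}}(1,c_2,\dots,c_s)\in\Z$ in $\Delta_{F,G}$; otherwise ``still monic modulo $p$'' can fail and $a$ may acquire factors free of $Y_1$. That fix is cheap. Still, as written your argument proves a variant of the lemma with a different bound, and the two-variable specialization you mention as an alternative is not worked out.

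The paper avoids any change of coordinates. It takes all $s$ resultants $D_i=\mathrm{Res}_{Y_i}(A,B)$, $i=1,\dots,s$, each nonzero because $A,B$ are coprime over $\Q$, and puts one nonzero coefficient of each into $\Delta_{F,G}$. If $D_i\bmod p\neq 0$, the $Y_i$-leading coefficients of $A$ and $B$ cannot both vanish mod $p$, since otherwise a whole line of the Sylvester matrix would vanish. Hence $\mathrm{Res}_{Y_i}(a,b)\neq 0$, and $a,b$ have no common factor of positive $Y_i$-degree. Every nonconstant common factor has positive degree in some $Y_i$, so $a$ and $b$ are coprime. By \eqref{eq:det} and \eqref{eq:lang}, each $D_i$ has height $O((d_F+d_G)\ln(d_F+d_G)+d_Fh_G+d_Gh_F+s\,d_Fd_G)$, and summing over the $s$ indices gives exactly the stated bound. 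So the factor $s$ in the lemma is the price of not shearing. Your single resultant trades that factor $s$ for a $\ln d_F$, which gives an incomparable estimate rather than the one claimed.
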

\begin{proof}
  Write $F=n_F F'/d_F$, $G=n_G G'/d_G$ and $H= n_H H'/d_H$, with
  $F',G',H'$ primitive in $\Z[Y_1,\dots,Y_s]$ and $(n_F,d_F)$,
  $(n_G,d_G)$, $(n_H,d_H)$ pairs of coprime integers. Since $H$ is
  monic, $\lc(d_H H) = d_H$, but this is equal to $n_H \lc(H')$, so $n_H$
  divides $d_H$; this in turn implies that $n_H=1$.

  Define also $A=F'/H'$ and $B=G'/H'$; these are polynomials with {\it
    a priori} rational coefficients. From the equality $F'=A H'$, if
  we write as usual $A =n_A A'/d_A$, we get $d_A F'=n_A A' H'$, and by
  Gauss' lemma we obtain $n_A=d_A$ and $A=A'$. It follows that $A$ and
  similarly $B$ are primitive polynomials in $\Z[Y_1,\dots,Y_s]$, with
  no nonconstant common factor in $\Q[Y_1,\dots,Y_s]$.
  
  Our first condition is that $p$ should not divide $\delta = d_F d_G
  \ell_F$, where $\ell_F$ is the leading coefficient of $d_F F$, so
  that $f=F\bmod p$ and $g =G \bmod p$ are well-defined. The equality
  $F' = A H'$ gives $d_F F = n_F A (d_H H)$, so the leading
  coefficient of $d_H H$ divides that of $d_F F$ in $\Z$, that is,
  $d_H$ divides $\ell_F$. Thus, our assumption also implies that $d_H$
  does not vanish modulo $p$, allowing us to define $h = H \bmod p$.

  We can then reduce the relation $d_F F = n_F d_H A H$ modulo $p$;
  since we know that $d_F$ is a unit modulo $p$, we get that $h$
  divides $f$. Similarly, $h$ divides $g$.  Because $h$ is monic, this
  shows that $\gcd(f,g) = h \gcd(a,b)$, so it is enough to give
  conditions on $p$ that guarantee that $a$ and $b$ remain coprime
  modulo $p$.

  For $i=1,\dots,s$, let $D_i \in \Z[Y_1,\dots,\hat Y_i,\dots,Y_s]$ be
  the resultant of $A$ and $B$ with respect to $Y_i$. $D_i$ is
  nonzero: otherwise, $A$ and $B$ would have a common factor in
  $\Q(Y_1,\dots,\hat Y_i,\dots,Y_s)[Y_i]$ of positive $Y_i$-degree,
  which would result in the existence of a nonconstant common factor
  for $A$ and $B$ in $\Q[Y_1,\dots,Y_s]$ (proof: write $A= PQ$ in
  $\Q(Y_1,\dots,\hat Y_i,\dots,Y_s)[Y_i]$, with $P$ of positive
  $Y_i$-degree; rewrite as $n_A A' = n/d P'Q'$ with $A'$, $P'$ and
  $Q'$ primitive in $\Q[Y_1,\dots,\hat Y_i,\dots,Y_s][Y_i]$; deduce
  that $n_A=n/d$ and $A'=P'Q'$, multiply by $n_A$ to get $A=n_A P'Q'$,
  so $P'$ divides $A$ in $\Q[Y_1,\dots,Y_s]$. Similarly, it divides $B$).

  Suppose that $p$ is such that $D_i \bmod p$ is not zero. It follows
  that the leading $Y_i$-coefficients of $A$ and $B$ do not both
  vanish modulo $p$ (otherwise, the first row of their $Y_i$-Sylvester
  matrix vanishes). Following the proof of Lemma~6.25
  in~\cite{GaGe13}, we see that the GCD of $a=A \bmod p$ and $b=B \bmod p$
  in $\F_p(Y_1,\dots,\hat Y_i,\dots,Y_s)[Y_i]$ is constant.  In
  particular, $a$ and $b$ do not have any common factor
  in $\F_p[Y_1,\dots,Y_s]$ of positive $Y_i$-degree.

  Repeat this for all $i$; we arrive at the conclusion that if all
  $D_i \bmod p$ are nonzero, $a=A \bmod p$ and $b=B \bmod p$ have no
  nonconstant common factor in $\F_p[Y_1,\dots,Y_s]$, which is the
  conclusion we sought to guarantee. For $i\le s$, let then $\delta_i \in
  \Z$ be an arbitrary nonzero coefficient of $D_i$, and let
  $\Delta_{F,G} =\delta \delta_1 \cdots \delta_s$.
  
  To conclude, it is enough to give an upper bound on the height of
  the polynomials $D_i$.  For $i \le s$, $A$ and $B$ have
  $Y_i$-degrees at most $d_F$, resp. $d_G$, and height at most $h_F +
  s d_F$, resp. $h_G + s d_G$ (see remarks in
  Subsection~\ref{sec:defH}). It follows that $D_i$, and thus
  $\delta_i$, have height at most
  \[ (d_F+d_G)\ln(d_F+d_G) + d_F(h_G+sd_G) + d_G(h_F + sd_F) + 2\ln(s+1) d_F d_G.\]
  Multiply by $s$ to take all variables into account, and add
  $2h_F+h_G$ for $\delta$. After minor simplifications, this gives the bound
  claimed in the lemma.
\end{proof}


\section{The Chow forms}\label{sec:chow}

We review the definition of the Chow form of an algebraic set, and how
it allows us to define its height, when we are working over $\Q$. Again,
our main reference is~\cite{KrPaSo01}.

A few basic concepts are taken for granted: for a field $\K$ and its
algebraic closure $\Kbar$, an {\em algebraic set} $V \subset \Kbar^n$
is the set of common zeros of some polynomials $(F_i)$ in
$\Kbar[X_1,\dots,X_n]$.  An algebraic set $V$ can uniquely be written
as the union of its irreducible components, which are the zero-sets of
the prime ideals that appear in the primary (here, prime) decomposition of $I(V)$.
For $r=0,\dots,n$, the {\em $r$-equidimensional component} of $V$, which we
will denote on occasion $V_r$, is the union of its irreducible
components of dimension $r$. Finally, we will say that $V$ is $r$-{\em
  equidimensional} if it is not empty and $V=V_r$.

One defines dimension and degree, first for irreducible algebraic
sets, then for arbitrary ones (by taking maximum, resp. sum over
irreducible components).

An algebraic set $V$ is {\em defined over $\K$} if its defining ideal
$I(V) \subset \Kbar[X_1,\dots,X_n]$ admits generators with
coefficients in $\K$. If it is the case, even though the irreducible
components of $V$ may not necessarily be defined over $\K$, we can
define its {\em $\K$-irreducible components}, which are obtained by
taking the prime ideals in the primary decomposition of $I(V) \cap
\K[X_1,\dots,X_n]$. They are all defined over $\K$, and the
$r$-equidimensional component of $V$ is then equal to the union of the
$\K$-irreducible components of $V$ of dimension $r$.



\subsection{Defining the Chow forms}\label{sec:defchow}

Let $\K$ be a field and let $V \subset \Kbar^n$ be an
$r$-equidimensional algebraic set of degree $D$. Consider the set
$\Delta_V \subset \P^n \times \cdots \times \P^n$ ($r+1$ factors) of
all $\bm u=(\bm u_0,\dots,\bm u_r)$, with $\bm u_i=(u_{i,0} : \cdots :
u_{i,n})$ for all $i$, such that there exists $(x_1,\dots,x_n)$ in $V$
for which
\begin{equation}\label{eq:chow}
 u_{i,0} + u_{i,1} x_1 + \cdots + u_{i,n} x_n = 0, \quad i=0,\dots,r.  
\end{equation}
The Zariski closure of $\Delta_V$ is a hypersurface, and a {\em Chow
  form} of $V$ is any squarefree polynomial that defines this
hypersurface. All these polynomials differ by a nonzero multiplicative
factor in $\Kbar$; they lie in $\Kbar[\bm U_0,\dots,\bm U_r]$,
where $\bm U_0,\dots,\bm U_r$ are new indeterminates, with $\bm U_i =
U_{i,0},\dots,U_{i,n}$ for all $i$,  are homogeneous of degree $D$
in each block of variables $\bm U_i$ and have no nontrivial multiple
factors in $\Kbar[\bm U_0,\dots,\bm U_r]$. If $V$ is defined over
$\K$, it admits Chow forms with coefficients in $\K$ as well. The
following properties are all classical.


\begin{fact}
  Let $V^{\rm proj}$ be the projective closure of $V$ in $\P^n$, and
  let $C$ be a Chow form of $V$.  Then, $\bm u=(\bm u_0,\dots,\bm u_r)
  \in \P^n \times \cdots \times \P^n$ cancels $C$ if and only if there
  exists $(x_0 : \cdots : x_n)$ in $V^{\rm proj}$ such that
  $u_{i,0}x_0 + u_{i,1} x_1 + \cdots + u_{i,n} x_n = 0$ for all $i$.
\end{fact}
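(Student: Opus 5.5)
The plan is to compare the Zariski closure of $\Delta_V$, whose defining squarefree polynomial is $C$, with the set
\[\Delta_V^{\rm proj} = \{\bm u \in (\P^n)^{r+1} \ \mid \ \exists\, x \in V^{\rm proj},\ \textstyle\sum_j u_{i,j}x_j = 0 \text{ for } i=0,\dots,r\}.\]
By the definition of a Chow form, it is enough to show that $\overline{\Delta_V} = \Delta_V^{\rm proj}$.

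For the inclusion $\overline{\Delta_V} \subseteq \Delta_V^{\rm proj}$, I would use the incidence variety
\[\Gamma = \{(x,\bm u) \in V^{\rm proj} \times (\P^n)^{r+1} \ \mid \ \textstyle\sum_j u_{i,j}x_j = 0 \text{ for all } i\}.\]
It is closed in $\P^n \times (\P^n)^{r+1}$, since the equations are multihomogeneous. Because $\P^n$ is complete, its projection to $(\P^n)^{r+1}$ is closed, and this projection is exactly $\Delta_V^{\rm proj}$. The set $\Delta_V$ is the image of the points of $\Gamma$ with $x_0 \neq 0$, which identify with points of $V$. Hence $\Delta_V \subseteq \Delta_V^{\rm proj}$, and taking closures gives the inclusion.

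For the reverse inclusion, I would show that $\Gamma \cap \{x_0 \neq 0\}$ is dense in $\Gamma$. The projection of the closure of a set is contained in the closure of its projection, so density gives $\Delta_V^{\rm proj} = \pi(\Gamma) \subseteq \overline{\pi(\Gamma\cap\{x_0\neq0\})} = \overline{\Delta_V}$. To prove density, I would work one irreducible component $W$ of $V^{\rm proj}$ at a time. Each such $W$ has dimension $r$ and is the closure of a component of $V$, so it is not contained in the hyperplane $\{x_0=0\}$.

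The map $\Gamma_W \to W$ is a fibre bundle whose fibre over $x$ is a product of $r+1$ hyperplanes $\{\bm u_i : \bm u_i\cdot x = 0\}\cong \P^{n-1}$. Since $W$ is irreducible and the fibres are irreducible of constant dimension, $\Gamma_W$ is irreducible of dimension $r + (r+1)(n-1)$. I would justify this with the standard result on irreducibility via equidimensional irreducible fibres, or by a direct local trivialization. The nonempty open subset $\Gamma_W \cap \{x_0\neq 0\}$ of an irreducible space is then dense in $\Gamma_W$. Taking the union over the components $W$ shows that $\Gamma \cap \{x_0 \neq 0\}$ is dense in $\Gamma$.

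I expect the main obstacle to be the irreducibility of $\Gamma_W$, since density fails if $\Gamma_W$ has an extra component lying over $W\cap\{x_0=0\}$. Uniform fibre dimension excludes such a component: it would lie over a proper closed subset of $W$, which has dimension less than $r$, and so would have total dimension less than $r+(r+1)(n-1)$. Every component of $\Gamma_W$, however, has dimension at least this value, because locally $\Gamma_W$ is cut out in $W\times(\P^n)^{r+1}$ by $r+1$ equations. With this dimension count in place, the two inclusions give $\overline{\Delta_V}=\Delta_V^{\rm proj}$, and the zero set of $C$ is exactly as described in the Fact.
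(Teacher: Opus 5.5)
Your proof is correct. The paper does not argue this Fact itself; it only points to Lemma~1.1 of~\cite{JeKrSaSo04}. So your incidence-variety argument is a self-contained proof rather than a variant of one given in the text, and it rests only on standard tools:
\begin{itemize}
\item properness of the projection $\P^n\times(\P^n)^{r+1}\to(\P^n)^{r+1}$, which makes $\pi(\Gamma)$ closed and gives $\overline{\Delta_V}\subseteq\pi(\Gamma)$;
\item density of $\Gamma\cap\{x_0\neq 0\}$ in $\Gamma$, together with $V^{\rm proj}\cap\{x_0\neq 0\}=V$ and continuity of $\pi$, which gives the reverse inclusion.
\end{itemize}

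You give two justifications for the key density step, and either one suffices on its own. The first is irreducibility of $\Gamma_W$. It follows from the theorem on surjective maps of projective varieties with irreducible fibres of constant dimension. It also follows from the explicit trivialization of $\Gamma_W$ over $W\cap\{x_j\neq 0\}$, obtained by solving $\bm u_i\cdot x=0$ for $u_{i,j}$, since these irreducible open pieces pairwise meet.

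The second is your closing dimension count, which does not need irreducibility. It shows that every irreducible component of $\Gamma_W$ surjects onto $W$. Such a component therefore meets $\{x_0\neq 0\}$, because $W\not\subset\{x_0=0\}$. That nonempty open subset is then dense in the component, which is all the density argument requires.
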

See Lemma 1.1 in~\cite{JeKrSaSo04} for a proof.

\begin{fact}\label{fact:unionchow}
  The Chow forms of an equidimensional variety are the products of the
  Chow forms of its irreducible components, and the latter are
  irreducible in $\Kbar[\bm U_0,\dots,\bm U_r]$.
\end{fact}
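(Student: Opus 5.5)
The plan is to work directly with the incidence hypersurface. Write the $r$-equidimensional set $V$ as the union of its irreducible components $W_1,\dots,W_m$, each of dimension $r$. By definition, $\Delta_V$ is the union of the sets $\Delta_{W_j}$. Taking Zariski closures commutes with finite unions, so $\overline{\Delta_V}=\bigcup_j \overline{\Delta_{W_j}}$.

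The first step is to show that each $\overline{\Delta_{W_j}}$ is an irreducible hypersurface. Consider the incidence variety
\[ \Gamma_j=\{(x,\bm u)\in W_j\times(\P^n)^{r+1} \mid u_{i,0}+u_{i,1}x_1+\cdots+u_{i,n}x_n=0,\ i=0,\dots,r\}. \]
Its projection to $W_j$ has fibers that are products of $r+1$ hyperplanes in $\P^n$. These fibers are irreducible of dimension $(r+1)(n-1)$ and are all isomorphic. Hence $\Gamma_j$ is irreducible, since it is a projective-bundle-type fibration over an irreducible base, and its dimension is $r+(r+1)(n-1)=(r+1)n-1$. The set $\Delta_{W_j}$ is the image of $\Gamma_j$ under the second projection, so $\overline{\Delta_{W_j}}$ is irreducible. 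For a generic $\bm u$ in this image, the fiber is finite: it consists of $W_j$ intersected with $r+1$ generic affine hyperplanes constrained through one point, and a dimension count shows this is finite. Therefore $\dim\overline{\Delta_{W_j}}=(r+1)n-1$, which makes it an irreducible hypersurface in $(\P^n)^{r+1}$. Its defining polynomial $C_j$ is then irreducible in $\Kbar[\bm U_0,\dots,\bm U_r]$, because the ideal of an irreducible hypersurface is principal, generated by an irreducible multihomogeneous polynomial. This gives the second claim of the statement.

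The second step is to show that the hypersurfaces $\overline{\Delta_{W_j}}$ are pairwise distinct. One can recover $W_j$ from $\overline{\Delta_{W_j}}$: a point $x$ lies in the projective closure of $W_j$ exactly when every tuple $\bm u$ of forms vanishing at $x$ lies in $\overline{\Delta_{W_j}}$, which can be checked using the preceding Fact. So distinct components give distinct irreducible polynomials $C_j$, and these are pairwise non-associate. Consequently $\prod_j C_j$ is squarefree and its zero set is $\bigcup_j\overline{\Delta_{W_j}}=\overline{\Delta_V}$. A squarefree polynomial defining a hypersurface is unique up to a scalar. The Chow forms of $V$ are therefore exactly the nonzero scalar multiples of $\prod_j C_j$, that is, the products of Chow forms of the components.

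The main obstacle is the fiber-dimension and irreducibility argument for $\Gamma_j$, in particular justifying that the generic fiber of $\Gamma_j\to(\P^n)^{r+1}$ is finite so that the image really is a hypersurface. For this I would pick a point of $W_j$ and $r$ generic hyperplanes through it, which cut $W_j$ in finitely many points, and add one more generic hyperplane through that point. Since the paper lists this fact as classical, it would also be acceptable to cite the treatments in \cite{JeKrSaSo04} and \cite{KrPaSo01} for this step.
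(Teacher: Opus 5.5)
The paper gives no proof of this statement. It lists it among the ``classical'' properties of Chow forms, and only the preceding Fact comes with a reference (Lemma~1.1 of~\cite{JeKrSaSo04}). Your incidence-correspondence argument is the standard proof, and it is correct. It also explains why $\overline{\Delta_V}$ is a hypersurface at all, which the paper simply asserts. The only result from the paper it relies on is the preceding Fact. Two steps need one more sentence each.

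\emph{Irreducibility of $\Gamma_j$.} Irreducible fibres of constant dimension over an irreducible base do not by themselves give irreducibility. One also needs closedness of the map or local triviality, which your ``projective-bundle-type'' only hints at. The cleanest fix is that $\Gamma_j$ is globally trivial. On $\Gamma_j$, the equalities $u_{i,1}=\cdots=u_{i,n}=0$ force $u_{i,0}=0$, so $(x,\bm u)\mapsto\bigl(x,(u_{i,1}:\cdots:u_{i,n})_{0\le i\le r}\bigr)$ is an isomorphism $\Gamma_j\cong W_j\times(\P^{n-1})^{r+1}$. Its inverse restores $u_{i,0}=-(u_{i,1}x_1+\cdots+u_{i,n}x_n)$, and irreducibility and the dimension count follow at once.

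\emph{Recovery of $W_j$.} The preceding Fact only gives the easy direction: every $\bm u$ through a point of $W_j^{\rm proj}$ lies in $\overline{\Delta_{W_j}}$. Distinctness uses the converse, with $x\in W_j\setminus W_k$ and $Z=W_k^{\rm proj}$, so that $x\notin Z$. The Fact reduces this to producing $r+1$ hyperplanes through $x$ with no common point on $Z$, and that still needs an argument. Since $x\notin Z$, projection from $x$ is a morphism $\pi_x\colon Z\to\P^{n-1}$ whose image has closure of dimension at most $r$. So $r+1$ generic hyperplanes of $\P^{n-1}$ have no common point on that image. Their cones with vertex $x$ are hyperplanes of $\P^n$ through $x$ with no common point on $Z$.

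With these two additions the proof is complete.
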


\begin{fact}
  The Chow forms of $\A^n$ and $\P^n$ are $c \det(\bm U_0,\dots,\bm U_n)$,
  $c \in \Kbar-\{0\}$. 
\end{fact}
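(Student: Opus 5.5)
The plan is to compute the Chow form of $\A^n$ directly from the definition of Section~\ref{sec:defchow}, taking $r=n$ and $V=\A^n$. Here $\Delta_V$ is the set of $\bm u=(\bm u_0,\dots,\bm u_n)\in(\P^n)^{n+1}$ such that the affine linear system \eqref{eq:chow} in the unknowns $x_1,\dots,x_n$ has a solution in $\Kbar^n$. Let $M(\bm u)$ be the $(n+1)\times(n+1)$ matrix whose rows are $\bm u_0,\dots,\bm u_n$. A solution $x$ gives the nonzero vector $(1,x_1,\dots,x_n)$ in the kernel of $M(\bm u)$, so $\det M(\bm u)=0$ and $\Delta_V\subset V(\det(\bm U_0,\dots,\bm U_n))$.

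For the reverse inclusion up to closure, I will show that the open dense subset of the determinant hypersurface where the kernel of $M(\bm u)$ is spanned by a vector $(y_0,\dots,y_n)$ with $y_0\neq 0$ lies in $\Delta_V$. On that set, $x_j=y_j/y_0$ solves \eqref{eq:chow}. This subset is nonempty: take $\bm u_0$ with $u_{0,0}=1$ and all other entries zero, let $\bm u_1,\dots,\bm u_n$ have rank $n$, and choose $u_{0,0}$ so that the determinant vanishes. The hypersurface $V(\det)$ is irreducible because $\det$ is an irreducible polynomial, so this nonempty open subset is dense in it. Hence the Zariski closure of $\Delta_V$ is exactly $V(\det(\bm U_0,\dots,\bm U_n))$.

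The generic determinant is irreducible, hence squarefree, and it is homogeneous of degree $1=\deg \A^n$ in each block. So the squarefree defining polynomials of this hypersurface are exactly $c\det(\bm U_0,\dots,\bm U_n)$ with $c\neq0$. For $\P^n$, the statement follows from the preceding Fact on projective closures, since the projective closure of $\A^n$ is $\P^n$. There is also a direct argument: $\bm u$ vanishes at some point of $\P^n$ exactly when $M(\bm u)$ has a nontrivial kernel, which is the condition $\det M(\bm u)=0$.

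The only step that needs real care is the irreducibility of the generic determinant, which I will take as classical. If a proof is required, I would use the standard argument: $\det$ has degree one in each entry, so any factorization would have to split the entries between the two factors, and this is incompatible with the row and column structure of the determinant.
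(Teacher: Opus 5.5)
Your proof is correct in substance. The paper gives no argument for this statement: it is listed among the ``classical'' facts, with no proof or reference. So what you have is a self-contained verification, not a variant of a proof in the paper. The following steps are all sound:
\begin{itemize}
\item the inclusion $\Delta_V\subset V(\det(\bm U_0,\dots,\bm U_n))$ via the kernel vector $(1,x_1,\dots,x_n)$;
\item the density argument;
\item the Nullstellensatz step (any defining polynomial is $c\det^m$ because $\det$ is irreducible, and squarefreeness forces $m=1$);
\item the direct treatment of $\P^n$;
\item your sketch of the irreducibility of $\det$.
\end{itemize}
For comparison, the paper's own toolkit would give the fact in one line. In the proof of Lemma~\ref{lemma:leadmon}, it uses that a Chow form of an $r$-equidimensional set of degree $D$ is a form of degree $D$ in the maximal minors of the $(r+1)\times(n+1)$ matrix $(U_{i,j})$. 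For $r=n$ there is only one such minor, $\det(\bm U_0,\dots,\bm U_n)$, and $D=\deg\A^n=1$. Your route avoids that heavier input at the cost of a few lines of linear algebra.

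There is one slip to fix: your witness for nonemptiness of the open set does not work. With $\bm u_0=(1,0,\dots,0)$, the first row of $M(\bm u)$ forces $y_0=0$ for every kernel vector; equivalently, the first equation in \eqref{eq:chow} reads $1=0$. So such a $\bm u$ never lies in your open set. Moreover, ``choose $u_{0,0}$'' contradicts having already fixed $u_{0,0}=1$.

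It is cleaner to define the open set as the locus of $V(\det)$ where columns $1,\dots,n$ of $M(\bm u)$ have rank $n$. This is visibly open, and on $V(\det)$ it is exactly the condition that the kernel is a line spanned by a vector with $y_0\neq 0$. To see that it is nonempty, you can do either of the following:
\begin{itemize}
\item Take $\bm u_1,\dots,\bm u_n$ whose block on columns $1,\dots,n$ is invertible, and any nonzero $(u_{0,1},\dots,u_{0,n})$. Then solve for $u_{0,0}$, which is possible because $\det M(\bm u)$ is affine in $u_{0,0}$ with that nonzero minor as coefficient.
\item Take, for $i=1,\dots,n$, $\bm u_i$ with $u_{i,i}=1$ and all other entries $0$, and set $\bm u_0=\bm u_1$. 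This point corresponds to $x=0$.
\end{itemize}
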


\medskip

One can extend the definition of the Chow form, by considering
intersections of $r+1$ forms of arbitrarily chosen degrees rather than
just hyperplanes, as in~\cite{Philippon86}. In this paper, we will use
the following construction, called generalized Chow forms
in~\cite{JeKrSaSo04,KrPaSo01}: we work in the affine setting, and we
replace the last linear form $U_{r,0} + U_{r,1} X_1 + \cdots +
U_{r,n}X_n$ by a polynomial $F(X_1,\dots,X_n)$ of degree $d$ with
generic coefficients $\bm V=(V_{\alpha})$, for $\alpha \in
\N^n$ such that $|\alpha| \le d$ (the other linear forms do not
change). By the same construction as in the case $d=1$, we now obtain
a set $\Delta^{(d)}_V$ in $\P^{n}\times \cdots \times \P^n\times
\P^{N-1}$, with $N={{n+d} \choose n}$. For $d=1$, we recover the
previous definition, up to replacing the $n+1$ indeterminates $\bm V$
by $\bm U_{r}=U_{r,0},\dots,U_{r,n}$.

Any defining polynomial of the Zariski closure of $\Delta^{(d)}_V$ is
called a {\em degree $d$ Chow form} of $V$. These are multi-homogeneous
polynomials in variables $\bm U_0,\dots,\bm U_{r-1},\bm V$, with
degree $D$ in $\bm V$ and $dD$ in all other groups of variables; they
all differ by a nonzero multiplicative constant factor. If $C_d \in
\Kbar[\bm U_0,\dots,\bm U_{r-1},\bm V]$ is such a polynomial, and $F$ is
in $\Kbar[X_1,\dots,X_n]$ of degree at most $d$, we will denote by
$C_d[F]$ the polynomial $C_{d}(\bm U_0,\dots,\bm U_{r-1},\acoeff(F))$,
where $\acoeff(F)$ is the vector of coefficients of $F$.


\subsection{Normalizing Chow forms}

We now discuss how to define a canonical choice for the Chow form of
an algebraic set $V$.  When $V$ is defined over $\Q$, we did mention
its primitive Chow forms in the introduction, but their definition
does not carry over to arbitrary fields. Instead, we will rely on a
construction introduced by Krick, Pardo and Sombra~\cite{KrPaSo01},
which is applicable for $V$ in generic coordinates.

In all that follows, in a ring such as $\Kbar[\bm U_0,\dots,\bm U_r]$, we
use the graded order introduced in Definition~\ref{def:prec}, which is induced
by $U_{0,0} > \dots > U_{0,n} > \cdots > U_{r,0} > \cdots > U_{r,n}$.

\begin{lemma}\label{lemma:leadmon}
  If $V$ is an $r$-equidimensional algebraic set of degree $D$ and $C$
  is a Chow form of it, $C$ has no monomial greater than $U_{0,0}^{D}
  \cdots U_{r-1,r-1}^{D} U_{r,r}^{D}$ .
\end{lemma}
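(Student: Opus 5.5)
Since every monomial of $C$ has degree exactly $D$ in each block $\bm U_i$, all monomials of $C$ have the same total degree $(r+1)D$. So the graded order of Definition~\ref{def:prec} coincides on them with the lexicographic order $U_{0,0} > \cdots > U_{r,n}$, and I will compare monomials block by block.

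Let $m$ be a monomial of $C$ with $m \succ M := U_{0,0}^D U_{1,1}^D \cdots U_{r,r}^D$. Within block $0$, $U_{0,0}^D$ is the largest monomial of degree $D$, so the block-$0$ part of $m$ must equal $U_{0,0}^D$. Let $i$ be the first block where $m$ and $M$ differ. Then $m$ agrees with $U_{j,j}^D$ for all $j<i$. Its block-$i$ part is lex-greater than $U_{i,i}^D$, which for a degree-$D$ monomial forces it to involve some $U_{i,l}$ with $l<i$. It therefore suffices to prove the following claim, for each $i \le r$: every monomial of $C$ whose blocks $0,\dots,i-1$ equal $U_{0,0}^D,\dots,U_{i-1,i-1}^D$ does not involve $U_{i,0},\dots,U_{i,i-1}$.

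To prove the claim, I specialize $\bm U_j = e_j$ (the $j$th unit vector, i.e.\ $U_{j,j}=1$ and $U_{j,l}=0$ for $l\ne j$) for all $j<i$. By homogeneity of degree $D$ in each such block, the only monomials that survive are exactly those with block $j$ equal to $U_{j,j}^D$. The result is the polynomial $P(\bm U_i,\dots,\bm U_r)$ collecting their coefficients, and the claim says $P$ does not depend on $U_{i,0},\dots,U_{i,i-1}$. If $P=0$ there is nothing to prove. Otherwise, the Fact on projective closures shows that $P(\bm u_i,\dots,\bm u_r)=0$ if and only if some $(x_0:\cdots:x_n)\in V^{\rm proj}$ satisfies $x_0=\cdots=x_{i-1}=0$ and $\sum_l u_{k,l}x_l=0$ for $k\ge i$. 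On such points the coordinates $u_{i,l}$ with $l<i$ multiply $x_l=0$. Hence the zero set $Z(P)$ (as a cone in affine space) is invariant under the additive group $G\cong\Kbar^{\,i}$ of translations in the variables $U_{i,0},\dots,U_{i,i-1}$.

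The remaining step is to pass from invariance of $Z(P)$ to invariance of $P$; this is where I expect the only real subtlety, partly because I must avoid derivative arguments in positive characteristic. My approach goes through the irreducible factors.

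1. $G$ is connected and permutes the finitely many irreducible components of $Z(P)$, so it fixes each of them. Thus for every irreducible factor $Q$ of $P$ and every $g\in G$, the polynomial $Q(u+g)$ vanishes on $Z(Q)$.
2. Since $Q(u+g)$ is irreducible of the same degree, this gives $Q(u+g)=c(g)\,Q(u)$ for some scalar $c(g)$.
3. Comparing a fixed nonzero coefficient shows that $c(g)$ is a polynomial in $g$ that never vanishes, so it is constant, and $c(0)=1$. Hence $Q(u+g)=Q(u)$.
4. Invariance of a polynomial under all translations in these variables means it does not involve them.

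Applying this to each factor, $P$ (which is a constant times the product of its irreducible factors with multiplicities) is independent of $U_{i,0},\dots,U_{i,i-1}$. This proves the claim and hence the lemma.
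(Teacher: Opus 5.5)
Your proof is correct, and it takes a genuinely different route from the paper's. The paper argues in two lines via the bracket representation. By Dalbec--Sturmfels, a Chow form is a homogeneous polynomial of degree $D$ in the maximal minors of the $(r+1)\times(n+1)$ matrix $(U_{i,j})$. The minor on columns $i_0<\cdots<i_r$ has leading monomial $U_{0,i_0}\cdots U_{r,i_r}\preceq U_{0,0}\cdots U_{r,r}$, since $i_k\ge k$, and multiplicativity of leading monomials finishes the argument.

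You instead use only the zero-set description of $C$ from the projective-closure Fact, together with an elementary argument that turns invariance of a hypersurface into invariance of its equation. Your translations $U_{i,l}\mapsto U_{i,l}+g_l$ ($l<i$) at $\bm U_l=e_l$ are exactly the row operations $\bm U_i\mapsto \bm U_i+\sum_{l<i}g_l\bm U_l$. So you are in effect re-deriving, in specialized form, the piece of the row-operation invariance of Chow forms that underlies the bracket representation. The paper's route is shorter but outsources this structural input. Yours needs only the Fact already recorded in the paper, and it is transparently valid in every characteristic, which suits the later use of the lemma over $\F_p$. Steps 2--3 can also be shortened: translation does not change the top-degree homogeneous part of $Q$, so $c(g)=1$ immediately.

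One detail to tighten. The Fact characterizes zeros only in $\P^n\times\cdots\times\P^n$, that is, at points where every block $\bm u_k$ ($k\ge i$) is nonzero. A translation, however, can move a point with $\bm u_i=0$ to one with $\bm u_i\neq 0$. This is harmless. If some block $\bm u_k$ vanishes, then $P$ vanishes by homogeneity (as $D\ge 1$). The geometric condition also holds in that case: it asks the $r$-dimensional $V^{\rm proj}$ to meet at most $r$ hyperplanes, and the projective dimension theorem guarantees this. So your ``if and only if'' is valid on all of affine space, and the invariance of $Z(P)$ follows.
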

\begin{proof}
   A Chow form of $V$ is homogeneous of degree $D=\deg(V)$ in the
   $(r+1)$-minors $M_{i_0,\dots,i_r}$ of the $(r+1)\times (n+1)$
   matrix with entries $U_{i,j}$~\cite{DaSt95}, and the leading
   coefficient of each such minor is less than or equal to $U_{0,0}
   \cdots U_{r-1,r-1}U_{r,r}$.
\end{proof}

\begin{definition}
  An $r$-equidimensional algebraic set $V$ of degree $D$
  is in {\em normal position} if the leading monomial of its Chow
  forms is $U_{0,0}^{D} \cdots U_{r-1,r-1}^{D} U_{r,r}^{D}$.

  If this is the case, we denote by $C_V$ the unique Chow form of $V$ with
  leading coefficient $1$, and we call it the {\em normalized Chow form}
  of $V$.
\end{definition}
  

\begin{lemma}\label{lemma:normal}
  If $V_1,\dots,V_s$ are pairwise distinct irreducible and
  $r$-dimensional algebraic sets, and
  $V=V_1 \cup \cdots\cup V_s$, then:
  \begin{itemize}
  \item $V$ is in normal position if and only if all $V_i$'s are
  \item if $V$ is in normal position, then $C_V = C_{V_1} \cdots C_{V_s}$.
  \end{itemize}
\end{lemma}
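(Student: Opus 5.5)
The proof rests on Fact~\ref{fact:unionchow} and on the multiplicativity of leading monomials for a graded monomial order. Fix Chow forms $C_1,\dots,C_s$ of $V_1,\dots,V_s$. Since $V$ is $r$-equidimensional, Fact~\ref{fact:unionchow} shows that $C = C_1 \cdots C_s$ is a Chow form of $V$. The $V_i$ are pairwise distinct, so the $C_i$ are pairwise non-associated irreducibles and the product is squarefree, as the definition requires. Also $D = \deg(V) = \sum_i D_i$ with $D_i = \deg(V_i)$, because degree is additive over irreducible components.

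For the first item, I use that $\prec$ is a monomial order, so $\mathrm{lm}(PQ) = \mathrm{lm}(P)\,\mathrm{lm}(Q)$ for nonzero $P,Q$ over the field $\Kbar$. Hence
\[
\mathrm{lm}(C) = \prod_{i=1}^s \mathrm{lm}(C_i).
\]
Set $M_i = U_{0,0}^{D_i}\cdots U_{r,r}^{D_i}$. By Lemma~\ref{lemma:leadmon} applied to each $V_i$, we have $\mathrm{lm}(C_i) \preceq M_i$. Since multiplying by a fixed monomial preserves $\prec$, this gives
\[
\mathrm{lm}(C) \preceq \prod_i M_i = U_{0,0}^{D}\cdots U_{r,r}^{D}.
\]
Equality holds if and only if $\mathrm{lm}(C_i) = M_i$ for every $i$. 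Indeed, if some $\mathrm{lm}(C_j) \prec M_j$, then replacing the factors one at a time and using strict compatibility of $\prec$ with multiplication makes the product strictly smaller. Equality on the left says exactly that $V$ is in normal position, and $\mathrm{lm}(C_i) = M_i$ for all $i$ says exactly that every $V_i$ is in normal position. This proves the first item.

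For the second item, assume $V$ is in normal position, so each $V_i$ is too. Then $C_{V_1}\cdots C_{V_s}$ is a Chow form of $V$ by the first paragraph. Since leading coefficients are multiplicative, its leading coefficient is $\prod_i \lc(C_{V_i}) = 1$. The normalized Chow form $C_V$ is the unique Chow form of $V$ with leading coefficient $1$, so it equals this product.

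The main point needing care is the strict-inequality step in the first item, namely that a graded lex order is a genuine monomial order (strictly compatible with multiplication). It also relies on Fact~\ref{fact:unionchow} producing exactly the product, without extra multiplicities. Both are standard, and there is no substantial obstacle.
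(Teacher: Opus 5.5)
Your proof is correct and is essentially the paper's argument. Both write a Chow form of $V$ as $C_1\cdots C_s$ via Fact~\ref{fact:unionchow} and bound each factor by $U_{0,0}^{D_i}\cdots U_{r,r}^{D_i}$ via Lemma~\ref{lemma:leadmon}. The paper then phrases the conclusion as the coefficient of $U_{0,0}^{D}\cdots U_{r,r}^{D}$ in $C$ being the product of the corresponding coefficients $\ell_i$, which is the same observation as your multiplicativity of leading monomials and leading coefficients.
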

\begin{proof}
  Let $C_1,\dots,C_s$ be Chow forms of $V_1,\dots,V_s$, so that $C=C_1
  \cdots C_s$ is a Chow form of $V$ (Fact~\ref{fact:unionchow}). If we
  write $D_i =\deg(C_i)$ for all $i$ and $D=D_1 +\cdots+D_s$,
  Lemma~\ref{lemma:leadmon} shows that all monomials in $C_i$ are less
  than or equal to $U_{0,0}^{D_i} \cdots U_{r,r}^{D_i}$. This implies
  that the coefficient $\ell$ of $U_{0,0}^{D} \cdots U_{r,r}^{D}$ in
  $C$ is the product of the coefficients $\ell_i$ of the corresponding
  monomials $U_{0,0}^{D_i} \cdots U_{r,r}^{D_i}$ in
  $C_1,\dots,C_s$. In particular, $\ell$ is nonzero if and only if all
  $\ell_i$'s are (proving the first point), and is equal to one when all
  $\ell_i$'s are (proving the second point).  
\end{proof}

In~\cite{KrPaSo01}, Krick {\it et al.} gave a sufficient condition for
$V$ to be in normal position: this is the implication $(2) \implies
(3)$ of the following lemma. Below, a {\em generic} property is one
that holds in a non-empty Zariski-open set of the corresponding
parameter space.
\begin{lemma}\label{lemma:normaleq}
  Let $V \subset \Kbar^n$ be $r$-equidimensional of degree $D$. The following
  are equivalent:
  \begin{enumerate}
  \item for generic $x_1,\dots,x_r$ in $\Kbar$, the set $V \cap V(X_1-x_1,\dots,X_r-x_r)$ has cardinality $D$
  \item there exist $x_1,\dots,x_r$ in $\Kbar$ such that the set $V \cap V(X_1-x_1,\dots,X_r-x_r)$ has cardinality $D$
  \item $V$ is in normal position
  \item the projective algebraic set $V^{\rm proj} \cap V(X_0,\dots,X_r)$ is empty.
  \end{enumerate}
\end{lemma}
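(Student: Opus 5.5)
The plan is to prove the cycle $(3)\Leftrightarrow(4)$, then $(4)\Rightarrow(1)\Rightarrow(2)\Rightarrow(4)$. Throughout I use the fact recalled in the proof of Lemma~\ref{lemma:leadmon}: a Chow form $C$ of $V$ is a homogeneous polynomial of degree $D$ in the maximal minors $M_{i_0,\dots,i_r}$ of the matrix $(U_{i,j})$. I also use the Fact characterizing the zeros of $C$ through $V^{\rm proj}$.

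\emph{$(3)\Leftrightarrow(4)$.} Specialize $U_{i,j}=0$ for all $j>r$. Every minor except $M_{0,\dots,r}$ then vanishes. Hence $C$ specializes to $c\,\det(U_{i,j})_{0\le i,j\le r}^{D}$ for some constant $c$. I claim $c$ is exactly the coefficient of $U_{0,0}^D\cdots U_{r,r}^D$ in $C$. The reasons are that this monomial involves only variables with $j\le r$, and that in the $D$-th power of the determinant it arises with coefficient $1$ (only from the diagonal term). So $V$ is in normal position iff this specialization is nonzero. Now take $\bm u$ with $u_{i,j}=0$ for $j>r$ and the block $(u_{i,j})_{i,j\le r}$ invertible. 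The hyperplanes $u_{i,0}X_0+\cdots+u_{i,r}X_r=0$ then cut out exactly $V(X_0,\dots,X_r)$. By the Fact, $C(\bm u)=0$ iff $V^{\rm proj}\cap V(X_0,\dots,X_r)\neq\emptyset$. Since invertible blocks are dense, the specialized $C$ is identically zero iff (4) fails, which proves $(3)\Leftrightarrow(4)$.

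\emph{$(4)\Rightarrow(1)$ and $(1)\Rightarrow(2)$.} Let $L_x=V(X_1-x_1X_0,\dots,X_r-x_rX_0)\subset\P^n$. Every point of $L_x\cap\{X_0=0\}$ lies in $V(X_0,\dots,X_r)$. Hence (4) implies that $V^{\rm proj}\cap L_x$ has no points at infinity, for every $x$. It is therefore an affine and projective set, so it is finite and equal to the fiber $V\cap V(X_1-x_1,\dots,X_r-x_r)$. Specialize the first $r$ blocks of $C$ to the coefficient vectors of $X_i-x_i$ and keep $\bm U_r$ generic, to get $\Phi_x(\bm U_r)$. By the Fact, $\Phi_x$ vanishes exactly on the hyperplanes $\bm U_r\cdot(1,\xi)=0$, for $\xi$ in the fiber. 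It is nonzero, because the fiber is finite. It is homogeneous of degree $D$ in $\bm U_r$, so it is a product of $D$ such linear forms counted with multiplicity. For $x$ generic it remains to see these $D$ factors are distinct, i.e.\ that $\Phi_x$ is squarefree. I would obtain this from the squarefreeness of $C$: let $\delta(\bm U_0,\dots,\bm U_{r-1})$ be a nonzero coefficient of the discriminant of $C$ with respect to a suitable specialization of $\bm U_r$ to a line (e.g.\ $\bm U_r=T e_0+\bm a$). One needs $\delta$ not to vanish identically on the $r$-dimensional family $\{X_i-x_i\}$. This is the step I expect to be the main obstacle, especially in positive characteristic. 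My first attempt would be to use the $\mathrm{GL}$-action on the $\bm U_i$ (triangular changes of the first $r$ blocks only multiply $C$ by a power of a determinant) to reduce the generic linear forms to the form $X_i-x_i-\sum_{j>r}b_{ij}X_j$. I would then argue that, since (4) is stable under changes of the last $n-r$ coordinates, one may take $b=0$ generically. $(1)\Rightarrow(2)$ is immediate.

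\emph{$(2)\Rightarrow(4)$.} Suppose the fiber over some $x$ has $D$ points, but $V^{\rm proj}$ meets $V(X_0,\dots,X_r)$. Then $V^{\rm proj}\cap L_x$ contains those $D$ affine points together with at least one further component at infinity. By the refined B\'ezout inequality (the sum of the degrees of the irreducible components of $V^{\rm proj}\cap L$, for $L$ a linear space, is at most $\deg V^{\rm proj}=D$), the total degree would be at least $D+1$. This is a contradiction, which closes the cycle.
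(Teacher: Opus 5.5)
Your arguments for $(3)\Leftrightarrow(4)$ and $(2)\Rightarrow(4)$ are correct and essentially the paper's. The paper reads the coefficient of $U_{0,0}^D\cdots U_{r,r}^D$ off directly as $C(\bm e_0,\dots,\bm e_r)$ by multihomogeneity, rather than going through the minors.

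\textbf{The gap: $(4)\Rightarrow(1)$.} This is where you expected it. Under (4) you correctly get that every fibre is finite and that $\Phi_x=c\prod_\xi(\bm U_r\cdot(1,\xi))^{m_\xi}$ with $\sum_\xi m_\xi=D$. So the whole content of the implication is that $m_\xi=1$ for generic $x$, and your sketch does not deliver this.
\begin{itemize}
\item Knowing the count is $D$ for generic forms $X_i-x_i-\sum_{j>r}b_{ij}X_j$ says nothing about the special value $b=0$.
\item The substitution $X_i\mapsto X_i+\sum_{j>r}b_{ij}X_j$ ($i\le r$) that removes the $b_{ij}$ does not fix $V(X_0,\dots,X_r)$. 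Changes of the last $n-r$ coordinates do fix it, but they do not remove the $b_{ij}$.
\item More decisively, no argument using only (4) and the squarefreeness of $C$ can work, because $(4)\Rightarrow(1)$ is false in positive characteristic.
\end{itemize}
For the last point, take $V=V(X_2^p-X_1)\subset\overline{\F_p}^2$. Then $r=1$, $D=p$, and $V^{\rm proj}\cap V(X_0,X_1)=\emptyset$, so (3) and (4) hold. Yet every fibre $V\cap V(X_1-x_1)$ is a single point. Hence $\Phi_x$ is the $p$-th power of a linear form for every $x$, even though $C$ is squarefree. Equivalently, your discriminant $\delta(\bm U_0)$ is a nonzero polynomial that vanishes identically on the family $\bm U_0=(-x_1,1,0)$.

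\textbf{The missing ingredient} is a separability, i.e.\ characteristic-zero, input. The paper argues as follows. Under (4), projection from $V(X_0,\dots,X_r)$ is a finite map $V^{\rm proj}\to\P^r$ of degree $D$ (Mumford's Corollary~5.6). In characteristic zero, the generic fibre of such a map has as many points as its degree.

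In your notation: for generic $x$, the exponents $m_\xi$ coming from a component $W$ of $V$ all equal the inseparable degree of the projection restricted to $W$. So what you must prove is that the projection is separable. This is automatic in characteristic zero, and it is exactly what fails in the example above.

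So this implication, in the paper as well, has to be read in characteristic zero. The directions used elsewhere in the paper, $(3)\Leftrightarrow(4)$ and $(2)\Rightarrow(3)$, are characteristic-free.

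Note also that the paper proves $(2)\Rightarrow(1)$ separately, via Heintz's bound on the cardinality of finite fibres. In your cycle that implication only comes through $(4)\Rightarrow(1)$. As written, you have therefore established only $(1)\Rightarrow(2)\Rightarrow(4)\Leftrightarrow(3)$.
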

\begin{proof}
  Obviously, (1) implies (2). Conversely, assume (2), and let us prove
  (1). We first prove that we can assume $V$ irreducible.

  For $x_1,\dots,x_r$ as in (2), and for any irreducible component $W$ of
  $V$, with degree $D_W$, $W \cap V(X_1-x_1,\dots,X_r-x_r)$ is finite
  (because the fibre of $V$ is), and thus of cardinality $d_W \le D_W$
  (by B\'ezout). But since $V \cap V(X_1-x_1,\dots,X_r-x_r)$ has
  cardinality $D =\sum D_W$, we conclude $d_W=D_W$ for all $W$.  If we
  prove (1) for all $W$, then (1) follows for $V$ as well, since for
  generic $x_1,\dots,x_r$, the fibres $W \cap V(X_1-x_1,\dots,X_r-x_r)$
  are pairwise disjoint.

  Assuming here that $V$ is irreducible, let us then prove (1), and
  let us write $\pi_r$ for the projection on the
  $X_1,\dots,X_r$-space. If the Zariski closure of $\pi_r(V)$ has
  dimension less than $r$, by the theorem on the dimension of
  fibres~\cite[Theorem~9.9]{MilneAG}, all nonempty fibres of the
  restriction of $\pi_r$ to $V$ have positive dimension, which
  contradicts (2). It follows that $\pi_r: V\to \Kbar^r$ is dominant, with
  generically finite fibres. Then, by~\cite[Proposition~1]{Heintz83},
  all finite fibres of this mapping have cardinality less than or
  equal to the cardinality of the generic fibre, so the generic
  fibre has degree at least $D$. On the other hand, it cannot have
  degree more than $D$, so we are done.

  Next, we prove that (3) is equivalent to (4). Let $V^{\rm proj}$ be
  the projective closure of $V$ and let $C$ be a Chow form of $V$. If
  we let $D=\deg(V)$, the coefficient of $U_{0,0}^{D} \cdots
  U_{r,r}^{D}$ in $C$ is obtained by evaluating $C$ at $\bm U_i=
  (0,\dots,0,1,0,\dots,0)$ for all $i$ with the value $1$ at entry
  $U_{i,i}$. In particular, this coefficient is nonzero (that is, $V$
  is in normal position) if and only if there exists no $(x_0 : \cdots
  : x_n)$ in $V^{\rm proj}$ such that $x_0 = \cdots = x_r = 0$, that
  is, $V^{\rm proj} \cap V(X_0,\dots,X_r)$ is empty. This proves our claim.
  
  Finally, we prove that (1) and (2) are both equivalent to (4). 
  Suppose that (2) holds, and consider the intersection $V^{\rm proj} \cap
  V(X_1-x_1 X_0,\dots,X_r-x_r X_0)$. By B\'ezout's inequality (for
  projective varieties), it has degree at most $D$, but our assumption
  shows that it contains $D$ isolated points (at finite distance), so
  it has no point at infinity. This shows that
  \[ V^{\rm proj} \cap  V(X_0,X_1-x_1 X_0,\dots,X_r-x_r X_0) = V^{\rm proj} \cap  V(X_0,X_1,\dots,X_r)\]
  is empty, so we get (4).

  Finally, suppose that (4) holds, so that $V^{\rm proj} \cap
  V(X_0,X_1,\dots,X_r)$ is empty. By~\cite[Corollary~5.6]{Mumford76},
  the degree of the restriction of $\pi_r$ to $V^{\rm proj}$ is equal
  to $D=\deg(V)$. The degree of such a mapping is by definition the
  cardinality of its generic fibre, so (1) holds.
\end{proof}

Suppose that $V$ is in normal position, so that $V^{\rm proj} \cap
V(X_0,\dots,X_{r-1},X_r)$ is empty. It follows that for $d \ge 1$,
$V^{\rm proj} \cap V(X_0,\dots,X_{r-1},X^d_r)$ is empty. In turns, this implies
that if $C$ is a degree $d$ Chow form of $V$,
$C({\bm e}_0,\dots,{\bm e}_{r-1},{\bm e}_{r,d})\ne 0$, where ${\bm e}_i$ is the vector of
coefficients of $X_i$ (for $0 \le i < r$), and ${\bm e}_{r,d}$ is the vector
of coefficients of $X_r^d$.

\begin{definition}
  If $V$ is in normal position, for $d \ge 1$ we denote by $C_{d,V}$
  the unique degree $d$ Chow form of $V$ such that
  $C_{d,V}({\bm e}_0,\dots,{\bm e}_{r-1},{\bm e}_{r,d})=1$.
\end{definition}


\subsection{Characteristic polynomials}

The last family of polynomials we will need in our algorithms are {\em
  characteristic polynomials} of an algebraic set. Let $V \subset
\Kbar^n$ be an $r$-equidimensional algebraic set, with $\Kbar[\bm
  U_0,\dots,\bm U_r]$  the ring where its Chow forms lie.  Consider
new blocks of variables $\bm U'_0,\dots,\bm U'_r$, with $\bm U'_i =
T_i,U_{i,0},\dots,U_{i,n}$ ($T_i$ a new symbol) for all $i$. Define
also $\bm v_i = U_{i,0}-T_i,U_{i,1},\dots,U_{i,n}$.

\begin{definition}
  Let $V$ be an $r$-equidimensional algebraic set of degree $D$.  The
  {\em characteristic polynomials} of $V$ are all polynomials of the
  form $P=(-1)^D C(\bm v_0,\dots,\bm v_r) \in \Kbar[\bm U'_0,\dots,\bm
    U'_r]$, for $C$ a Chow form of $V$.
\end{definition}
The characteristic polynomials of $V$ are homogeneous of degree
$D=\deg(V)$ in the groups of variables $\bm U'_i$ for all $i$, so
their total degree is $rD$. See~\cite[Section~3.1]{JeKrSaSo04} for a
geometric motivation for this construction.

For the next lemma, recall that in $\K[\bm U'_0,\dots,\bm U'_r]$, we
use the graded lexicographic order induced by $T_0 > U_{0,0} >
\cdots > U_{0,n} > \cdots > T_r > U_{r,0} > \cdots > U_{r,n}$.
\begin{lemma}\label{lemma:defcharpoly}
  Let $V$ be an $r$-equidimensional algebraic set of degree $D$ and in
  normal position, and let $C_V$ be its normalized Chow form.  Then,
  $P_V=(-1)^D C_V(\bm v_0,\dots,\bm v_r)$ has leading monomial $T_0^D
  U_{1,1}^D \cdots U_{r,r}^D$ and leading coefficient 1.
\end{lemma}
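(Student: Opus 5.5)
The plan is to track, monomial by monomial, what the substitution $U_{i,0}\mapsto U_{i,0}-T_i$ does to $C_V$. Since $C_V$ is homogeneous of degree $D$ in each block $\bm U_i$, every monomial $\mu=\prod_{i,j}U_{i,j}^{a_{i,j}}$ of $C_V$ expands under the substitution into a signed sum of monomials
\[\nu=\prod_{i=0}^r T_i^{b_i}U_{i,0}^{a_{i,0}-b_i}\prod_{j\ge 1}U_{i,j}^{a_{i,j}},\qquad 0\le b_i\le a_{i,0}.\]
Each such $\nu$ has sign $(-1)^{\sum b_i}$ times a binomial coefficient. All of these monomials have total degree $(r+1)D$. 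The graded lexicographic order therefore reduces to pure lexicographic comparison in the order $T_0>U_{0,0}>\dots>U_{0,n}>T_1>\dots>U_{r,n}$. I will show that $\nu^*=T_0^DU_{1,1}^D\cdots U_{r,r}^D$ is the largest monomial occurring, and that it arises from exactly one pair $(\mu,\nu)$, namely $\mu=U_{0,0}^D U_{1,1}^D\cdots U_{r,r}^D$ with $b_0=D$ and $b_i=0$ for $i\ge1$. Its coefficient in $C_V(\bm v_0,\dots,\bm v_r)$ is then $1\cdot(-1)^D$, so $P_V=(-1)^DC_V(\bm v_0,\dots,\bm v_r)$ has leading coefficient $1$.

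The argument goes block by block. First, the exponent of $T_0$ in any $\nu$ is $b_0\le a_{0,0}\le D$. So any $\nu$ with $b_0<D$ is strictly smaller than $\nu^*$. If $b_0=D$, then $a_{0,0}=D$, and hence $\mu=U_{0,0}^D m$ with $m$ a monomial in $\bm U_1,\dots,\bm U_r$ only. In that case the block-$0$ part of $\nu$ is exactly $T_0^D$, the same as in $\nu^*$. Now I use Lemma~\ref{lemma:leadmon} together with normal position: every monomial of $C_V$ is at most $U_{0,0}^DU_{1,1}^D\cdots U_{r,r}^D$, with equality only for the leading one. Comparing lexicographically, this says that $m\le U_{1,1}^D\cdots U_{r,r}^D$ in the order on the remaining variables.

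Next I compare $\nu$ with $\nu^*$ inductively on the blocks $i=1,\dots,r$. Assume the parts of $m$ in blocks $1,\dots,i-1$ agree with $U_{1,1}^D\cdots U_{i-1,i-1}^D$. Each block has degree $D$, so those blocks contain nothing else. Then $m\le U_{1,1}^D\cdots U_{r,r}^D$ forces $a_{i,0}=\dots=a_{i,i-1}=0$, since any of these variables exceeds $U_{i,i}$. It also forces $a_{i,i}\le D$. In particular $b_i=0$, so $T_i$ does not occur in $\nu$, and the part of $\nu$ in block $i$ equals the part of $m$. If $a_{i,i}<D$, then $\nu<\nu^*$ at the variable $U_{i,i}$, because all larger variables of $\nu$ and $\nu^*$ agree up to that point: the $T_i$ exponent and the $U_{i,j}$ exponents for $j<i$ are all $0$. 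If $a_{i,i}=D$, the two blocks agree and I continue with block $i+1$. At the end of the induction, either $\nu<\nu^*$, or $\mu$ is the leading monomial of $C_V$ and $\nu=\nu^*$ with the stated choice of the $b_i$.

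I expect the main obstacle to be the bookkeeping in the inductive step. Substituting $U_{i,0}-T_i$ introduces the variable $T_i$, which sits above every variable of block $i$. One might worry that a monomial $m$ which is lexicographically small but contains $U_{i,0}$ could produce a large $T_i$-term. The induction rules this out: it excludes $U_{i,0}$ only when the earlier blocks already match $\nu^*$, and in every other case the comparison has already been decided at an earlier, higher variable. The case where $V$ is empty, or $D=0$, needs no discussion, since the statement concerns an $r$-equidimensional set, which is nonempty by the paper's convention.
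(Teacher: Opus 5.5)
Your proof is correct and follows essentially the paper's route: expand each monomial of $C_V$ under $U_{i,0}\mapsto U_{i,0}-T_i$, then use Lemma~\ref{lemma:leadmon} together with normal position to compare everything against $T_0^DU_{1,1}^D\cdots U_{r,r}^D$. The paper compresses your block-by-block induction into two observations: the top term of the image of a monomial $m$ is $\pm m'$, where $m'$ renames each $U_{i,0}$ as $T_i$, and this renaming respects the order, so $m'\le T_0^DU_{1,1}^D\cdots U_{r,r}^D$ at once; your explicit check that $\nu^*$ comes from a single pair $(\mu,\nu)$ spells out the no-cancellation point that the paper leaves implicit.
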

\begin{proof}
  Write $M_0=T_0^D U_{1,1}^D \cdots U_{r,r}^D$. Consider a monomial
  $m=\bm U_0^{\bm \alpha_0} \cdots \bm U_r^{\bm \alpha_r}$ in a Chow
  form $C$ of $V$, and let $M$ be its evaluation at $\bm v_0,\dots,\bm
  v_r$. For our monomial order, up to sign, the leading monomial in
  $M$ is $m'$, which is obtained by replacing $U_{i,0}$ by $T_i$ in
  $m$ (for all $i$). We know that $m$ is less than or equal to $U_{0,0}^D \cdots
  U_{r,r}^D$ (Lemma~\ref{lemma:leadmon}), and as a result, $m'$ is
  less than or equal to $M_0$.

  Since $V$ is in normal position, its normalized Chow form $C_V$
  features the monomial $U_{0,0}^D \cdots U_{r,r}^D$ with coefficient
  $1$; its evaluation at $\bm v_0,\dots,\bm v_r$ then has the monomial
  $M_0$ above, with coefficient $(-1)^D$.
\end{proof}
Naturally, we call $P_V$ as above the {\em normalized} characteristic
polynomial of $V$. The next lemma follows directly from
Lemma~\ref{lemma:normal}.

\begin{lemma}\label{lemma:normalcharpoly}
  If $V_1,\dots,V_s$ are pairwise distinct irreducible $r$-dimensional
  algebraic sets, all in normal position, and $V=V_1 \cup \cdots\cup
  V_s$, then $P_V = P_{V_1} \cdots P_{V_s}$.
\end{lemma}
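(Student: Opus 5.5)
The plan is to deduce the statement directly from Lemma~\ref{lemma:normal} by substitution. First, the hypotheses of Lemma~\ref{lemma:normalcharpoly} let us apply Lemma~\ref{lemma:normal}: each $V_i$ is irreducible, $r$-dimensional and in normal position, so $V$ is in normal position. This makes $C_V$, and hence $P_V$, well defined, and it gives $C_V = C_{V_1}\cdots C_{V_s}$. Note that $V$ is $r$-equidimensional, being a nonempty finite union of irreducible sets of dimension $r$. Its degree is $D=D_1+\cdots+D_s$, where $D_i=\deg(V_i)$, because the $V_i$ are pairwise distinct and therefore are exactly the irreducible components of $V$.

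Next, substitute $\bm v_0,\dots,\bm v_r$ into the factorization of $C_V$. Evaluation at $(\bm v_0,\dots,\bm v_r)$ is a ring homomorphism
\[\Kbar[\bm U_0,\dots,\bm U_r]\to \Kbar[\bm U'_0,\dots,\bm U'_r],\]
so
\[C_V(\bm v_0,\dots,\bm v_r)=\prod_{i=1}^s C_{V_i}(\bm v_0,\dots,\bm v_r).\]
Multiplying both sides by $(-1)^D=\prod_i (-1)^{D_i}$ gives $P_V=\prod_i (-1)^{D_i}C_{V_i}(\bm v_0,\dots,\bm v_r)=\prod_i P_{V_i}$. Each factor here is the normalized characteristic polynomial of $V_i$ as defined just after Lemma~\ref{lemma:defcharpoly}.

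There is no real obstacle in this argument. The only points that need care are checking that the degree of $V$ is the sum of the degrees of the $V_i$, so that the signs match, and noting that $V$ is in normal position, so that $P_V$ is defined. Both of these follow from Lemma~\ref{lemma:normal} and the definition of degree.
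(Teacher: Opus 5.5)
Your proposal is correct and follows the paper's route: the paper simply notes that the statement follows directly from Lemma~\ref{lemma:normal}, which is exactly your argument of substituting $\bm v_0,\dots,\bm v_r$ into $C_V=C_{V_1}\cdots C_{V_s}$. Your explicit check that $\deg(V)=\sum_i\deg(V_i)$, so that the signs $(-1)^D=\prod_i(-1)^{D_i}$ match, supplies the one detail the paper leaves implicit.
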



\subsection{Height of varieties}

Finally, we introduce the notion of height of varieties we use in this
paper, following~\cite{Philippon95,KrPaSo01}, and we state its main
properties.

Let $V \subset \C^n$ be $r$-equidimensional and defined over $\Q$ and
let $C$ be any Chow form of $V$ with coefficients in $\Q$. Then, we
set
\[h(V) = \sum_{p~ {\rm prime}} \ln(|C|_p) + m(C, S^{r+1}_{n+1}) + (r+1) \deg(V) \sum_{i=1}^n \frac 1{2i},\]
where
\begin{itemize}
\item $|C|_p=\max_\alpha (|c_\alpha|_p)$, for all nonzero coefficients
  $c_\alpha$ of $C$, where $|\ |_p$ is the $p$-adic absolute value,
  given by $|p|_p=1/p$ and $|p'|_p=1$ ($p'$ prime $\ne p$), and extended by
  multiplicativity to $\Q-\{0\}$,
\item $m(C, S^{r+1}_{n+1})$ is a variant of the usual Mahler measure,
  defined as the integral of $\ln(|C|)$ over
  $S_{n+1}\times\cdots\times S_{n+1}$ ($r+1$ factors), where $S_{n+1}$
  is the unit sphere in $\C^{n+1}$, endowed with the Haar measure of
  mass $1$.
\end{itemize}
This does not depend on the choice of $C$, as long as it has rational
coefficients, and the height of an equidimensional algebraic set $V$
defined over $\Q$ is the sum of the heights of its $\Q$-irreducible
components. Finally, for an arbitrary $V$, still defined over $\Q$, we
let $h(V)$ be the sum of the heights of its equidimensional
components.

If $V$ is in normal position, we highlighted a distinguished choice
for its Chow form, namely the normalized Chow form $C_V$, but another
possibility was pointed out in the introduction: choosing a primitive
Chow form (thus, with integer coefficients) $C_V^{\rm primitive}$,
which is unique up to sign.  

\begin{lemma}\label{lemma:hCV}
    When $V$ is in normal position, the equality $h(C_V)=h(C_V^{\rm
      primitive})$ holds.
\end{lemma}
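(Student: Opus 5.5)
Since $V$ is in normal position and defined over $\Q$, its normalized Chow form $C_V$ has rational coefficients and leading coefficient $1$. It is also a rational multiple of $C_V^{\rm primitive}$. The plan is to relate the two through Definition~\ref{def:height}, which involves the minimal denominator and the scaled integer polynomial.

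Write $C_V = n C'/d$ with $C'$ primitive and $n,d$ coprime positive integers, as in the proof of Lemma~\ref{lemma:gcd2}. By uniqueness of primitive Chow forms up to sign, $C' = \pm C_V^{\rm primitive}$. Let $\ell = \lc(C')\in\Z$. The leading coefficient of $C_V$ is $1$, so $n\ell/d = 1$, that is, $n\ell = d$. Since $n$ divides $d$ and $\gcd(n,d)=1$, we get $n=1$. Hence $C_V = C'/d$ with $d = |\ell|$, up to the sign convention making $d$ positive.

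Next we check that $d$ is the minimal denominator of $C_V$. Suppose some $d'<d$ gave $d'C_V\in\Z[\bm U]$. Then $d'C'/d$ would have integer coefficients, so $d/\gcd(d,d')>1$ would divide the content of $C'$, which is $1$. This is a contradiction, so $d$ is the minimal denominator. By Definition~\ref{def:height},
\[ h(C_V)=\max\big(\ln d,\ h(dC_V)\big)=\max\big(\ln|\ell|,\ h(C')\big). \]

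Finally, $|\ell|$ is the absolute value of one nonzero coefficient of $C'$, so $\ln|\ell| \le h(C')$. Therefore $h(C_V)=h(C')=h(C_V^{\rm primitive})$, since the height is invariant under sign.

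There is no real obstacle in this argument. The only point needing care is showing $n=1$, which follows from the leading coefficient of $C_V$ being $1$, exactly as in the first step of the proof of Lemma~\ref{lemma:gcd2}.
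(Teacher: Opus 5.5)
Your proof is correct and follows the same route as the paper: write $C_V$ as the primitive Chow form divided by its leading coefficient, then read off the height from Definition~\ref{def:height}. You also make explicit two points the paper leaves implicit, namely that this leading coefficient is the minimal denominator of $C_V$ and that its logarithm is at most $h(C_V^{\rm primitive})$.
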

\begin{proof}
  Because $C_V$ has leading coefficient $1$, we can
  write $C_V = C_V^{\rm primitive}/d$, where $d$ is the leading
  coefficient of $C_V^{\rm primitive}$. It follows from
  Definition~\ref{def:height} that $h(C_V)=h(C_V^{\rm primitive})$.
\end{proof}


\begin{lemma}\label{lemma:hchow}
  When $V$ is in normal position, we have
  \[ h(C_V) \le h(V) + (r+1)\ln(n+2)\deg(V).\]
\end{lemma}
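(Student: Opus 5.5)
By Lemma~\ref{lemma:hCV}, it suffices to bound $h(C_V^{\rm primitive})$, and the natural tool is the comparison between the height of a primitive integer polynomial and its Mahler measure. Take $C=C_V^{\rm primitive}$ in the definition of $h(V)$. Since $C$ is primitive, $|C|_p=1$ for every prime $p$, so the finite-place sum vanishes and
\[ h(V) = m(C,S^{r+1}_{n+1}) + (r+1)\deg(V)\sum_{i=1}^n \frac 1{2i}. \]
We may assume first that $V$ is $\Q$-irreducible; the general case is handled at the end.

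The key step is an inequality of the form
\[ \ln\|C\|_\infty \le m(C,S^{r+1}_{n+1}) + \deg(V)\sum_{k=0}^{r}\Bigl(\sum_{i=1}^n \frac1{2i} + \ln(n+2)\Bigr), \]
where $\|C\|_\infty$ is the maximum absolute value of the coefficients and $C$ is multihomogeneous of degree $D=\deg(V)$ in each of the $r+1$ blocks of $n+1$ variables. Such an inequality is classical; it is \cite[Lemma~1.1]{KrPaSo01}, or follows from Philippon's comparison between sphere Mahler measure and ordinary Mahler measure. It can be obtained in two steps:
\begin{itemize}
\item For a homogeneous polynomial of degree $D$ in $n+1$ variables, the sphere measure and the torus Mahler measure $m$ satisfy $m(f)\le m(f,S_{n+1})+D\sum_{i=1}^n \frac1{2i}$. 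This is Lelong/Philippon's identity relating integration over the sphere and over the torus, and the inequality applies block by block since these integrals are iterated.
\item On the torus, each coefficient is bounded by $\binom{n+D}{n}$-type factors times $e^{m(f)}$, giving $\ln\|f\|_\infty \le m(f) + D\ln(n+1)$ per block. The constant $\ln(n+2)$ in the statement leaves slack for this.
\end{itemize}
Summing over the $r+1$ blocks, the $\sum\frac1{2i}$ terms cancel against those already contained in $h(V)$, which gives
\[ h(C)=\ln\|C\|_\infty \le h(V)+(r+1)\ln(n+2)\deg(V). \]
Here $h(C)$ is a maximum over coefficients of integers, so it equals $\ln\|C\|_\infty$.

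For $V$ with several $\Q$-irreducible components $V_j$, note two things. First, $C_V^{\rm primitive}=\pm\prod_j C_{V_j}^{\rm primitive}$ by Fact~\ref{fact:unionchow} and Gauss' lemma. Second, both the Mahler measure and the $\sum\frac1{2i}$ term are additive over the components. It is therefore cleaner to run the argument above directly on the product $C$ rather than summing the per-component bounds, since the coefficient comparison is applied once to $C$ of degree $\deg V$ per block. Heights add appropriately, so no extra loss arises.

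The main obstacle is pinning down the exact constants in the sphere-versus-coefficient comparison, in particular checking that the stated $\ln(n+2)$ (rather than $\ln(n+1)$ plus a binomial term) suffices. I would cite \cite[Lemma~1.1 and (1.7)]{KrPaSo01}, which give $h(C)\le m(C,S^{r+1}_{n+1})+ (r+1)\deg(V)\bigl(\sum\frac1{2i}+\ln(n+2)\bigr)$ in essentially this form, rather than rederive it.
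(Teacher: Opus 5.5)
Your proposal is correct and follows the paper's proof essentially verbatim. You substitute $C_V^{\rm primitive}$ into the definition of $h(V)$ so the $p$-adic terms vanish, pass from the sphere measure to the torus Mahler measure with \cite[Eq.~(1.2)]{KrPaSo01} (not Lemma~1.1, as you first attribute it), bound the coefficients by the torus Mahler measure with \cite[Lemma~1.1]{KrPaSo01} to get the $(r+1)\ln(n+2)\deg(V)$ term, and conclude with Lemma~\ref{lemma:hCV}; your separate handling of several $\Q$-irreducible components is harmless but unnecessary, since the paper applies the defining formula to $C_V^{\rm primitive}$ directly.
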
 
\begin{proof}
 Using $C_V^{\rm primitive}$ in the
  expression for $h(V)$, all $p$-adic terms in the definition of
  $h(V)$ vanish; we are then left with
  \[h(V) =  m(C_V^{\rm primitive}, S^{r+1}_{n+1}) + (r+1) \deg(V) \sum_{i=1}^n \frac 1{2i}.\]
  Let $m(C_V^{\rm primitive})$ be the Mahler measure of the polynomial $C_V^{\rm primitive}$, that is, the integral of
  $\ln(|C_V^{\rm primitive}|)$ over the torus $S_1^{(r+1)(n+1)}$. We have from~\cite[Eq. (1.2)]{KrPaSo01}
  \begin{equation}\label{eq:ineqmm}
   m(C_V^{\rm primitive}) \le m(C_V^{\rm primitive}, S^{r+1}_{n+1}) + (r+1) \deg(V) \sum_{i=1}^n \frac 1{2i} = h(V).    
  \end{equation}
  We can then apply~\cite[Lemma~1.1]{KrPaSo01}, which gives
  \[ h(C_V^{\rm primitive}) \le m(C_V^{\rm primitive}) + (r+1)\ln(n+2)\deg(V) =  h(V) + (r+1)\ln(n+2)\deg(V),\]
  and the previous lemma allows us to conclude.
\end{proof}

To prove a similar estimate on degree $d$ Chow forms, we will use the
{\em local heights} of an algebraic set $V$, which are defined in
Section~1.2.4 of~\cite{KrPaSo01}. For $V$ defined over $\Q$, $r$-equidimensional and in
normal position, set
\begin{itemize}
\item $h_\infty(V)=m(C_V, S_{n+1}^{r+1})+ (r+1) \deg(V) \sum_{i=1}^n \frac 1{2i}$
\item $h_p(V)=\ln(|C_V|_p)$, $p$ prime.
\end{itemize}
In particular, we see that $h(V)=h_\infty(V) + \sum_p h_p(V)$.
Because $C_V$ has a coefficient equal to $1$, we see that $h_p(V) \ge
0$ for all $p$. The same holds for the height at infinity; this claim is
most likely well-known, but we did not find a reference for it.

\begin{lemma}
  For $V$ in normal position, $h_\infty(V)\ge 0$.
\end{lemma}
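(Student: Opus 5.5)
We must show $m(C_V,S^{r+1}_{n+1}) + (r+1)\deg(V)\sum_{i=1}^n \frac{1}{2i} \ge 0$. Our plan is to combine inequality~\eqref{eq:ineqmm}, which applies verbatim with $C_V$ in place of $C_V^{\rm primitive}$ since it is a statement about any polynomial of the given multidegree, with a lower bound on the usual Mahler measure on the torus. This gives
\[ m(C_V) \le m(C_V,S^{r+1}_{n+1}) + (r+1)\deg(V)\sum_{i=1}^n \frac 1{2i} = h_\infty(V),\]
where $m(C_V)$ is the integral of $\ln|C_V|$ over $S_1^{(r+1)(n+1)}$. It is therefore enough to prove $m(C_V)\ge 0$.

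For the classical Mahler measure over the torus, we will use the standard fact that for any nonzero polynomial $P\in\C[Y_1,\dots,Y_N]$ and any monomial of $P$ that is a vertex of its Newton polytope, $m(P)\ge \ln|c|$, where $c$ is the corresponding coefficient. The proof will go by induction on the number of variables via Jensen's formula. In one variable, $m(P)=\ln|\mathrm{lc}(P)|+\sum\ln\max(1,|\alpha_j|)\ge \ln|\mathrm{lc}(P)|$. For several variables, order the variables so that the chosen monomial is the leading one for a lex order, write $P=\sum_k P_k(Y_2,\dots,Y_N)Y_1^k$, and use Fubini together with the one-variable bound to get $m(P)\ge m(P_{k_{\max}})$, then iterate.

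The leading monomial $U_{0,0}^D\cdots U_{r,r}^D$ for the graded lexicographic order is extremal. Within the homogeneous component of total degree $(r+1)D$, it is the lex-largest monomial, and lex-largest monomials are vertices of the Newton polytope. Concretely, we would apply the iterated Jensen argument in the variable order $U_{0,0}>U_{0,1}>\cdots>U_{r,n}$. At each step we take the top coefficient in the current variable, and we end with the coefficient of the lex-leading monomial, which is $1$ for $C_V$ by normalization. Hence $m(C_V)\ge \ln 1=0$, and therefore $h_\infty(V)\ge 0$.

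The main point to double-check is that~\eqref{eq:ineqmm}, i.e.\ \cite[Eq.~(1.2)]{KrPaSo01}, holds for an arbitrary multihomogeneous polynomial of multidegree $(D,\dots,D)$, and not only for the primitive Chow form. It does: it compares the torus and sphere Mahler measures of any such polynomial, and the inequality used in the proof of Lemma~\ref{lemma:hchow} uses nothing specific to primitivity. The iterated Jensen bound is routine.
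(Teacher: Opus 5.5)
Your proposal is correct and follows the paper's proof essentially verbatim. You apply Eq.~(1.2) of \cite{KrPaSo01} to $C_V$ to reduce to $m(C_V)\ge 0$, then iterate the bound $m(P)\ge m(P_{d_1})$ along the lex order $U_{0,0}>\cdots>U_{r,n}$ (the paper cites this bound from Boyd, while you rederive it via Jensen and Fubini), and use homogeneity to identify the graded-lex and lex leading monomials, whose coefficient is $1$. Your general ``vertex of the Newton polytope'' claim is more than you need, and reordering variables alone does not make every vertex lex-leading, but only the lex-leading case is used, so nothing is lost.
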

\begin{proof}
  As in we did in Equation~\eqref{eq:ineqmm} in the proof of the previous lemma (but
  applied to $C_V$ this time), we use~\cite[Eq. (1.2)]{KrPaSo01} to get
  \[ 
  m(C_V) \le m(C_V, S^{r+1}_{n+1}) + (r+1) \deg(V) \sum_{i=1}^n \frac 1{2i} = h_\infty(V).
  \]
  We are then left with proving that $m(C_V)$ itself is non-negative.

  For a nonzero polynomial $P \in \C[Y_1,\dots,Y_t]$, we have the
  inequality between Mahler measures $m(P) \ge m(P_{d_1})$, if we let
  $P_{d_1} \in \C[Y_2,\dots,Y_t]$ be its leading coefficient with
  respect to $Y_1$ (see~\cite[p.~117]{Boyd1981}). Taking repeated
  leading coefficients, we deduce that $m(P) \ge \ln(|\lc_{\rm
    lex}(P)|)$, where $\lc_{\rm lex}(P)$ is the leading
  coefficient of $P$ for the lexicographic order $Y_1 > \cdots > Y_t$.

  We saw that the leading monomial of the normalized Chow form $C_V$
  for the graded lexicographic order $U_{0,0} > \cdots > U_{r,n}$ is
  $U_{0,0}^{D} \cdots U_{r-1,r-1}^{D} U_{r,r}^{D}$, for $D=\deg(V)$ and $r=\dim(V)$,
  with leading coefficient $1$. Since $C_V$ is homogeneous, it admits
  the same leading monomial for the lexicographic order, and the
  conclusion follows.
\end{proof}


The previous claim allows us to derive an upper bound on the height of degree $d$
Chow forms. It will be enough to bound the height of $C_{d,V}[F]$ for some
given $F$ (see the definition at the end of Subsection~\ref{sec:defchow}).
\begin{lemma}\label{lemma:hcdvf}
  Suppose that $V$ is defined over $\Q$, $r$-equidimensional and in
  normal position. For $d \ge 1$ and $F$ in $\Z[X_1,\dots,X_n]$ of
  degree at most $d$ and height at most $h$, we have
  \[ h(C_{d,V}[F]) \le
  d h(V) + h \deg(V) + (r+1)\ln(n+1) d\deg(V).\]
\end{lemma}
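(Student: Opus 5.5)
Following the preceding discussion, the plan is to bound the height of $C_{d,V}[F]$ separately at each place (archimedean and $p$-adic) using the local heights $h_\infty(V)$ and $h_p(V)$. We then assemble the local bounds with the same bookkeeping used in the proof of Lemma~\ref{lemma:hchow}.

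The key tool is the result of Krick, Pardo and Sombra on generalized Chow forms (Section~1.2.4 and the corresponding lemma/proposition in~\cite{KrPaSo01}). It bounds local quantities of $C_{d,V}[F]$ by local heights of $V$. At infinity, the Mahler measure $m(C_{d,V}[F])$ is at most $d\,h_\infty(V) + \deg(V)\,\ln\|F\|_1$ plus a term of order $(r+1)\ln(n+1)\, d\deg(V)$. At a prime $p$, $\ln|C_{d,V}[F]|_p$ is at most $d\,h_p(V) + \deg(V)\ln|F|_p$.

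The normalization $C_{d,V}(\bm e_0,\dots,\bm e_{r-1},\bm e_{r,d})=1$ matches the normalization $C_V$ (coefficient $1$ at $U_{0,0}^D\cdots U_{r,r}^D$). This is what allows these local statements to be phrased for the specific normalized forms $C_{d,V}$ and $C_V$. I would check this compatibility first, for instance through the product formula expressing $C_{d,V}$ in terms of the points of $V\cap V(\ell_0,\dots,\ell_{r-1})$.

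For $F$ with integer coefficients we have $\ln|F|_p\le 0$ for all $p$, so the $p$-adic bound gives $\ln|C_{d,V}[F]|_p \le d\,h_p(V)$. At infinity we bound $\ln\|F\|_1$ by $h+\ln\binom{n+d}{n}$. Alternatively, we can absorb this term by working directly with a sup-norm estimate for the height of a polynomial in terms of its Mahler measure, as in~\cite[Lemma~1.1]{KrPaSo01}. That lemma also converts the Mahler measure of $C_{d,V}[F]$ into a bound on $\ln$ of its largest coefficient, at a cost of a term linear in the degree times $\ln(n+1)$.

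The height in the sense of Definition~\ref{def:height} is the maximum of $\ln$ of the denominator and of the numerators. It is controlled by the sum over all places: $\ln(\text{denominator}) = \sum_p \ln|C_{d,V}[F]|_p$, and the numerators are bounded by the archimedean size times that denominator. Here the nonnegativity results $h_p(V)\ge 0$ and $h_\infty(V)\ge 0$, proved just above, are essential. They let us sum $d\,h_p(V)$ over the $p$ with positive contribution and add $d\,h_\infty(V)$, without sign issues, to get at most $d\,h(V)$. Adding the $h\deg(V)$ and $(r+1)\ln(n+1)d\deg(V)$ terms then yields the claimed bound.

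The main obstacle is getting the archimedean constant to be exactly $(r+1)\ln(n+1)d\deg(V)$, with no extra $\ln\binom{n+d}{n}\deg(V)$ term. I would first try to cite the precise KPS statement for generalized Chow forms, which uses sup norms on spheres. Failing that, I would enlarge the constant slightly, since only the asymptotic form matters downstream.
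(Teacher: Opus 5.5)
Your route is the paper's route. You apply the local estimates of Lemma~2.1 in \cite{KrPaSo01} to the generalized Chow form, use $h_p(V)\ge 0$ and $h_\infty(V)\ge 0$, and sum through $h(C_{d,V}[F])=\max(0,\ln|C_{d,V}[F]|)+\sum_p\max(0,\ln|C_{d,V}[F]|_p)$. The denominator comes from these positive parts, not from $\sum_p\ln|C_{d,V}[F]|_p$. The bound $h_\infty(V)\ge 0$ is what covers the case $\ln|C_{d,V}[F]|\le 0$.

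The gap is the archimedean constant, which is the only quantitative content of the lemma. You leave it open, and your fallback of enlarging it would prove a weaker inequality than the one stated. Your accounting as written also double counts. A Mahler-measure bound already carrying $(r+1)\ln(n+1)d\deg(V)$, followed by a further conversion cost linear in the degree times $\ln(n+1)$, gives about $(2r+1)\ln(n+1)d\deg(V)$.

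The obstacle goes away with correct bookkeeping. First, $\binom{n+d}{n}=\prod_{j=1}^{d}\frac{n+j}{j}\le(n+1)^d$, so $\ln\|F\|_1\le h+d\ln(n+1)$. Hence $\deg(V)\ln\|F\|_1\le h\deg(V)+\ln(n+1)d\deg(V)$, and no extra $\ln\binom{n+d}{n}$ term survives. This is the form in which the paper quotes the archimedean half of that lemma: $m(C_{d,V}[F],S^r_{n+1})+rd\deg(V)\sum_{i=1}^n\frac{1}{2i}\le d\,h_\infty(V)+h\deg(V)+\ln(n+1)d\deg(V)$.

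Second, this controls the spherical Mahler measure, which is an integral over spheres, not a sup norm. You need Eq.~(1.2) of \cite{KrPaSo01}, applied to $C_{d,V}[F]$ (degree at most $d\deg(V)$ in each block), to get the same bound for the usual Mahler measure $m(C_{d,V}[F])$.

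Third, converting to the largest coefficient costs only $r\,d\deg(V)\ln(n+1)$. After substituting the coefficients of $F$ for $\bm V$, only the $r$ blocks $\bm U_0,\dots,\bm U_{r-1}$ of $n+1$ variables each remain. Adding the two contributions gives exactly $(r+1)\ln(n+1)d\deg(V)$, which is the chain the paper uses.
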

\begin{proof}
  Lemma~2.1 in~\cite{KrPaSo01} shows the following inequalities:
  \begin{itemize}
  \item $m(C_{d,V}[F], S_{n+1}^r) + r d \deg(V) \sum_{i=1}^n 1/2i \le d
    h_\infty(V) + h \deg(V) + \ln(n+1) d\deg(V)$
  \item $h_p(C_{d,V}[F]) \le d h_p(V) $ if $p$ is prime,
  \end{itemize}
  where $h_p(C_{d,V}[F])=\max(0,\ln(|C_{d,V}[F]|_p))$ for $p$ prime.
  The assumptions in~\cite{KrPaSo01} differ from ours in a minor way:
  that reference assumes that $V \cap V(X_1,\dots,X_r)$ has
  cardinality $\deg(V)$. However, that assumption is only used to prove (as
  in Lemma~\ref{lemma:normaleq}) that $V$ is in normal position.

  Because $C_{d,V}[F]$ has degree at most $d \deg(V)$ in each group of
  variables $\bm U_i$, Equation~(1.2) in~\cite{KrPaSo01} shows that
  the Mahler measure of $C_{d,V}[F]$ satisfies
  \[ m(C_{d,V}[F]) \le m(C_{d,V}[F], S^{r}_{n+1}) + r d \deg(V) \sum_{i=1}^n \frac 1{2i}.\]
  As a consequence, we have $m(C_{d,V}[F]) \le d h_\infty(V) + h \deg(V) +
  \ln(n+1) d\deg(V)$, and Lemma~2.12 in~\cite{KrPaSo01} gives
  \begin{equation}\label{ineq:lncdvF}
    \ln(|C_{d,V}[F]|) \le d h_\infty(V) + h \deg(V) + (r+1)\ln(n+1) d\deg(V),    
  \end{equation}
  where $|C_{d,V}[F]|$ is the maximum of the absolute values
  of the coefficients of $C_{d,V}[F]$. Let now $h_\infty(C_{d,V}[F]) =
  \max(0,\ln(|C_{d,V}[F]|))$, and let us establish
  \begin{equation*}\label{ineq:hcdvF}
     h_\infty(C_{d,V}[F]) \le d h_\infty(V) + h \deg(V) + (r+1)\ln(n+1) d\deg(V)
  \end{equation*}
  \begin{itemize}
  \item if $\ln(|C_{d,V}[F]|) \le 0$, so that $h_\infty(C_{d,V}[F]) =
    0$, our inequality follows from the fact that $h_\infty(V) \ge 0$,
    by the previous lemma, so the whole right-hand size is non-negative
  \item else, the claim comes directly from Eq.~\eqref{ineq:lncdvF}.
  \end{itemize}
  To finish, we use the observation~\cite[p.~10]{KrPaSo01} that the
  height of $C_{d,V}[F]$ is given by
  $h(C_{d,V}[F])=h_\infty(C_{d,V}[F]) + \sum_p h_p(C_{d,V}[F])$. Summing
  the upper bounds for $h_p(C_{d,V}[F])$, $p$ prime, and $h_\infty(C_{d,V}[F])$,
  we get $h(C_{d,V}[F]) \le d h(V) + h \deg(V) + (r+1)\ln(n+1) d\deg(V)$.
\end{proof}

  
\section{Basic operations using Chow forms}

In this section, we introduce algorithms to perform basic
set-theoretical operations on algebraic sets through their Chow forms;
the majority of these results are originally found
in~\cite{JeKrSaSo04} and~\cite{BlJeSo04}.

The algorithms typically take multivariate polynomials as input; these
are intended to represent Chow forms of certain algebraic sets,
possibly with the assumption that we are in normal position. Not all
multivariate polynomials represent Chow forms of algebraic sets (see
the discussion of the Chow varieties in~\cite[Chapter 4]{GeKaZe94});
for inputs that do not satisfy such assumptions, the algorithm may
return a (meaningless) output, or {\nondef} in some cases. For
instance, the notation $\gcd( . , .)$ denotes the monic GCD of two
polynomials; it is undefined if both arguments vanish. When this is
the case, our convention is that the procedure that attempted this
computation immediately aborts and returns {\nondef} as well. Other
cases where we will return {\nondef} are when a division by zero
occurs. 

In all cases, we also discuss the special case where the inputs are
defined over $\Q$, and give sufficient conditions on a prime $p$ to
guarantee that all steps in the computation can be carried out modulo
$p$.


\subsection{Union} 

Let $V_1,V_2$ be $r$-equidimensional algebraic sets defined over a
field $\K$, and let $C_1,C_2$ be Chow forms for $V_1,V_2$. By
definition, the Chow forms of $V=V_1 \cup V_2$ are the squarefree
parts of $C_1C_2$ (which are defined up to multiplicative
constants). We will use the following procedure to compute such
polynomials.

\medskip\noindent{\sc Union\_Chow}\vspace{-1mm}
\begin{algorithmic}[1]
\Require polynomials $C_1,C_2$ in $\K[\bm U_0,\dots,\bm U_r]$, with $\bm U_i =U_{i,0},\dots,U_{i,n}$  for all $i$ 
\Ensure  polynomial $C$ in $\K[\bm U_0,\dots,\bm U_r]$ or \nondef
\State $P \gets C_1 C_2$
\State $Q \gets \gcd(P, \partial P/\partial U_{0,0})$
\State \Return $P/Q$
\end{algorithmic}

Note that when either $C_1$ or $C_2$ vanishes (which will never happen
when we use this procedure), $Q$ is undefined (as we only defined
monic GCDs when the arguments are not both zero).

In the proof of
correctness, we will use the assumption that both input polynomials
have trivial GCDs with their partial derivative with respect to
$U_{0,0}$. For algebraic sets in normal position, we first verify that
this assumption is always satisfied over a field of characteristic
zero.

\begin{lemma}\label{lemma:gcdeq1}
  Suppose that $V$ is an $r$-equidimensional algebraic set, defined over a
  field $\K$ of characteristic zero. Then for any Chow form of $V$, $\gcd(C,\partial
  C/\partial U_{0,0})=1$.
\end{lemma}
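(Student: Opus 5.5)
The plan is to reduce to the irreducible factors of $C$ over $\Kbar$. Since $\gcd$ is unchanged under field extension, we may work in $\Kbar[\bm U_0,\dots,\bm U_r]$. By Fact~\ref{fact:unionchow}, $C = c\, C_1\cdots C_t$, where $c\in\Kbar-\{0\}$, the $C_j$ are Chow forms of the pairwise distinct irreducible components $W_1,\dots,W_t$ of $V$, and each $C_j$ is irreducible. The $C_j$ are pairwise non-associate, because each $C_j$ defines the hypersurface $\overline{\Delta_{W_j}}$ and these hypersurfaces are distinct for distinct components. Indeed, a component can be recovered from its Chow form; alternatively, if $\overline{\Delta_{W_j}}=\overline{\Delta_{W_k}}$, then intersecting with $r$ generic hyperplanes shows that $W_j$ and $W_k$ share a dense set of points.

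Writing $\partial = \partial/\partial U_{0,0}$, we have
\[
  \partial C = c\sum_j \partial C_j \prod_{k\ne j} C_k .
\]
Any irreducible common factor of $C$ and $\partial C$ is associate to some $C_j$. Since $C_j$ divides every term of the sum except the $j$-th, it must divide $\partial C_j \prod_{k\ne j} C_k$. As $C_j$ is coprime to each $C_k$ with $k\ne j$, it follows that $C_j$ divides $\partial C_j$. Comparing $U_{0,0}$-degrees, this forces $\partial C_j = 0$. In characteristic zero, this means $C_j$ does not involve $U_{0,0}$ at all.

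The main step is therefore to show that every Chow form of an irreducible $r$-dimensional $W$ genuinely depends on $U_{0,0}$. I would use homogeneity: $C_j$ is homogeneous of degree $\deg(W_j)\ge 1$ in $\bm U_0$. If $C_j$ were free of $U_{0,0}$, then $C_j(\bm U_0,\dots)$ would vanish at every $(\bm u_0,\dots,\bm u_r)$ with $\bm u_0=(1:0:\cdots:0)$ and arbitrary $\bm u_1,\dots,\bm u_r$, because its value there equals its value at $\bm u_0=(0:\cdots:0)$ in the remaining coordinates, which is $0$ by homogeneity of positive degree in $\bm U_0$. By the Fact on projective closures, vanishing there means that for all $\bm u_1,\dots,\bm u_r$ there is a point $(x_0:\cdots:x_n)\in W^{\rm proj}$ with $x_0=0$ lying on the hyperplanes $\bm u_1,\dots,\bm u_r$. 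However, $W^{\rm proj}\cap V(X_0)$ has dimension $r-1$ (or is empty when $r=0$), so $r$ generic hyperplanes miss it, which is a contradiction.

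An alternative route to the same conclusion is to use the description of $C$ as a polynomial in the maximal minors $M_{i_0,\dots,i_r}$: the minors involving column $0$ all contain $U_{0,0}$, and one checks that not all terms avoid them. I expect the dimension argument above to be the cleaner path. The one delicate point is the conversion from "$C_j$ divides $\partial C_j$" to "$\partial C_j=0$", and that follows directly from degree considerations.
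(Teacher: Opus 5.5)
Your proof is correct and follows the same route as the paper: an irreducible factor of $C$ that divides $\partial C/\partial U_{0,0}$ must divide its own derivative, so in characteristic zero it is free of $U_{0,0}$, which is impossible for the Chow form of an irreducible component. The only difference is that the paper cites \cite[Lemma~3.7]{JeKrSaSo04} for this last fact, whereas you prove it directly by evaluating at $\bm u_0=(1:0:\cdots:0)$ and using that $r$ generic hyperplanes miss $W^{\rm proj}\cap V(X_0)$, which has dimension at most $r-1$; this self-contained argument is valid.
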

\begin{proof}
  Suppose that $C$ and $\partial C/\partial U_{0,0}$ admit a
  nonconstant irreducible common factor $A$, say with coefficients in
  $\Kbar$.  If we write $C = A B$, for some polynomial $B$, we know
  that $A$ does not divide $B$ (because $C$ has no multiple factor
  over $\Kbar$). Taking the partial derivative with respect to
  $U_{0,0}$, we see that $A$ divides $\partial A/\partial U_{0,0} B +
  A \partial B/\partial U_{0,0}$, so it divides $\partial A/\partial
  U_{0,0}$. This is possible only if $\partial A/\partial U_{0,0}=0$,
  and because we are in characteristic zero, this implies that $A$
  does not depend on $U_{0,0}$. But $A$ is the Chow form of an
  irreducible component of $V$, so by~\cite[Lemma~3.7]{JeKrSaSo04}, it
  depends on $U_{0,0}$, a contradiction.
\end{proof}  

\begin{lemma}\label{lemma:unionchow}
  Suppose that $V_1$ and $V_2$ are algebraic sets defined over a field
  $\K$, both either empty or $r$-equidimensional and in normal
  position.  Given their normalized Chow forms $C_{V_1},C_{V_2}$, and
  assuming that $\gcd(C_{V_1},\partial C_{V_1}/\partial
  U_{0,0})=\gcd(C_{V_2},\partial C_{V_2}/\partial U_{0,0})=1$ and that
  $\K$ does not have characteristic two, {\sc Union\_Chow} returns the
  normalized Chow form of $V=V_1 \cup V_2$.
\end{lemma}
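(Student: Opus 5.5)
We have to show that {\sc Union\_Chow} returns $C_V$. We do this by factoring $P=C_{V_1}C_{V_2}$ over $\Kbar$, computing $Q$ explicitly, and then checking that $P/Q$ is monic and squarefree with the right irreducible factors.

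\emph{Setting up the factorization.} First dispose of the empty cases. If $V_1$ is empty, its normalized Chow form is the constant $1$ (the only constant with leading coefficient $1$), so $P=C_{V_2}$. Then $Q=\gcd(C_{V_2},\partial C_{V_2}/\partial U_{0,0})=1$ by hypothesis, and $P/Q=C_{V_2}=C_V$. The case $V_2$ empty is symmetric. Now assume both sets are nonempty. Decompose $V_1\cup V_2$ into pairwise distinct irreducible components. Let $W_1,\dots,W_a$ be the components lying only in $V_1$, $W'_1,\dots,W'_b$ those lying only in $V_2$, and $Z_1,\dots,Z_c$ the common ones. By Lemma~\ref{lemma:normal} all of them are in normal position, and
\[ C_{V_1}=\textstyle\prod_i C_{W_i}\prod_k C_{Z_k},\qquad C_{V_2}=\prod_j C_{W'_j}\prod_k C_{Z_k}.\]
Set $R=\prod_k C_{Z_k}$, so that $P=C_V\cdot R$. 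Here $C_V=\prod C_{W_i}\prod C_{W'_j}\prod C_{Z_k}$ by Lemma~\ref{lemma:normal} applied to $V$, which is itself in normal position. By Fact~\ref{fact:unionchow}, each factor is irreducible in $\Kbar[\bm U_0,\dots,\bm U_r]$. Distinct components have non-associate Chow forms, since the Chow form determines the variety. So the irreducible factors of $P$ are the $C_{W_i}$ and $C_{W'_j}$ with multiplicity one, and the $C_{Z_k}$ with multiplicity two.

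\emph{Computing $Q$.} The hypothesis $\gcd(C_{V_i},\partial C_{V_i}/\partial U_{0,0})=1$ implies that every irreducible factor $A$ of $C_{V_i}$ satisfies $\partial A/\partial U_{0,0}\neq 0$. Indeed, writing $C_{V_i}=AB$, if $\partial A/\partial U_{0,0}=0$ then $A$ divides $\partial C_{V_i}/\partial U_{0,0}=A\,\partial B/\partial U_{0,0}$. We then use the standard multiplicity argument over $\Kbar$. For a factor $A$ of multiplicity $m$ in $P$, write $P=A^mB$ with $A\nmid B$. Then
\[\partial P/\partial U_{0,0}=A^{m-1}\bigl(mB\,\partial A/\partial U_{0,0}+A\,\partial B/\partial U_{0,0}\bigr).\]
The bracket is not divisible by $A$ exactly when $A\nmid mB\,\partial A/\partial U_{0,0}$. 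Since $\deg_{U_{0,0}}\partial A/\partial U_{0,0}<\deg_{U_{0,0}}A$ and this derivative is nonzero, $A$ does not divide it. It remains to ensure $m\neq 0$ in $\K$. Here $m\in\{1,2\}$, and this is exactly where characteristic $\neq 2$ is used. Hence the exact power of $A$ dividing $\gcd(P,\partial P/\partial U_{0,0})$ is $A^{m-1}$, so $Q$ is, up to a constant, $\prod_k C_{Z_k}=R$. Both $Q$ and $R$ are monic for $\prec$ ($R$ is a product of monic normalized Chow forms, and leading coefficients multiply), so $Q=R$. Since GCDs over $\K$ and $\Kbar$ agree, this holds in $\K[\bm U_0,\dots,\bm U_r]$.

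\emph{Conclusion and main obstacle.} We get $P/Q=C_VR/R=C_V$, which is monic, so it is exactly the normalized Chow form. The main obstacle is the multiplicity computation in positive characteristic. We must exclude both that $\partial A/\partial U_{0,0}$ vanishes and that the multiplicity $2$ becomes $0$. The first is handled by the gcd hypothesis, and the second by $\mathrm{char}\,\K\neq 2$. We also need that common components contribute identical monic factors, which follows from the uniqueness of normalized Chow forms.
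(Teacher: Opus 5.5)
Your proof is correct and follows the paper's own argument. You write $P=C_{V_1}C_{V_2}$ as a product of the normalized Chow forms of the components, with the common ones squared. You use the gcd hypothesis to get $\partial A/\partial U_{0,0}\neq 0$ for every irreducible factor $A$, and $\mathrm{char}\,\K\neq 2$ to identify $Q$ as the product of the common factors, then conclude by monicity. Your per-factor computation with $P=A^mB$ is a compact version of the paper's explicit expansion of $\partial P/\partial U_{0,0}$, and treating the empty cases separately is a harmless addition.
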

\begin{proof}
  Write $V = \cup_i W_i \ \cup\ \cup_j Y_j\ \cup\ \cup_k Y'_k$, where
  the $W_i$'s are irreducible components common to $V_1$ and $V_2$,
  the $Y_j$'s are irreducible components of only $V_1$, and the ${Y'_k}$'s
  are irreducible components of only $V_2$. Because we assume that
  $V_1,V_2$ are in normal position, this is also the case for all
  $W_i$, $Y_j$, $Y'_k$ and thus $V$ (first item in
  Lemma~\ref{lemma:normal}), from which we get that $C_V =\prod_i
  C_{W_i} \prod_j C_{Y_j} \prod_k C_{Y'_k}$ (second item in Lemma~\ref{lemma:normal}). On
  the other hand, we also have $C_{V_1} = \prod_i C_{W_i} \prod_j
  C_{Y_j}$ and $C_{V_2} =\prod_i C_{W_i} \prod_k C_{Y'_k}$ (same
  lemma), so $P=C_{V_1} C_{V_2} = \prod_i C_{W_i}^2 \prod_j C_{Y_j} \prod_k C_{Y'_k}$.
  
  Our assumption that $\gcd(C_{V_1},\partial C_{V_1}/\partial
  U_{0,0})=1$ implies that for all $i$, $C_{W_i}$ does not divide
  $\partial C_{W_i}/\partial U_{0,0}$: this would be possible
  only if the latter vanished, but in this case, $C_{W_i}$ would be a
  common factor to $C_{V_1}$ and $\partial C_{V_1}/\partial U_{0,0}$.
  The same holds for all $C_{Y_j}$, and, considering $C_{V_2}$, all~$C_{Y'_k}$.

  Differentiating the equality giving $P$, we obtain
  \begin{align*}
  \frac{\partial P}{\partial U_{0,0}}
 & = \sum_i \left (2 C_{W_i} \frac{\partial C_{W_i}}{\partial U_{0,0}} \prod_{i'\ne i}
  C_{W_{i'}}^2  \prod_j C_{Y_j} \prod_k C_{Y'_k} \right )
  +\sum_j \left ( \frac{\partial C_{Y_j}}{\partial U_{0,0}}  \prod_i C_{W_{i}}^2  \prod_{j'\ne j} C_{Y_{j'}}
  \prod_k C_{Y'_k} \right)\\
  &+ \sum_k \left ( \frac{\partial C_{Y'_k}}{\partial U_{0,0}}  \prod_i C_{W_{i}}^2  \prod_{j} C_{Y_{j}}
  \prod_{k' \ne k} C_{Y'_{k'}} \right).
  \end{align*}
  The only possible common factors to $P$ and its partial derivative
  are the $C_{W_i}$'s (or their squares), $C_{Y_j}$'s and
  $C_{Y'_k}$'s:
  \begin{itemize}
  \item all $C_{W_i}$'s divide $\frac{\partial P}{\partial U_{0,0}}$,
    but no $C_{W_i}^2$ divides it, since otherwise $C_{W_i}$ would
    divide $2 {\partial C_{W_i}}/{\partial U_{0,0}}$. This is
    impossible, since we assume that $2$ is nonzero in $\K$, and we
    saw that $C_{W_i}$ does not divide its partial derivative.
  \item no $C_{Y_j}$ divides $\frac{\partial P}{\partial U_{0,0}}$,
    since it would imply that it divides $\frac{\partial C_{Y_k}}{\partial U_{0,0}}$,
    which we pointed out is not the case.
  \item similarly, no $C_{Y'_k}$ divides $\frac{\partial P}{\partial U_{0,0}}$.
  \end{itemize}
  We deduce that $Q = \gcd(P, \partial P/\partial U_{0,0})$ is equal
  to $c \prod_i C_{W_i}$, for some nonzero constant $c$. Because all
  $C_{W_i}$ have leading coefficient $1$, this is also the case for
  their product, so the leading coefficient of the product above is
  equal to $c$. Since $Q$ is a monic GCD, we get that $c=1$, and thus
  $Q=\prod_i C_{W_i}$. Finally, the output $P/Q$ is $\prod_i C_{W_i}
  \prod_j C_{Y_j} \prod_k C_{Y'_j}$, which we saw is the normalized
  Chow form of~$V$.
\end{proof}

\begin{lemma}\label{lemma:unionH}
  Suppose that $V_1$ and $V_2$ are algebraic sets in $\C^n$, defined over $\Q$,
  both either empty or $r$-equidimensional and in normal position,
  with respective degrees at most $D_1,D_2$ and heights at most
  $H_1,H_2$. There exists a nonzero integer $\Delta^{{\rm
      union}}_{V_1,V_2}$ with
  \[ h(\Delta^{{\rm union}}_{V_1,V_2}) \in O(n^6 (D_1+D_2)(H_1+H_2 + D_1+D_2)) \]
  such that if a prime $p$ does not divide $\Delta^{{\rm union}}_{V_1,V_2}$, then:
  \begin{itemize}
  \item $C_{V_1},C_{V_2}$ and $C=\textsc{Union\_Chow}(C_{V_1},C_{V_2})$ have coefficients in
    $\Z_{(p)}$, 
  \item $\textsc{Union\_Chow}(C_{V_1} \bmod p,C_{V_2} \bmod p)=C \bmod p$,
  \item $\gcd(C_{V_1} \bmod p,\partial C_{V_1}/\partial
    U_{0,0} \bmod p)=\gcd(C_{V_2} \bmod p,\partial C_{V_2}/\partial U_{0,0} \bmod p)=1$.
  \end{itemize}
\end{lemma}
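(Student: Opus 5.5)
The plan is to build $\Delta^{\rm union}_{V_1,V_2}$ as a product of integers coming from three applications of Lemma~\ref{lemma:gcd2}, together with leading coefficients of primitive forms. The variables are the $N=(r+1)(n+1)\le (n+1)^2$ indeterminates $U_{i,j}$, so the parameter $s$ in Lemma~\ref{lemma:gcd2} is $O(n^2)$.

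First, I would bound the degrees and heights of the polynomials involved. By multihomogeneity, $C_{V_i}$ has total degree $(r+1)D_i\le (n+1)D_i$. By Lemma~\ref{lemma:hchow}, $h(C_{V_i})\le H_i+(n+1)\ln(n+2)D_i$. Next, write $C_{V_i}=C_i^{\rm prim}/\ell_i$, where $\ell_i=\lc(C_i^{\rm prim})$, as in Lemma~\ref{lemma:hCV}. The first factor of $\Delta$ is $\ell_1\ell_2$. If $p\nmid \ell_1\ell_2$, then $C_{V_1},C_{V_2}$ lie in $\Z_{(p)}[\bm U]$, and their reductions are monic of the same degrees. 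For the product $P=C_{V_1}C_{V_2}$, I would bound the height via the primitive parts: $\ell_1\ell_2P$ is a product of integer polynomials. Its height is at most $h(C_1^{\rm prim})+h(C_2^{\rm prim})+O(N\cdot\deg)$ by the standard product bound, which gives $O(H_1+H_2+n^2(D_1+D_2))$. The derivative $\partial P/\partial U_{0,0}$ has degree at most the degree of $P$ and gains at most $\ln\deg(P)$ in height.

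Second, I would handle the gcd conditions. I would apply Lemma~\ref{lemma:gcd2} to the pairs $(C_{V_1},\partial C_{V_1}/\partial U_{0,0})$, $(C_{V_2},\partial C_{V_2}/\partial U_{0,0})$ and $(P,\partial P/\partial U_{0,0})$, producing integers $\Delta_1,\Delta_2,\Delta_3$. Over $\Q$, the first two gcds are $1$ by Lemma~\ref{lemma:gcdeq1}, so for $p\nmid\Delta_1\Delta_2$ the third bullet holds. For $p\nmid\Delta_3$, $Q=\gcd(P,\partial_{U_{0,0}}P)$ is in $\Z_{(p)}$, and $Q\bmod p=\gcd(P\bmod p,\partial_{U_{0,0}}(P\bmod p))$; reduction commutes with differentiation. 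For the final division, $P=(P/Q)\,Q$ with $Q$ monic, so $Q\bmod p\neq0$. Lemma~\ref{lemma:divmodp} then gives $C=P/Q\in\Z_{(p)}[\bm U]$ and $C\bmod p=(P\bmod p)/(Q\bmod p)$. This is exactly what the mod-$p$ run of {\sc Union\_Chow} returns, and that run never outputs \nondef. I would take $\Delta^{\rm union}_{V_1,V_2}=\ell_1\ell_2\Delta_1\Delta_2\Delta_3$, which is nonzero.

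Third, I would check the height bound, which is the main obstacle. In Lemma~\ref{lemma:gcd2} with $s=O(n^2)$, degrees $d_F,d_G=O(n(D_1+D_2))$ and heights $h_F,h_G=O(H_1+H_2+n^2(D_1+D_2))$, the terms are as follows. The term $s\,d_F h_G$ is $O(n^3(D_1+D_2)(H_1+H_2+n^2(D_1+D_2)))=O(n^5(D_1+D_2)(H_1+H_2+D_1+D_2))$. The term $s^2d_Fd_G$ is $O(n^4\cdot n^2(D_1+D_2)^2)=O(n^6(D_1+D_2)^2)$. The terms $s\,d\ln d$ are dominated by these. The contribution of $h(\ell_i)\le h(C_{V_i})$ is negligible. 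Adding the three contributions gives $O(n^6(D_1+D_2)(H_1+H_2+D_1+D_2))$, as claimed. The delicate points are keeping the product-height estimate for $P$ within $O(H_1+H_2+n^2(D_1+D_2))$ and treating the empty case. When $V_i$ is empty, $C_{V_i}=1$ and the corresponding gcd condition is trivial; I would set $\Delta_i=1$ there.
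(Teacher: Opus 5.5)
Your proposal is correct and follows the paper's proof essentially step by step. You bound the inputs via Lemma~\ref{lemma:hchow}, apply Lemma~\ref{lemma:gcd2} to $(P,\partial P/\partial U_{0,0})$ and to each $(C_{V_i},\partial C_{V_i}/\partial U_{0,0})$ (whose GCD over $\Q$ is $1$ by Lemma~\ref{lemma:gcdeq1}), and handle the final exact division with Lemma~\ref{lemma:divmodp}.

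The differences are cosmetic:
\begin{itemize}
\item Your extra factor $\ell_1\ell_2$ is redundant. It is the minimal denominator of $P$ (Gauss' lemma), so it already divides the integer $\delta$ of Lemma~\ref{lemma:gcd2}.
\item Your intermediate $O(N\cdot\deg)$ product-height term does not match the $O(H_1+H_2+n^2(D_1+D_2))$ you state; the standard bound has $\ln(N+1)$ in place of $N$. Even the cruder estimate still gives the same final bound.
\item Your explicit treatment of the empty case fills a small point the paper leaves implicit. There $\partial C_{V_i}/\partial U_{0,0}=0$, so Lemma~\ref{lemma:gcd2} does not literally apply.
\end{itemize}
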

\begin{proof}
  We first give conditions that guarantee that the first two items
  hold. The input polynomials $C_{V_1},C_{V_2}$ have respective
  degrees at most $(r+1)D_1$ and $(r+1)D_2$, and respective heights at
  most $H_1+(r+1)\ln(n+2) D_1$ and $H_2+(r+1)\ln(n+2) D_2$
  (Lemma~\ref{lemma:hchow}).

  The product $P=C_{V_1}C_{V_2}$ has degree at most $(r+1)(D_1+D_2)$
  and height at most $H=H_1+H_2 + (r+1)\ln(n+2) (D_1+D_2) +
  \ln((r+1)(n+1)+1) (D_1+D_2)$ (Lemma~1.2.1.b in~\cite{KrPaSo01}). Its
  partial derivative $\partial P/\partial U_{0,0}$ has the same degree
  bound and height at most $H + \ln(D_1+D_2)$, since $\deg(P, U_{0,0})
  \le D_1 + D_2$. In particular, both polynomials have height in
  $O(n(H_1 + H_2 +D_1 +D_2) \ln(n))$.

  
  Since these are polynomials in $O(n^2)$ variables, by
  Lemma~\ref{lemma:gcd2}, there is a nonzero integer $\Delta_0$ of
  height $O(n^6 (D_1+D_2)(H_1+H_2 + D_1+D_2))$ such that if $p$ does
  not divide $\Delta_0$, $P$ and its partial derivative have
  coefficients in $\Z_{(p)}$ and the GCD computation at step 2
  commutes with reduction modulo $p$. Note that since both $C_{V_1}$
  and $C_{V_2}$ have leading coefficient $1$, their product $P$ having
  coefficients in $\Z_{(p)}$ implies that this is also the case for
  $C_{V_1}$ and $C_{V_2}$ themselves.

  Then, Lemma~\ref{lemma:divmodp} guarantees that the division done at
  step 3 commutes with reduction modulo $p$ as well (note that $Q$
  being a monic GCD, it cannot vanish modulo $p$). Altogether, if $p$
  does not divide $\Delta_0$, the first two items hold.
  
  Finally, we give conditions to guarantee the third item.  We know
  from Lemma~\ref{lemma:gcdeq1} that $\gcd(C_{V_1},\partial
  C_{V_1}/\partial U_{0,0})=1$. Apply Lemma~\ref{lemma:gcd2} again; we
  obtain the existence of a nonzero integer $\Delta_1$, with the same
  asymptotic height bound as $\Delta_0$, such that if $p$ does not
  divide $\Delta_1$, $C_{V_1}$ has coefficients in $\Z_{(p)}$ (which
  we saw also derives from $p$ not dividing $\Delta_0$) and
  $\gcd(C_{V_1} \bmod p,\partial C_{V_1}/\partial U_{0,0} \bmod p)=1$.
  Repeat this for $C_{V_2}$, defining a nonzero integer $\Delta_2$,
  and set $\Delta^{{\rm union}}_{V_1,V_2}=\Delta_0 \Delta_1 \Delta_2$.
\end{proof}


\subsection{Scalar extension and generic projection} \label{sec:proj}

Consider an $r$-equidimensional algebraic set $V$, defined over a
field $\K$, and new variables $\ell_{i,1},\dots,\ell_{i,n}$,
$i=0,\dots,r$.  Let $\L$ be an algebraic closure of
$\K(\ell_{0,1},\dots,\ell_{r,n})$, and let $V_{\L} \subset \L^n$ be
the extension of $V$ over $\L$, that is, the zero-set of the ideal
$I(V)\cdot \L[X_1,\dots,X_n]$. The following easy observations will be
used in Subsection~\ref{ssec:sepgal}.


\begin{lemma}\label{lemma:extension}
  Let $I(V)=\cap_i P_i$ in $\Kbar[X_1,\dots,X_n]$ be the minimal
  decomposition of $I(V)$ into prime ideals. Then:
  \begin{itemize}
  \item we have $I(V_\L)=I(V)\cdot \L[X_1,\dots,X_n]$ and its minimal
    primary decomposition is $ \cap_i (P_i \cdot \L[X_1,\dots,X_n])$,
    with all ideals on the right-hand side prime.
  \item for all $i$, if $C$ is a Chow form of $V(P_i)\subset \Kbar^n$,
    then $C$ is also a Chow form of $V(P_i \cdot \L[X_1,\dots,X_n]) \subset \L^n$.
  \end{itemize}
\end{lemma}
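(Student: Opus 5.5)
The first item reduces to two standard facts: extending the field does not change radical ideals defined over $\Kbar$, and it preserves primality when $\Kbar$ is algebraically closed. For the radical part, write $J=I(V)\cdot \L[X_1,\dots,X_n]$. I would show $J$ is radical and that $J=\cap_i (P_i\cdot\L[X_1,\dots,X_n])$; then the Nullstellensatz over $\L$ gives $I(V_\L)=\sqrt J = J$.

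For primality, I would use that $\Kbar$ is algebraically closed, so each quotient $\Kbar[X]/P_i$ is a domain whose fraction field $F_i$ is a regular extension of $\Kbar$. Then $F_i\otimes_{\Kbar}\L$ is a domain, by the standard fact that the tensor product of a domain over an algebraically closed field with any field extension is a domain. Since $\L[X]/(P_i\L[X]) = (\Kbar[X]/P_i)\otimes_{\Kbar}\L$ injects into $F_i\otimes_{\Kbar}\L$ (tensoring over a field is exact), each $P_i\L[X]$ is prime.

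For the intersection, I would use flatness of $\L[X]$ over $\Kbar[X]$, which holds because $\L$ is free over $\Kbar$. Flatness commutes with finite intersections, so
\[J=\bigl(\cap_i P_i\bigr)\L[X]=\cap_i P_i\L[X],\]
an intersection of primes, hence radical. Minimality and irredundancy are preserved because $P_i\L[X]\cap\Kbar[X]=P_i$. This contraction holds by faithful flatness: choose a $\Kbar$-basis of $\L$ containing $1$ and expand coefficients along it. So if $P_i\L[X]\subseteq P_j\L[X]$, contracting gives $P_i\subseteq P_j$.

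For the second item, let $W=V(P_i)$ and $W_\L=V(P_i\L[X])$. By the first item, $W_\L$ is irreducible. It has the same dimension and degree as $W$, since Hilbert polynomials are invariant under field extension. The step needing the most care is showing that $C$ vanishes exactly on the hypersurface associated with $W_\L$. I would argue as follows.

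By Fact~\ref{fact:unionchow}, $C$ is irreducible over $\Kbar$. Consider the incidence variety $\{(x,\bm u): x\in W,\ \text{the equations \eqref{eq:chow} hold}\}$, which is defined over $\Kbar$. The closure of its projection, $\overline{\Delta_W}$, is cut out by an ideal in $\Kbar[\bm U]$. Elimination, meaning the computation of projection closures via elimination ideals, commutes with flat base change, so the closure of $\Delta_{W_\L}$ is defined by $(C)\cdot\L[\bm U]$.

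It remains to check that $C$ is still squarefree in $\L[\bm U]$. This holds because $C$ is irreducible over the algebraically closed field $\Kbar$, hence absolutely irreducible; alternatively, one can apply the first item again to the prime ideal $(C)$. Therefore $C$ is a squarefree defining polynomial of $\overline{\Delta_{W_\L}}$, that is, a Chow form of $W_\L$.
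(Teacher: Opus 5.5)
Your proof is correct and follows essentially the same route as the paper. You use flatness of $\L[X_1,\dots,X_n]$ over $\Kbar[X_1,\dots,X_n]$ for the intersection, the contraction $P_i\L[X_1,\dots,X_n]\cap\Kbar[X_1,\dots,X_n]=P_i$ for minimality, a standard base-change fact over an algebraically closed field for primality, and compatibility of elimination ideals with field extension for the Chow form. The one small variation is in the second item: the paper notes that $P_i+(U_{j,0}+U_{j,1}X_1+\cdots+U_{j,n}X_n)_j$ is prime, so $C$ generates the elimination ideal exactly. You instead work set-theoretically, implicitly using $\sqrt{E}=(C)$ for the elimination ideal $E$ over $\Kbar$, which follows from the Nullstellensatz since $C$ is squarefree. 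You then check separately that $C$ stays irreducible over $\L$.
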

\begin{proof}
  Set $P'_i=P_i \cdot \L[X_1,\dots,X_n]$ for all $i$.  The fact that
  all $P'_i$ remain radical
  is~\cite[Lemma~10.44.4]{stacks-project}. On another hand, it follows
  from~\cite[Lemma~10.47.2]{stacks-project} that for all $i$,
  $\L[X_1,\dots,X_n]/P'_i$ has only one minimal prime, and since it is
  reduced, it is a domain. This shows that all $P'_i$ are prime.
  Since $\L[X_1,\dots,X_n]$ is a free, hence flat,
  $\Kbar[X_1,\dots,X_n]$-algebra, extension commutes with
  intersection~\cite[Theorem~7.4.(i)]{Matsumura86}. This shows
  $I(V)\cdot \L[X_1,\dots,X_n]= \cap_i P'_i$, so this ideal is
  radical; it must then be equal to $I(V_\L)$.  Because the extension
  is also faithfully flat, $P'_i \cap \Kbar[X_1,\dots,X_n]=P_i$ for
  all $i$, so the $P'_i$ are pairwise distinct, and no $P'_i$ is
  contained in the intersection of the other ones. This shows that
  their intersection is the minimal primary decomposition of
  $I(V_\L)$.

  For the second item: a Chow form $C$ of $V(P_i)$ generates the
  elimination ideal $(P_i + (U_{j,0} + U_{j,1}X_1 + \cdots +
  U_{j,n}X_n)_{0 \le j \le r}) \cap \Kbar[\bm U_0,\dots,\bm U_r]$ (to
  verify this, note that this ideal is prime, since $P_i$ is, and each
  new equation has new leading term $U_{j,0}$), and this remains the
  case over~$\L$.
\end{proof}


The indeterminates $(\ell_{i,j})$ are used as coefficients in linear
forms $L_i= \ell_{i,1} X_1 + \cdots + \ell_{i,n} X_n$,
$i=0,\dots,r$. This allows us to define the generic projection
\[\pi: \bm x = (x_1,\dots,x_n) \mapsto (L_0(\bm x),\dots,L_r(\bm x)).\]

The image $\pi(V_{\L})$ is a Zariski-closed set in $\L^{r+1}$, which
is defined over $\K(\ell_{0,1},\dots,\ell_{r,n})$.  Our goal is to
compute a polynomial in
$\K[\ell_{0,1},\dots,\ell_{r,n}][X_1,\dots,X_n]$ whose zero-set in
$\L^n$ is the cylinder $\pi^{-1}(\pi(V_{\L}))$.

\medskip\noindent{\sc Generic\_Projection\_Chow}\vspace{-1mm}
\begin{algorithmic}[1]
  \Require polynomial $C$ in $\K[\bm U_0,\dots,\bm U_r]$, with $\bm U_i= U_{i,0},\dots,U_{i,n}$  for all $i$
  \Ensure polynomial $F$ in $\K[\ell_{0,1},\dots,\ell_{r,n}][X_1,\dots,X_n]$.
  \State \Return $C(L_0,-\ell_{0,1},\dots,-\ell_{0,n},\dots,L_r,-\ell_{r,1},\dots,-\ell_{r,n})$
\end{algorithmic}

\begin{lemma}\label{lemma:genpchow}
  Suppose that $V$ is either empty or $r$-equidimensional and defined
  over a field $\K$, and let $V_{\L}$ be the zero-set of $I(V)$ over
  an algebraic closure $\L$ of
  $\K(\ell_{0,1},\dots,\ell_{r,n})$. Given a Chow form $C$ of $V$,
  {\sc Generic\_Projection\_Chow} returns a polynomial $F$ such 
  that $V(F)=\pi^{-1}(\pi(V_{\L}))$ in~$\L^{n}$.
\end{lemma}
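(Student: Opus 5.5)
We show that both sets have the same points, working pointwise in $\L^n$. Fix $\bm x=(x_1,\dots,x_n)\in\L^n$ and write $\bm y=\pi(\bm x)=(L_0(\bm x),\dots,L_r(\bm x))$. By construction,
\[F(\bm x)=C(\bm u_0,\dots,\bm u_r), \quad \bm u_i=(L_i(\bm x),-\ell_{i,1},\dots,-\ell_{i,n}).\]
The linear equation attached to $\bm u_i$ evaluated at a point $\bm z\in\L^n$ is
\[L_i(\bm x)-\ell_{i,1}z_1-\cdots-\ell_{i,n}z_n = L_i(\bm x)-L_i(\bm z).\]
So the $r+1$ affine equations attached to $\bm u_0,\dots,\bm u_r$ have common zero set exactly the fibre $\pi^{-1}(\bm y)$. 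By Lemma~\ref{lemma:extension}, $C$ is also a Chow form of $V_\L$. The statement therefore reduces to showing that $C(\bm u)=0$ if and only if $V_\L$ meets this fibre. (If $V$ is empty, $C$ is a nonzero constant, $F$ has no zeros, and both sides are empty.)

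We use the projective characterization of Chow forms (the first Fact after the definition): $C(\bm u)=0$ if and only if there is $(z_0:\cdots:z_n)\in V_\L^{\rm proj}$ with $L_i(\bm x)z_0-\ell_{i,1}z_1-\cdots-\ell_{i,n}z_n=0$ for all $i$.

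\emph{Points at finite distance.} A point with $z_0\neq 0$ is exactly a point $\bm z\in V_\L$ with $\pi(\bm z)=\pi(\bm x)$, that is, $\bm x\in\pi^{-1}(\pi(V_\L))$. This immediately gives the inclusion $\pi^{-1}(\pi(V_\L))\subset V(F)$.

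\emph{Points at infinity.} For the converse we must rule out solutions with $z_0=0$. Such a solution is a point at infinity of $V_\L^{\rm proj}$ whose direction $(z_1,\dots,z_n)$ is killed by the $r+1$ generic linear forms $\ell_{i,1}Z_1+\cdots+\ell_{i,n}Z_n$. The set of directions at infinity, $V_\L^{\rm proj}\cap V(X_0)$, has dimension at most $r-1$. It is also defined over $\Kbar$, since it is the base change of $V^{\rm proj}\cap V(X_0)$. Its intersection with $r+1$ hyperplanes whose coefficients are algebraically independent over $\Kbar$ is therefore empty.

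The main obstacle is justifying this genericity cleanly. The plan is an incidence-variety dimension count over $\Kbar$. Let $Z$ be the set of pairs $(\bm z,\bm\ell)$ with $\bm z\in V^{\rm proj}\cap V(X_0)$ and $\bm\ell$ killing $\bm z$. Its fibre over each $\bm z$ has codimension $r+1$ in the $\bm\ell$-space, so $\dim Z\le (r-1)+(r+1)n-(r+1)<(r+1)n$. Hence the projection of $Z$ to $\bm\ell$-space is contained in a proper closed subset defined over $\Kbar$. The generic point $(\ell_{i,j})$ lies outside any such subset, so no solution with $z_0=0$ exists.

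This gives $V(F)\subset\pi^{-1}(\pi(V_\L))$, and together with the inclusion above, equality. Finally, the description $V(F)=\pi^{-1}(\pi(V_\L))$ requires that $\pi(V_\L)$ is closed, as asserted before the algorithm. The same emptiness at infinity gives this: $\pi$ restricted to $V_\L$ is then finite, hence closed.
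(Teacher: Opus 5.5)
Your proof is correct. Its skeleton matches the paper's: you rewrite $F(\bm x)$ as $C$ evaluated at the forms $L_i(\bm x)-L_i(\bm Z)$, transfer to $V_\L$ via Lemma~\ref{lemma:extension}, apply the projective characterization of Chow forms, and then exclude solutions with $z_0=0$. You differ only in how you exclude those points at infinity.

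The paper does it in one line, with no dimension theory. Suppose $(0:y_1:\cdots:y_n)\in V_\L^{\rm proj}$ satisfies $\ell_{i,1}y_1+\cdots+\ell_{i,n}y_n=0$ for all $i$. Then this point also lies on the hyperplanes $\ell_{i,0}X_0+\ell_{i,1}X_1+\cdots+\ell_{i,n}X_n=0$ for arbitrary constant terms $\ell_{i,0}$, for instance new indeterminates. The same characterization then gives $C(\ell_{0,0},\dots,\ell_{r,n})=0$ at an algebraically independent point, which contradicts $C\neq 0$.

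Your incidence-variety count reaches the same conclusion but needs more input:
\begin{itemize}
\item $\dim(V^{\rm proj}\cap V(X_0))\le r-1$, because no component of $V^{\rm proj}$ lies at infinity;
\item projective closure commutes with base change to $\L$;
\item a transfer step showing that an $\L$-point of your incidence set forces $(\ell_{i,j})$ into the closure, defined over $\Kbar$, of its projection.
\end{itemize}
All of these are standard and you handle them correctly. The paper's trick is shorter because it uses only the nonvanishing of $C$. Your count is more geometric and shows slightly more: even $r$ generic linear forms would miss the part at infinity.

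Your closing remark, that emptiness at infinity makes $\pi|_{V_\L}$ finite and hence $\pi(V_\L)$ closed, is valid. It usefully justifies a claim the paper makes before the algorithm without proof. However, the lemma does not need it: your pointwise argument already identifies $V(F)$ with $\pi^{-1}$ of the set-theoretic image $\pi(V_\L)$.
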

\begin{proof}
  When $V$ is empty, there is nothing to prove, since $C$ and thus $F$
  are nonzero constants. Otherwise, the result is proved in Proposition~15
  of~\cite{BlJeSo04}, but we can repeat the short argument using the
  notation of this paper. A point $\bm x=(x_1,\dots,x_n) \in \L^n$
  cancels $F$ if and only if $(\ell_{0,1} x_1 + \cdots + \ell_{0,n}
  x_n,-\ell_{0,1},\dots,-\ell_{0,n},\dots,\ell_{r,1} x_1 + \cdots +
  \ell_{r,n} x_n,-\ell_{r,1},\dots,-\ell_{r,n})$ cancels $C$. Now, $C$
  is also a Chow form of $V_{\L}$, so if we denote by $V_{\L}^{\rm
    proj}$ the projective closure of $V_{\L}$, this is the case if and
  only if there exists $\bm y=(y_0 : \cdots : y_n)$ in $V_{\L}^{\rm
    proj}$ such that
  \begin{align*}
    y_0(\ell_{0,1} x_1 + \cdots + \ell_{0,n} x_n) &= \ell_{0,1} y_1+ \cdots + \ell_{0,n}y_n \\
&    \dots \\
    y_0(\ell_{r,1} x_1 + \cdots + \ell_{r,n} x_n) &= \ell_{r,1} y_1+ \cdots + \ell_{r,n}y_n.
  \end{align*}
  If $y_0=0$, all $y_1 \ell_{i,1} + \cdots + y_n \ell_{i,n}$ vanish,
  and then so do all $y_0 \ell_{i,0} + y_1 \ell_{i,1} + \cdots + y_n
  \ell_{i,n}$; this implies $C(\ell_{0,0},\dots,\ell_{r,n})=0$, a
  contradiction. Thus, $F(\bm x)=0$ if and only if there exists $\bm
  y=(y_1,\dots,y_n)$ in $V_{\L}$ such that $\ell_{i,1} x_1 + \cdots +
  \ell_{i,n} x_n=\ell_{i,1} y_1 + \cdots + \ell_{i,n} y_n$ for all
  $i$, that is, such that $\pi(\bm x)=\pi(\bm y)$.
\end{proof}

\begin{remark}
  Taking coefficients of $G_{\L}$ with respect to
  $\ell_{0,1},\dots,\ell_{s,n}$ gives us a family of polynomials in
  $\K[X_1,\dots,X_n]$ whose zero-set is $W$. See also a related
  construction in~\cite{BlJeSo04}, where random linear forms are used
  instead.
\end{remark}

Note that there is no need here for $V$ to be in normal position, even
though the normal position assumption will hold when we use this
procedure. For the following lemma, though, we will make the normal position
assumption.

\begin{lemma}\label{lemma:hgenp}
  Suppose that $V$ is an algebraic set in $\C^n$, defined over $\Q$,
  either empty or $r$-equidimensional and in normal position, with
  degree at most $D$ and height at most $H$. Then
  $\textsc{Generic\_Projection\_Chow}(C_V)$ is a polynomial of degree
  at most $D$ in $X_1,\dots,X_n$ and at most $(r+1)D$ in
  $\ell_{0,1},\dots,\ell_{r,n}$, and height $O(H + n D \ln(n))$.
\end{lemma}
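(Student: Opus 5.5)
The plan is to bound degrees and heights directly from the substitution defining $F=\textsc{Generic\_Projection\_Chow}(C_V)$. If $V$ is empty, $C_V$ is the constant $1$: the degree claims are trivial and $h(F)=0$. From now on assume $V$ is $r$-equidimensional and in normal position, with $\deg(V)=D'\le D$.

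\emph{Degrees.} $C_V$ is multihomogeneous of degree $D'$ in each block $\bm U_i$. In each block, $U_{i,0}$ is replaced by $L_i$, which has degree $1$ in $X$ and degree $1$ in $\ell$. The other entries $U_{i,j}$ are replaced by $-\ell_{i,j}$, which has degree $0$ in $X$ and degree $1$ in $\ell$.
\begin{itemize}
\item In $X$: a monomial of $C_V$ contributes degree equal to its total degree in the variables $U_{0,0},\dots,U_{r,0}$. This is at most $(r+1)D'$, which is weaker than the claimed bound $D$. To get $D$, I would use the fact that $C_V$ is a polynomial of degree $D'$ in the $(r+1)$-minors of the matrix $(U_{i,j})$ (as in the proof of Lemma~\ref{lemma:leadmon}). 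Each minor is multilinear in the rows, so it involves at most one entry from column $0$. Hence every monomial of $C_V$ has degree at most $D'$ in $U_{0,0},\dots,U_{r,0}$, and $\deg_X F\le D'\le D$.
\item In $\ell$: each $U_{i,j}$ is replaced by a form of degree $1$ in $\ell$, and $C_V$ has total degree $(r+1)D'$. So $F$ is homogeneous of degree $(r+1)D'\le(r+1)D$ in the $\ell$'s.
\end{itemize}

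\emph{Height.} By Lemma~\ref{lemma:hchow}, $h(C_V)\le H+(r+1)\ln(n+2)D$. Write $C_V=C^{\rm primitive}_V/\delta$ with $\delta$ an integer; the proof of Lemma~\ref{lemma:hCV} shows $h(\delta)\le h(C_V)$. Then $F=\tilde F/\delta$, where $\tilde F$ is obtained by substituting into the integer polynomial $C^{\rm primitive}_V$, and $h(F)\le\max(h(\delta),h(\tilde F))$. To bound $h(\tilde F)$:
\begin{itemize}
\item $C^{\rm primitive}_V$ has at most $\binom{n+D'}{D'}^{r+1}$ monomials, so its number of terms has logarithm $O(nD\ln n)$ (using $r\le n$).
\item Each monomial becomes a product of at most $(r+1)D'$ linear forms. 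The linear form $L_i$ has $n$ terms with coefficients $1$, so each such product has coefficients whose absolute values sum to at most $n^{(r+1)D'}$.
\item Summing over monomials gives $h(\tilde F)\le h(C_V)+O(nD\ln n)$. Alternatively, one can cite the substitution or composition bound of~\cite[Lemma~1.2]{KrPaSo01}.
\end{itemize}
This yields $h(F)\in O(H+(r+1)\ln(n+2)D+nD\ln n)=O(H+nD\ln n)$.

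\emph{Main obstacle.} The step that needs care is the sharp $X$-degree bound $D$ rather than $(r+1)D$. It relies on the bracket-polynomial representation of Chow forms: at most one column-$0$ entry can appear per minor, and there are $D'$ minors per monomial. Everything else is routine bookkeeping of heights.
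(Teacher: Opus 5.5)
Your proposal is correct and follows essentially the same route as the paper: write $C_V=C_V^{\rm primitive}/\delta$ using Lemmas~\ref{lemma:hCV} and~\ref{lemma:hchow}, substitute into the integer numerator, and bound its height. The paper simply invokes \cite[Lemma~1.2.1.c]{KrPaSo01} for that last step, and your term-counting argument reproduces it by hand. Your bracket-polynomial justification of the $X$-degree bound $D$, which the paper calls clear, is a useful addition. One correction there: each minor contributes at most one column-$0$ entry because every term of a determinant uses each of its columns exactly once (multilinearity in the \emph{columns}); multilinearity in the rows does not give this.
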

\begin{proof}
  The degree bounds are clear. We know that $C_V$ has height at most
  $H'=H + (r+1)D \ln(n+2)$ (Lemma~\ref{lemma:hchow}), and as in the
  proof of that lemma we can write it as the quotient $C^{\rm
    primitive}_V/d$, for some integer $d$ of height at most $H'$, with
  $C^{\rm primitive}_V/d$ the primitive Chow form of $V$, that has integer coefficients of height at
  most $H'$ as well (Lemma~\ref{lemma:hCV}). Evaluating $C_V$ at
  $L_0,-\ell_{0,1},\dots,-\ell_{0,n},\dots,L_r,-\ell_{r,1},\dots,-\ell_{r,n}$
gives us the polynomial $1/d\,
  C^{\rm primitive}_V(L_0,-\ell_{0,1},\dots,-\ell_{0,n},\dots,L_r,-\ell_{r,1},\dots,-\ell_{r,n})$.
  
  We estimate the height of the numerator as a polynomial in
  $\Z[\ell_{0,1},\dots,\ell_{r,n},X_1,\dots,X_n]$.  The polynomial
  $C^{\rm primitive}_V$ has $(r+1)(n+1) \in O(n^2)$ variables, degree at most $(r+1)D$,
  height at most $H'$; the arguments have $(r+2)n \in O(n^2)$
  variables, degree at most two and height zero. It follows from
  Lemma~1.2.1.c of~\cite{KrPaSo01} that the result has height
  $O(H'+ nD\ln(n))$.
\end{proof}


\subsection{Separation by hypersurfaces} 

Given two algebraic sets $V,Y$ in $\Kbar^n$, we define the set-theoretical operation
\[V_{\rm zero},V_{\rm proper} = \textsc{Separate}(V,Y)\]
by letting $V_{\rm zero}$ be the union of the irreducible components
of $V$ contained in $Y$, and $V_{\rm proper}$ be the union of all
other irreducible components of $V$ (either one of these two sets may
be empty).

In this paragraph, we discuss the situation where
$V$ is equidimensional and in normal position and $Y=V(G)$ is a
hypersurface.  We recall a procedure due to Jeronimo {\it et
  al.}~\cite{JeKrSaSo04} that takes as input $G$ and a normalized Chow
form of $V$ and returns the normalized Chow forms of $V_{\rm zero}$
and $V_{\rm proper}$.  We start with the following subroutine, which
takes as input a polynomial $G$ in variables $X_1,\dots,X_n$ over a
ring $\A$, and a homogenization variable $X_0$; the output is the
homogenization of $G$.

\medskip\noindent{\sc Homogenize}\vspace{-1mm}
\begin{algorithmic}[1]
  \Require polynomial $G$ in $\A[X_1,\dots,X_n]$
  \Require variables $X_0$ and $(X_1,\dots,X_n)$
  \Require integer $d$
  \Ensure homogeneous polynomial in $\A[X_0,X_1,\dots,X_n]$
  \State \Return $\sum_{\bm \alpha \in \N^n, |\bm\alpha| \le d} \coeff(G, X_1^{\alpha_1} \cdots X_n^{\alpha_n}) X_0^{d-|\bm\alpha|} X_1^{\alpha_1} \cdots X_n^{\alpha_n}$
\end{algorithmic}

\medskip

This being said, here is the procedure for separation in the
hypersurface case; it returns {\nondef} when its first input, the
polynomial $C$, is zero (which will not happen when we use it in our
main procedure). As for the union operation, we make the assumption
that $C_V$ and $\partial C_V/\partial U_{0,0}$ have no common factor,
with Lemma~\ref{lemma:gcdeq1} showing that it is always satisfied in
characteristic zero.

\medskip

\medskip\noindent{\sc Separate\_Hypersurface\_Chow}\vspace{-1mm}
\begin{algorithmic}[1]
  \Require polynomial $C$ in $\K[\bm U_0,\dots,\bm U_r]$, with $\bm U_i =U_{i,0},\dots,U_{i,n}$  for all $i$ 
  \Require polynomial $G$ in $\K[X_1,\dots,X_n]$
  \Ensure polynomials $K,L$ in $\K[\bm U_0,\dots,\bm U_r]$ or \nondef
  \State $D \gets \deg(C, U_{r,r})$
  \State $P \gets (-1)^D C(\bm v_0,\dots,\bm v_r)$ with $\bm v_i = U_{i,0}-T_i,U_{i,1},\dots,U_{i,n}$ for all $i$, $T_0,\dots,T_r$ new variables
  \State $G^h \gets \textsc{Homogenize}(G,X_0,(X_1,\dots,X_n), \deg(G))$
  \LeftComment{homogenization of $G$ using a new  variable $X_0$}
  \State $ G[P] \gets G^h \left(\frac{\partial P}{\partial T_0},-\frac{\partial P}{\partial U_{0,1}},\dots,-\frac{\partial P}{\partial U_{0,n}}
  \right )$
  \State $Q \gets \gcd (P, G[P])$
  \State $R \gets P/Q$
  \State \Return $(-1)^{\deg(Q,T_0)}Q(T_0 = \cdots =T_r=0),\ (-1)^{\deg(R,T_0)}R(T_0 = \cdots =T_r=0)$
\end{algorithmic}

 \begin{lemma}\label{lemma:separate}
  Let $V$ be either empty or $r$-equidimensional and in normal
  position, and defined over a field $\K$. Given its normalized Chow
  form $C_V$ and $G$ in $\K[X_1,\dots,X_n]$, and assuming that
  $\gcd(C_V, \partial C_V/\partial U_{0,0})=1$, {\sc
    Separate\_Hypersurface\_Chow} returns the normalized Chow forms of
  respectively $V_{\rm zero},V_{\rm proper}=\textsc{separate}(V,V(G))$.
\end{lemma}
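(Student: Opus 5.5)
Reduce to irreducible components via Lemma~\ref{lemma:normalcharpoly}. Write $V = W_1 \cup \cdots \cup W_t$ with distinct irreducible $r$-dimensional $W_i$, all in normal position by Lemma~\ref{lemma:normal}. Then $C_V=\prod_i C_{W_i}$ and $P=P_V=\prod_i P_{W_i}$. Note that $D=\deg(C,U_{r,r})=\deg(V)$, since the leading monomial is $U_{0,0}^D\cdots U_{r,r}^D$ and every monomial has degree $D$ in $\bm U_r$. Hence step 2 computes $P_V$, the normalized characteristic polynomial, with leading coefficient $1$ by Lemma~\ref{lemma:defcharpoly}.

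The key claim is a per-component dichotomy:
\begin{itemize}
\item if $W_i\subset V(G)$, then $P_{W_i}$ divides $G[P]$;
\item otherwise $P_{W_i}$ does not divide $G[P]$.
\end{itemize}
For this I would use the Poisson-type formula / geometric interpretation of characteristic polynomials from \cite[Section~3]{JeKrSaSo04}. On the hypersurface $P_{W_i}=0$, for generic points, the vector $(\partial P/\partial T_0: -\partial P/\partial U_{0,1}:\cdots)$ is proportional to $(1:x_1:\dots:x_n)$, for the unique point $x\in W_i$ cut out by the affine forms. Concretely, differentiating the identity $P_{W_i}(T_0,\dots)=0$ along its zero set expresses $x_j$ as $-(\partial P_{W_i}/\partial U_{0,j})/(\partial P_{W_i}/\partial T_0)$. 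This is the identity of \cite[Lemma~3.5]{JeKrSaSo04}, which I would invoke rather than reprove.

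From this identity, modulo $P_{W_i}$ the polynomial $G[P]$ equals, up to the nonvanishing factor $\prod_{j\neq i}P_{W_j}$ raised to the $\deg G$ power times $(\partial P_{W_i}/\partial T_0)^{\deg G}$, the quantity $G(x)$. Two hypotheses make that factor harmless:
\begin{itemize}
\item The assumption $\gcd(C_V,\partial C_V/\partial U_{0,0})=1$ gives $\partial P_{W_i}/\partial T_0\not\equiv 0 \bmod P_{W_i}$. Indeed, $\partial/\partial T_0$ of $C(\bm v)$ is $-\partial C/\partial U_{0,0}$ evaluated at $\bm v$, and the substitution $U_{i,0}\mapsto U_{i,0}-T_i$ is an automorphism.
\item The $P_{W_j}$ for $j\ne i$ are coprime to $P_{W_i}$, since the components are distinct.
\end{itemize}
Irreducibility of $P_{W_i}$ (again from the automorphism, Fact~\ref{fact:unionchow}) means $P_{W_i}\mid G[P]$ iff $G[P]$ vanishes on the generic point of $V(P_{W_i})$. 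That holds iff $G$ vanishes on a dense set of $W_i$, i.e. iff $W_i\subset V(G)$.

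Since $P$ is squarefree, the dichotomy gives $Q=\gcd(P,G[P])=c\prod_{W_i\subset V(G)}P_{W_i}$. Each $P_{W_i}$ is monic for the ordering of Lemma~\ref{lemma:defcharpoly}, so $c=1$, and then $R=\prod_{W_i\not\subset V(G)}P_{W_i}$. By Lemma~\ref{lemma:normalcharpoly}, $Q=P_{V_{\rm zero}}$ and $R=P_{V_{\rm proper}}$, with the empty case giving the constant $1$. Finally, setting all $T_i=0$ in $P_{Y}=(-1)^{\deg Y}C_Y(\bm v)$ gives $(-1)^{\deg Y}C_Y$. Moreover $\deg(P_Y,T_0)=\deg Y$, because the leading monomial is $T_0^{\deg Y}\cdots$ and $P_Y$ is homogeneous of degree $\deg Y$ in $\bm U'_0$. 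So the sign correction returns exactly $C_{V_{\rm zero}}$ and $C_{V_{\rm proper}}$. The case $V$ empty is trivial: $C=1$, $P=1$, and both outputs are $1$ (for $V_{\rm zero}=V_{\rm proper}=\emptyset$).

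The main obstacle is the divisibility criterion, specifically making rigorous that modulo $P_{W_i}$ the gradient components recover the point coordinates. I would first try to derive it directly. Parametrize the generic point of $V(P_{W_i})$ by choosing $x\in W_i$ and generic $\bm U$ with $T_k=\sum_j U_{k,j}x_j$ (with $x_0=1$). Differentiating $P_{W_i}(\sum_j U_{k,j}x_j,\bm U)=0$ in $U_{0,j}$ yields $\partial P/\partial U_{0,j}+x_j\,\partial P/\partial T_0=0$ there. This works provided $\partial P_{W_i}/\partial T_0$ is not identically zero on that locus, which is exactly what the gcd hypothesis guarantees.
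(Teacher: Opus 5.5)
Your proof is correct. Its skeleton is the paper's: $P=P_V$ is monic, $Q$ is identified with the normalized characteristic polynomial of $V_{\rm zero}$, $R$ follows from Lemma~\ref{lemma:normalcharpoly}, and the outputs come from setting $T_i=0$ with the sign fixed by $\deg(P_Y,T_0)=\deg Y$. The difference is in the key step.

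\textbf{What the paper does.} From $\gcd(C_V,\partial C_V/\partial U_{0,0})=1$ it proves only that $\gcd(P,\partial P/\partial T_0)=1$, i.e.\ that $\partial P/\partial T_0$ vanishes identically on no component of $V(P)$. It then defers to the proof of \cite[Lemma~3.9]{JeKrSaSo04} for the fact that $\gcd(P,G[P])$ is a characteristic polynomial of $V_{\rm zero}$. It justifies leaving characteristic zero with a one-line remark: that nonvanishing is the only thing characteristic zero supplies in the cited proof.

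\textbf{What you do instead.} You prove the per-component dichotomy directly. You use the map $(\bm x,\bm U)\mapsto\bigl((\sum_{j=0}^n U_{k,j}x_j)_{0\le k\le r},\bm U\bigr)$, with $x_0=1$, from $W_i\times\Kbar^{(r+1)(n+1)}$ into $V(P_{W_i})$. On its image you have the differentiated identity $\partial P_{W_i}/\partial U_{0,j}+x_j\,\partial P_{W_i}/\partial T_0=0$. You then cancel the factor $\bigl(\prod_{j\ne i}P_{W_j}\,\partial P_{W_i}/\partial T_0\bigr)^{\deg G}$ in the domain of functions on the irreducible set $W_i\times\Kbar^{(r+1)(n+1)}$. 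This argument is characteristic-free, so it substantiates the paper's remark rather than relying on it. Your two nonvanishing conditions are exactly the paper's single one, since $\partial P/\partial T_0\equiv \prod_{j\ne i}P_{W_j}\,\partial P_{W_i}/\partial T_0 \bmod P_{W_i}$.

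\textbf{What each approach buys.} The paper's version is shorter and reuses the literature. Yours is self-contained. It also needs no lemma of \cite{JeKrSaSo04} for the gradient identity, since your last paragraph proves it.

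\textbf{Small tidy-ups.}
\begin{itemize}
\item Density of the image of your map in $V(P_{W_i})$ follows from the definition of $C_{W_i}$ as a defining polynomial of the closure of $\Delta_{W_i}$, transported by the automorphism $U_{k,0}\mapsto U_{k,0}-T_k$. No dimension count is needed.
\item Drop the phrase ``the unique point $x\in W_i$ cut out by the affine forms''. Uniqueness would require birationality of generic projections, which takes more care in positive characteristic. Your parametrization argument never uses it.
\item The differentiated identity holds unconditionally. The gcd hypothesis is used only in the cancellation step.
\item The $P_{W_i}$ have coefficients in $\Kbar$, so run the gcd argument over $\Kbar$. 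This is harmless because monic GCDs do not depend on the field.
\end{itemize}
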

 \begin{proof} The case where $V$ is empty is clear (since then $C=P=Q=R=1$).
   Else, let us first establish that
   the polynomials $P$ and $\partial P/\partial T_0$ have constant GCD.

   Suppose this is not the case, so there is an irreducible factor $Q$
   of $P$ that divides $\partial P/\partial T_0$. By construction, the
   irreducible factors of $P$ are in one-to-one correspondence with
   those of $C_V$, and in particular $P$ has no multiple factor (since
   $C_V$ does not). This implies that $Q$ must divide $\partial
   Q/\partial T_0$, and thus that $\partial Q/\partial T_0=0$. On the
   other hand, we know that $Q=(\pm 1) B(\bm v_0,\dots,\bm v_r)$, with
   $B$ an irreducible factor of $C_V$ and $\bm v_i =
   U_{i,0}-T_i,U_{i,1},\dots,U_{i,n}$ for all $i$.  This gives
   $\partial Q/\partial T_0 = (\pm 1) \partial B/\partial U_{0,0} (\bm
   v_0,\dots,\bm v_r)$, so $\partial B/\partial U_{0,0}=0$. But then
   $B$ is a common factor to $C_V$ and $\partial C_V/\partial
   U_{0,0}$, in contradiction with our assumption $\gcd(C_V, \partial
   C_V/\partial U_{0,0})=1$.

   The fact that $\gcd(P,\partial P/\partial T_0)=1$ is equivalent to
   saying that $\partial P/\partial T_0$ does not vanish identically
   on any irreducible component of $V(P)$. Knowing this, the proof
   of~\cite[Lemma~3.9]{JeKrSaSo04} establishes that $Q=\gcd (P, G[P])$
   is a characteristic polynomial of $V_{\rm zero}$ (that reference
   assumes characteristic zero, where such issues do not arise).
   Since $V$ is in normal position, it is also the case for $V_{\rm
     zero}$ (Lemma~\ref{lemma:normal}), so the latter admits a unique
   normalized characteristic polynomial. Since both normalized
   characteristic polynomial and monic GCDs have leading coefficient
   $1$ (the former by Lemma~\ref{lemma:defcharpoly}), we see that $Q$
   is actually this normalized characteristic polynomial.

   On the other hand, $P$ itself is the normalized characteristic
   polynomial of $V$. It follows from Lemma~\ref{lemma:normalcharpoly}
   that $R$ is the normalized characteristic polynomial of $V_{\rm
     proper}$.  Evaluating all $T_i$'s at $0$ then gives us Chow forms
   of $V_{\rm zero}$ and $V_{\rm proper}$. To normalize them, it
   suffices to adjust their sign by $(-1)^{\deg(V_{\rm zero})}$ and
   $(-1)^{\deg(V_{\rm proper})}$, and by
   Lemma~\ref{lemma:defcharpoly}, these degrees are precisely the
   degrees of $Q$ and $R$ in $T_0$.
\end{proof}

 \begin{lemma}\label{lemma:HsepH}
   Suppose that $V$ is an algebraic set in $\C^n$, defined over $\Q$,
   either empty or $r$-equidimensional and in normal position, with
   degree at most $D$ and height at most $H$; suppose also that $G$
   has degree at most $d$ and height at most $h$. There exists a
   nonzero integer $\Delta^{{\rm separate\_H}}_{V,G}$ with
   \[h(\Delta^{\rm separate\_H}_{V,G})
   \in O( n^6 D ( d\ln(d)  + h +  dH + d D ) )\]
   such that if a prime $p$ does not divide $\Delta^{{\rm separate\_H}}_{V,G}$, then:
   \begin{itemize}
   \item $C_V$, $G$ and $K,L=\textsc{Separate\_Hypersurface\_Chow}(C_V, G)$ have coefficients in $\Z_{(p)}$
   \item $\textsc{Separate\_Hypersurface\_Chow}(C_V \bmod p, G \bmod p)=K \bmod p, L \bmod p$
   \item $\gcd(C_V \bmod p, \partial C_V/\partial U_{0,0} \bmod p)=1$.
   \end{itemize}
 \end{lemma}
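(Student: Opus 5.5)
The plan is to follow the template of the proof of Lemma~\ref{lemma:unionH}: bound degrees and heights of every intermediate polynomial in \textsc{Separate\_Hypersurface\_Chow}, apply Lemma~\ref{lemma:gcd2} to the single GCD computation at step~5, and use Lemma~\ref{lemma:divmodp} for the exact division at step~6. The remaining steps (substitution $\bm v_i$, homogenization, evaluation at $T_i=0$, sign adjustments) are ring homomorphisms over $\Z_{(p)}$. They therefore commute with reduction modulo $p$ once the inputs have coefficients in $\Z_{(p)}$. We also need the degrees $\deg(C,U_{r,r})$, $\deg(Q,T_0)$ and $\deg(R,T_0)$ to be preserved. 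This holds because $C_V$, $P$, $Q$ and $R$ are all monic for our graded order, with leading monomials exhibiting these degrees (Lemma~\ref{lemma:defcharpoly}), so the leading monomial survives reduction.

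\textbf{Size bounds.} By Lemma~\ref{lemma:hchow}, $C_V$ has degree at most $(r+1)D$ and height at most $H'=H+(r+1)\ln(n+2)D$. The substitution $U_{i,0}\mapsto U_{i,0}-T_i$ keeps the degree and adds $O(nD)$ to the height, using Lemma~1.2.1.c of~\cite{KrPaSo01} as in Lemma~\ref{lemma:hgenp}; so $P$ has height $O(H+nD\ln n)$ in $O(n^2)$ variables. Its partial derivatives have degree at most $(r+1)D$ and height increased by at most $\ln D$. Substituting them into $G^h$, which has degree $d$, height $h$ and $n+1$ variables, gives $G[P]$ of degree $O(ndD)$. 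Again by Lemma~1.2.1 of~\cite{KrPaSo01}, its height is $O(h+d(H+nD\ln n)+nd D\ln n)$, i.e.\ $O(h+dH+ndD\ln n)$.

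\textbf{The GCD step.} Feeding these bounds into Lemma~\ref{lemma:gcd2} with $s=O(n^2)$ yields an integer $\Delta_0$ with the following property: if $p\nmid\Delta_0$, then $P$, $G[P]$ and $Q$ lie in $\Z_{(p)}$ and $\gcd(P\bmod p,G[P]\bmod p)=Q\bmod p$. The dominant terms are $s\,d_P h_{G[P]}+s\,d_{G[P]}h_P+s^2d_Pd_{G[P]}$, which are $O(n^2\cdot nD\cdot(h+dH+ndD))+O(n^4\cdot nD\cdot ndD)$. This fits inside the claimed $O(n^6D(d\ln d+h+dH+dD))$, modulo absorbing the $\ln n$ factors. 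Doing this bookkeeping carefully so that the exponent stays at $n^6$ is where I expect the main friction. The $\ln(n)$ terms must be absorbed using the slack in the powers of $n$. The $d\ln d$ term comes from the $sd_F\ln d_F$ parts of Lemma~\ref{lemma:gcd2}.

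For the reduction of $C_V$ itself: since $C_V$ is monic and $P$ is obtained from it by an invertible substitution over $\Z$, $P\in\Z_{(p)}$ forces $C_V\in\Z_{(p)}$. We also need $G\in\Z_{(p)}$; since $G$ is integral this is automatic, or we include its leading coefficient in $\Delta$ to be safe. Since $Q$ is monic it does not vanish mod $p$, so Lemma~\ref{lemma:divmodp} shows that $R=P/Q$ has coefficients in $\Z_{(p)}$ and reduces to $(P\bmod p)/(Q\bmod p)$. This gives the first two items.

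For the third item, Lemma~\ref{lemma:gcdeq1} gives $\gcd(C_V,\partial C_V/\partial U_{0,0})=1$ over $\Q$. Applying Lemma~\ref{lemma:gcd2} to this pair gives $\Delta_1$ of height $O(n^6D(H+D))$, as in Lemma~\ref{lemma:unionH}, which guarantees the GCD stays $1$ modulo $p$. We then set $\Delta^{\rm separate\_H}_{V,G}=\Delta_0\Delta_1$.
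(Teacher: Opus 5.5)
Your outline matches the paper's proof. You bound the sizes of $P$ and $G[P]$ and apply Lemma~\ref{lemma:gcd2} to the GCD at step~5; your estimate, dominated by $s^2 d_P d_{G[P]}\in O(n^6dD^2)$, is the right one. You handle step~6 with Lemma~\ref{lemma:divmodp}, read $\deg(Q,T_0)$ and $\deg(R,T_0)$ off the monic leading terms, and add a second integer of height $O(n^6D(H+D))$ for the third item. Deducing $C_V\in\Z_{(p)}$ from $P\in\Z_{(p)}$ is also fine, since $C_V=(-1)^DP(T_0=\cdots=T_r=0)$.

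The one real gap is your treatment of $G$.

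\emph{Degree drop in homogenization.} Step~3 homogenizes in degree $\deg(G)$, a quantity computed from the input, so it is not a fixed ring homomorphism. If $\deg(G\bmod p)=\deg(G)-e$ with $e>0$, then $G^h\bmod p=X_0^{e}\,(G\bmod p)^h$. Your claim that this step commutes with reduction therefore fails in that case. You list the degrees that must be preserved, but you omit $\deg(G)$.

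\emph{Denominator of $G$.} The statement does not assume $G$ integral. The first bullet asks for $G\in\Z_{(p)}$, and the same proof is reused in Lemma~\ref{lemma:Hsep} with the rational polynomial $G_\L$. Including a leading coefficient in $\Delta$ does not control denominators. In any case, your final $\Delta_0\Delta_1$ contains neither factor.

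\emph{Fix.} The paper handles both points at once. It writes $G=G'/g$ with $G'\in\Z[X_1,\dots,X_n]$ and requires $p\nmid g\,\lc(G')$, together with the denominator $c$ of $C_V$. Since the order is graded, $p\nmid\lc(G')$ keeps the degree. These factors have height $O(h+H+nD\ln n)$, so the bound is unaffected.

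Alternatively, you could absorb a degree drop by noting that it only multiplies $G[P]\bmod p$ by $(\partial P/\partial T_0\bmod p)^e$. That factor is coprime to $P\bmod p$ once the third item holds, by the argument in the proof of Lemma~\ref{lemma:separate}, so the monic GCD is unchanged. The denominator $g$ must be excluded regardless.
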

 \begin{proof}
   Let us rewrite $C_V = C'/c$ and $G=G'/g$, with $c,g$ integers,
   $C'=C^{\rm primitive}_V,G'$ with integer coefficients, $c,C'$ of
   height at most $H'=H+(r+1)D\ln(n+2)$
   (Lemmas~\ref{lemma:hCV},~\ref{lemma:hchow}) and $g,G'$ of height at
   most $h$. Our first requirement is that $p$ does not divide $\Delta_0 =
   c\, g\, \lc(G')$. The nonvanishing of the first two factors ensures
   that $C_V$ and $G$ have coefficients in $\Z_{(p)}$, the latter one
   that the degree of $G$, or $G'$, does not drop modulo $p$ (since we
   use a graded order).
   \begin{itemize}
   \item At step 1, the polynomials $C_V$ and $C_V \bmod p$ have the
     same degree in $U_{r,r}$, since $C_V$ has leading term $U_{0,0}^D
     \cdots U_{r,r}^D$.
   \item Step 2 computes $P=P'/c$, with $P'= (-1)^D C'(\bm
     v_0,\dots,\bm v_r)$ of degree at most $(r+1)D$ and with integer
     coefficients. We know that $C'$ has height at most $H'$, so $P'$
     has height at most $H'+ r D$.
   \item At step 3, because the degree of $G$ does not drop modulo
     $p$, we have the equality $(G \bmod p)^h=G^h \bmod p$. Besides,
     $G^h$ can be written as $G^h = {G'}^h/g$, with ${G'}^h$ the
     homogenization of $G'$, which is homogeneous of degree at most $d$, with
     integer coefficients of height at most $h$.
   \item At step 4, we have $G[P]=G'[P]/g= 1/(c^{\deg(G)} g) G'[P']$.
     All partial derivatives of $P'$ have degree at most $(r+1)D$ and
     height at most $H''=H' + r D+\ln(D)$, so the
     polynomial $G'[P']$ has degree at most $(r+1)dD$, and height at
     most $h + d(H'' + \ln(n+2) + \ln((r+1)(n+2))(r+1) D)$, by
     Lemma~1.2.1.c of~\cite{KrPaSo01}.
   \item We can now invoke Lemma~\ref{lemma:gcd2} for the polynomials
     $P'$ and $G'[P']$, with $d_F \in O(nD)$, $h_F \in O(H+n \ln(n)D)$,
     $d_G \in O(ndD)$, $h_G \in O(h+ dH + dn \ln(n)D)$, and $s \in O(n^2)$.
     It shows the existence of a nonzero integer $
     \Delta_1$ with
     \[h(\Delta_1)
     \in O( n^6 D ( d\ln(d) + h + dH + d D ) )\] such that if $p$ does
     not divide $\Delta_0 \Delta_1$, step 5 reduces well modulo $p$,
     showing in particular that the result of this step has
     coefficients in $\Z_{(p)}$.
   \item The quotient at step 6 is then handled by
     Lemma~\ref{lemma:divmodp} (since the GCD $Q$ is monic, it does
     not vanish modulo $p$) and clearing the variables $T_i$ in $Q$
     and $R$ at step 7 raises no difficulty. Since $Q$ and $R$ have
     leading monomial $(T_0 U_{1,1} \cdots U_{r,r})^{\deg(Q,T_0)}$ and
     $(T_0 U_{1,1} \cdots U_{r,r})^{\deg(R,T_0)}$ with leading
     coefficient $1$ (Lemma~\ref{lemma:defcharpoly}), the degrees
     $\deg(Q,T_0)$ and $\deg(R,T_0)$ do not change when we run the
     algorithm modulo $p$.
   \end{itemize}
   It follows that if $p$ does not divide $\Delta_0 \Delta_1$, the
   first two properties in the lemma hold. To establish the last one,
   as in the proof of Lemma~\ref{lemma:unionH}, $p$ should avoid
   dividing a nonzero integer $\Delta_2$ of height $O(n^6 D(D+H))$. We take
   $ \Delta^{\rm separate\_H}_{V,G}=\Delta_0 \Delta_1 \Delta_2$, and the lemma is proved.
 \end{proof}
 

\subsection{Separation in general} \label{ssec:sepgal}

Now, we discuss the situation where both inputs $V,W$ are given by
means of their Chow form, so in particular both $V$ and $W$ are
equidimensional, but we drop the assumption that $W$ be a
hypersurface. The procedure is given below; since it uses
$\textsc{Separate\_Hypersurface\_Chow}$, it may return \nondef, but we
prove below that for legitimate inputs (Chow forms of algebraic sets,
in normal position for the first of them), the output is correct.

\medskip\noindent{\sc Separate\_Chow}\vspace{-1mm}
\begin{algorithmic}[1]
  \Require polynomial $C$ in $\K[\bm U_0,\dots,\bm U_r]$, with $\bm U_i =U_{i,0},\dots,U_{i,n}$  for all $i$ 
  \Require polynomial $D$ in $\K[\bm U_0,\dots,\bm U_s]$, with $\bm U_i =U_{i,0},\dots,U_{i,n}$  for all $i$ 
  \Ensure polynomials $Q,R$ in $\K[\bm U_0,\dots,\bm U_r]$ or \nondef
  \State $G_{\L} \gets \textsc{Generic\_Projection\_Chow}(D)$
  \State \Return $\textsc{Separate\_Hypersurface\_Chow}(C, G_{\L})$
\end{algorithmic}

\begin{lemma}\label{lemma:separategeneral}
  Let $V$ be either empty or $r$-equidimensional and in normal
  position, and let $W$ be either empty or $s$-equidimensional, both defined
  over a field $\K$. Given
  the normalized Chow form $C_V$ of $V$ and a Chow form $D$ of $W$,
  and assuming that $\gcd(C_V, \partial C_V/\partial U_{0,0})=1$, {\sc
    Separate\_Chow} returns the normalized Chow forms of respectively
  $V_{\rm zero},V_{\rm proper}=\textsc{Separate}(V,W)$.
\end{lemma}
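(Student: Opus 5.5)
The plan is to reduce to Lemma~\ref{lemma:separate}, applied over the field $\K(\ell_{0,1},\dots,\ell_{s,n})$, or rather over its algebraic closure $\L$, with the hypersurface $V(G_\L)$ where $G_\L=\textsc{Generic\_Projection\_Chow}(D)$. By Lemma~\ref{lemma:genpchow} applied to $W$, $V(G_\L)=\pi^{-1}(\pi(W_\L))\subset\L^n$, with $\pi$ the generic projection onto $s+1$ coordinates. Three things must then be checked.

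\emph{First,} the hypotheses of Lemma~\ref{lemma:separate} hold for $V_\L$. By Lemma~\ref{lemma:extension}, $C_V$ is still a Chow form of $V_\L$, and the irreducible components of $V_\L$ are the extensions $Z_\L$ of the components $Z$ of $V$. Their Chow forms are unchanged, so normal position, normalization and the condition $\gcd(C_V,\partial C_V/\partial U_{0,0})=1$ all persist (monic GCDs do not depend on the field, by the remark after Definition~\ref{def:prec}). Lemma~\ref{lemma:separate} then says that the output is the pair of normalized Chow forms of the union of the components $Z_\L$ contained in $\pi^{-1}(\pi(W_\L))$, and of the union of the others.

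\emph{Second, and this is the main step,} for an irreducible component $Z$ of $V$ we must show that $Z\subset W$ if and only if $Z_\L\subset\pi^{-1}(\pi(W_\L))$. One direction is trivial: $Z\subset W$ gives $Z_\L\subset W_\L\subset \pi^{-1}(\pi(W_\L))$. For the converse, suppose $Z\not\subset W$ and pick a point $x\in Z\setminus W$ with coordinates in $\Kbar$. Since $\dim W=s$, the fibre through $x$ of the linear projection $\pi$ to $\L^{s+1}$ is a codimension-$(s+1)$ affine subspace with generic (indeterminate) coefficients. It therefore misses $W_\L$ (which has dimension $s$) unless it passes through $x$ and $x\in W$. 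More concretely, the set of $\ell$ such that $\pi_\ell(x)\in \pi_\ell(W)$ is the image of the incidence variety
\[\{(\ell,y): y\in W,\ \ell\cdot(x-y)=0\},\]
which has dimension at most $s+(s+1)n-(s+1)$. This is less than $(s+1)n$, so a generic $\ell$ is not in this image. Hence $\pi(x)\notin\pi(W_\L)$ and $Z_\L\not\subset\pi^{-1}(\pi(W_\L))$.

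The delicate point is making the phrase ``generic $\ell$'' rigorous, since $\ell$ here consists of indeterminates. I would do this by specialization. Since $x$ has $\Kbar$-coordinates, the condition $x\in\pi^{-1}(\pi(W_\L))$ means exactly $G_\L(x)=0$ as a polynomial in the $\ell$'s. The argument of Lemma~\ref{lemma:genpchow} carries out pointwise for specialized $\ell\in\Kbar^{(s+1)n}$ in a nonempty open set. Picking $\ell$ outside the closed image of the incidence variety above then shows that $G_\L(x)\ne0$.

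\emph{Third,} we transfer the conclusion back to $\K$. The normalized Chow forms of $\bigcup_{Z\subset W}Z_\L$ and of its complement in $V_\L$ coincide with those of $V_{\rm zero}$ and $V_{\rm proper}$, again by Lemma~\ref{lemma:extension} and Lemma~\ref{lemma:normal}. The empty cases are handled directly: if $W$ is empty then $D$ and $G_\L$ are nonzero constants and everything is proper, and if $V$ is empty the output is $1,1$. I expect the dimension count in the second step to be the main obstacle; the other steps are bookkeeping.
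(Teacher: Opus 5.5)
Your proposal is correct and has the same structure as the paper's proof. Both reduce to Lemma~\ref{lemma:separate} over $\L$ using $V(G_\L)=\pi^{-1}(\pi(W_\L))$ from Lemma~\ref{lemma:genpchow}, decide containment component by component through a $\Kbar$-point $x\in Z\setminus W$, and transfer back with Lemma~\ref{lemma:extension}.

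The only real difference is the key step $x\notin V(G_\L)$. The paper cites Proposition~3 of~\cite{BlJeSo04}, which says a generic affine projection separates a point from $W$. It then passes rather informally from a point $y\in W_\L$, defined over the indeterminates, to points $z\in W$ for generic specialized $\lambda$. You instead prove the separation directly with the incidence-variety dimension count. You handle the indeterminates by specializing the polynomial $G_\L(x)\in\Kbar[\ell]$. This is self-contained, and it makes the passage from indeterminate to specialized $\ell$ more rigorous than in the paper.

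Two details need tidying.
\begin{enumerate}
\item The projection of the incidence variety to $\ell$-space need not be closed. You should work with its Zariski closure, which is still proper by your bound $(s+1)n-1$.
\item You should make explicit the open set on which the pointwise version of Lemma~\ref{lemma:genpchow} works. It is the complement of those $\lambda$ for which some point $(0:y')$ at infinity of $W^{\rm proj}$ satisfies $\lambda_i\cdot y'=0$ for all $i$. Such $\lambda$ annihilate every coefficient of $D(\mu_0,\lambda_0,\dots,\mu_s,\lambda_s)$, viewed as a polynomial in $\mu$. Since $D\neq 0$, this bad set lies in a proper closed subset. Outside both bad sets, $G_\L(x)(\lambda)\ne 0$, as you claim.
\end{enumerate}
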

\begin{proof}
  Let $\pi$ be as in Section~\ref{sec:proj}, let $W_{\L}$ be zero-set
  of $I(W)$ over an algebraic closure $\L$ of
  $\K(\ell_{0,1},\dots,\ell_{s,n})$, and let
  $G_{\L}=\textsc{Generic\_Projection\_Chow}(D)$. By
  Lemma~\ref{lemma:genpchow}, $V(G_{\L})=\pi^{-1}(\pi(W_{\L}))$, where both left
  and right-hand sides are in $\L^{n}$.

  For $\bm x$ in $\Kbar^n$, we claim that $\bm x$ is in $W$ if and
  only if it is in $V(G_{\L})$ (of course, this does not mean that $W=V(G_\L)$, since
  this latter set lies in $\L^n$). If $\bm x$ is in $W$, it is also in
  $W_\L$, and thus in $V(G_{\L})$. The converse follows from Proposition~3
  of~\cite{BlJeSo04}: given $\bm x$ not in $W$, that proposition
  establishes the existence of a nonempty Zariski-open set $O_{\bm x,W}$
  in $\Kbar^{(n+1)(r+1)}$ such that for
  $(\lambda_{0,0},\dots,\lambda_{r,n})$ in $O_{\bm x,W}$, if we set
  \[\pi_{\lambda}: \bm z \mapsto  (\lambda_{0,0} + \lambda_{0,1} z_n + \cdots + \lambda_{0,n} z_n,
  \dots,\lambda_{r,0} + \lambda_{r,1} z_n + \cdots + \lambda_{r,n}
  z_n),\] then $\pi_\lambda(\bm x)$ is not in $\pi_\lambda(W)$. If
  $\pi(\bm x)$ is in $\pi(W_\L)$, there exist $\bm y =(y_1,\dots,y_n)$ in
  $W_\L$ such that
  \[\ell_{i,1} x_n + \cdots + \ell_{i,n} x_n = \ell_{i,1} y_1 + \cdots + \ell_{i,n} y_n,\quad i=0,\dots,r.\]
  From this, it would follow that for generic $(\lambda_{0,0},\dots,\lambda_{r,n})$
  in $\Kbar^{(n+1)(r+1)}$, there exists $\bm z=(z_1,\dots,z_n)$ in $W$
  with
  \[\lambda_{i,1} x_n + \cdots + \lambda_{i,n} x_n = \lambda_{i,1} z_1 + \cdots + \lambda_{i,n} z_n,\quad i=0,\dots,r,\]
  and thus $\pi_\lambda(\bm x)=\pi_\lambda(\bm z)$, a contradiction. So 
  $\pi(\bm x)$ is not in $\pi(W_\L)$, and by definition of $G_\L$ this means
  that $G_\L(\bm x)\ne 0$.
  
  Now, consider an irreducible component $V'_\L$ of $V_\L$. We saw in
  Lemma~\ref{lemma:extension} that there exists an irreducible
  component $V'$ of $V$ such that $I(V'_\L)=I(V')\cdot
  \L[X_1,\dots,X_n]$. If $V'$ is contained in $W$, it follows that
  $V'_\L$ is contained in $W_\L$, and thus in
  $\pi^{-1}(\pi(W_{\L}))=V(G_{\L})$. If $V'$ is not contained in $W$,
  it contains a point $\bm x \in \Kbar^n$ that does not belong to
  $W$. Then, by the claim in the previous paragraph, $\bm x$ is not in
  $V(G_\L)$, and thus $V'$, as well as $V'_\L$, are not contained in
  $V(G_{\L})$.

  In other words, if we set $Y,Y'=\textsc{Separate}(V_\L,V(G_\L))$,
  $Y$ is the union of all these $V'_\L$ for which $V' \subset W$, and
  $Y'$ is the union of all other $V'_\L$. From the second
  point of Lemma~\ref{lemma:extension}, we deduce that
  their normalized Chow forms are those of 
  respectively $V_{\rm zero},V_{\rm proper}=\textsc{Separate}(V,W)$.

  Since we assume that $V$ is either empty or in normal position with
  $\gcd(C_V, \partial C_V/\partial U_{0,0})=1$, this is also the case
  for $V_\L$, since $C_V$ remains a Chow form for it (this also follows from the
  second point of Lemma~\ref{lemma:extension}). We can then apply
  Lemma~\ref{lemma:separate} to $C_V$ and~$G_\L$, and conclude the proof.
\end{proof}

\begin{lemma}\label{lemma:Hsep}
  Suppose that $V$ and $W$ are algebraic sets in $\C^n$, defined over
  $\Q$, both either empty or $r$-equidimensional
  (resp.\ $s$-equidimensional) and in normal position, with respective
  degrees at most $D_V,D_W$ and heights at most $H_V,H_W$. There
  exists a nonzero integer $\Delta^{{\rm separate}}_{V,W}$ with
  \[h(\Delta^{\rm separate}_{V,W})
  \in O( n^6 D_V ( D_W \ln(D_W) + H_W + D_W H_V +D_V D_W ))\]
  such that if a prime $p$ does not divide $\Delta^{{\rm separate}}_{V,W}$, then:
  \begin{itemize}
  \item $C_V$, $C_W$ and $K,L=\textsc{Separate\_Chow}(C_V, C_W)$ have coefficients in $\Z_{(p)}$
  \item $\textsc{Separate\_Chow}(C_V \bmod p, C_W \bmod p)=K \bmod p, L \bmod p$
   \item $\gcd(C_V \bmod p, \partial C_V/\partial U_{0,0} \bmod p)=1$.
  \end{itemize}
\end{lemma}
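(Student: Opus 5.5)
The plan is to compose Lemma~\ref{lemma:hgenp} with Lemma~\ref{lemma:HsepH}. \textsc{Separate\_Chow} first computes $G_\L=\textsc{Generic\_Projection\_Chow}(C_W)$ and then calls \textsc{Separate\_Hypersurface\_Chow}$(C_V,G_\L)$. The first step is a pure substitution with no divisions or GCDs, so it commutes with reduction modulo $p$ as soon as $C_W$ has coefficients in $\Z_{(p)}$. The second step is controlled by Lemma~\ref{lemma:HsepH}, applied over the larger base ring $\Q[\ell_{0,1},\dots,\ell_{s,n}]$. The bound in the statement should then come out by substitution.

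First, write $C_W=C_W^{\rm primitive}/c_W$ as in the proof of Lemma~\ref{lemma:hgenp}, where $c_W$ is the leading coefficient of $C_W^{\rm primitive}$. By Lemmas~\ref{lemma:hCV} and~\ref{lemma:hchow}, it has height at most $H_W+(s+1)D_W\ln(n+2)$. We require $p\nmid c_W$. Then $C_W$ has coefficients in $\Z_{(p)}$, and since $C_W$ is monic its reduction is the normalized Chow form mod $p$, with the same degrees. Moreover, $\textsc{Generic\_Projection\_Chow}(C_W\bmod p)=G_\L\bmod p$, because evaluation at polynomials with integer coefficients commutes with reduction.

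By Lemma~\ref{lemma:hgenp}, $G_\L$ has degree at most $d:=D_W$ in $X_1,\dots,X_n$, degree at most $(s+1)D_W$ in the $\ell$'s, and height $h\in O(H_W+nD_W\ln n)$.

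Next, we rerun the proof of Lemma~\ref{lemma:HsepH} with $G=G_\L$. The subtlety is that $G_\L$ is a polynomial in $X$ whose coefficients lie in $\Q[\ell]$ rather than $\Q$. The step that needs care is the GCD at step~5, which now takes place in $\Q[\ell,\bm U'_0,\dots,\bm U'_r]$. Its variable count is $O(n^2)$, since $s\le n$, so Lemma~\ref{lemma:gcd2} applies unchanged; the only new input is that the degree of $G'[P']$ grows by the $\ell$-degree $O(nD_W)$. The total degree therefore stays $O(nD_WD_V)$, matching the $d_G$ used in Lemma~\ref{lemma:HsepH}.

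Homogenization at step~3 is done in $X$ alone, with $\deg_X$ fixed to $D_W$. To guarantee that this degree does not drop mod $p$, we replace the factor $\lc(G')$ by a nonzero integer coefficient of some top-$X$-degree term of $c_W G_\L$. Its height is again $O(H_W+nD_W\ln n)$. Alternatively, we can homogenize with the a priori degree $D_W$, which makes the drop irrelevant.

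Substituting $d=D_W$, $h\in O(H_W+nD_W\ln n)$, $D=D_V$ and $H=H_V$ into $O(n^6D(d\ln d+h+dH+dD))$ gives
\[
O\bigl(n^6D_V(D_W\ln D_W+H_W+nD_W\ln n+D_WH_V+D_VD_W)\bigr).
\]
The extra term $nD_W\ln n$ should be absorbed by tightening the estimate. Either we track that the $n\ln n$ height contribution already appears inside the $d\,n\ln(n)D$ terms of Lemma~\ref{lemma:HsepH}, or we accept it within the $O$ since $\ln n\le n$ and the $n^6$ factor is loose. I expect this bookkeeping to be the main obstacle, together with justifying that the GCD and division arguments (Lemmas~\ref{lemma:gcd2} and~\ref{lemma:divmodp}) apply verbatim with the $\ell$'s treated as extra variables.

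Finally, the third item, $\gcd(C_V\bmod p,\partial C_V/\partial U_{0,0}\bmod p)=1$, is obtained exactly as in Lemma~\ref{lemma:HsepH}. We set $\Delta^{\rm separate}_{V,W}=c_W\cdot\Delta^{\rm separate\_H}_{V,G_\L}$ and verify correctness of the output mod $p$ via Lemma~\ref{lemma:separategeneral} when needed.
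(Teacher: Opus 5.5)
Your route is the paper's: rerun the proof of Lemma~\ref{lemma:HsepH} with $G=G_\L$, bound $G_\L$ via Lemma~\ref{lemma:hgenp}, and apply Lemma~\ref{lemma:gcd2} with the $\ell_{i,j}$ as extra variables. However, the step you flag and leave open is the one place where the paper needs a new argument, so as written the proof has a gap there.

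Inside \textsc{Separate\_Chow}, the call to \textsc{Separate\_Hypersurface\_Chow} runs over the coefficient field $\Q(\ell_{0,1},\dots,\ell_{s,n})$. Its monic GCD is therefore normalized with respect to the $T_i,U_{i,j}$ only. Lemma~\ref{lemma:gcd2}, by contrast, controls the monic GCD in the polynomial ring $\Q[\ell,T_0,\dots,U_{r,n}]$. The same mismatch occurs modulo $p$, between $\F_p(\ell)$ and $\F_p[\ell,\dots]$.

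In general these two GCDs differ by a factor in $\Q(\ell)$. For example, $T+1/\ell$ versus $\ell T+1$, and that factor need not even reduce modulo $p$. So your claim that Lemma~\ref{lemma:gcd2} ``applies unchanged'' needs justification. The missing observation is that $P'$ does not involve the $\ell$'s:
\begin{itemize}
\item Any divisor of $P'$ in $\Q(\ell)[T_0,\dots,U_{r,n}]$ is, up to a unit of $\Q(\ell)$, in $\Q[T_0,\dots,U_{r,n}]$.
\item Hence the monic GCD over $\Q(\ell)$ is $\ell$-free, and therefore primitive over $\Q[\ell]$.
\item By Gauss' lemma it then divides $G'[P']$ in the polynomial ring, and it coincides with the monic GCD there.
\item The same argument works modulo $p$, since $P'\bmod p$ is nonzero and $\ell$-free.
\end{itemize}
This is what makes Lemma~\ref{lemma:gcd2}, and then Lemma~\ref{lemma:divmodp}, applicable to the computation the algorithm actually performs.

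On the bound, your concern is justified. Substituting $h\in O(H_W+nD_W\ln n)$ into the \emph{displayed} estimate of Lemma~\ref{lemma:HsepH} leaves a term $n^7D_VD_W\ln n$. This term is not in $O(n^6D_V(D_W\ln D_W+H_W+D_WH_V+D_VD_W))$, and $\ln n\le n$ alone does not absorb it.

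Your first option, re-tracking, is the correct one and is what the paper does. Apply Lemma~\ref{lemma:gcd2} directly in $O(n^2)$ variables with
\begin{itemize}
\item $d_F\in O(nD_V)$ and $h_F\in O(H_V+nD_V\ln n)$,
\item $d_G\in O(nD_VD_W)$ and $h_G\in O(H_W+nD_WH_V+n^2D_VD_W\ln n)$.
\end{itemize}
All $\ln n$ terms then carry at most $n^5$ and are dominated by $n^6D_V^2D_W$.

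Finally, drop the alternative of homogenizing in the a priori degree $D_W$. \textsc{Separate\_Hypersurface\_Chow} homogenizes in $\deg(G)$, so that would change the algorithm. Your first condition is the right one: $p$ must not divide a nonzero integer coefficient of a top-$X$-degree term of $c_WG_\L$.
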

\begin{proof}
  The proof is similar to that of Lemma~\ref{lemma:HsepH}, taking
  $G=G_{\L}$. We introduce the same polynomials $C',P',G'$ and
  ${G'}^h$, with $P'$ still having degree $O(nD_V)$ and height $O(H_V
  + n D_V \ln(n))$; now, by Lemma~\ref{lemma:hgenp}, ${G'}^h$ has
  degree $O(D_W)$ in $X_1,\dots,X_n$, $O(n D_W)$ in
  $\ell_{0,1},\dots,\ell_{s,n}$, and height $O(H_W+n D_W \ln(n))$.
  Again, the first property to guarantee is that the denominators in
  $P'$ and $G'$ do not vanish modulo $p$, and that reducing $G$ modulo
  $p$ does not change its degree. This is done as in
  Lemma~\ref{lemma:HsepH}: it happens as long as $p$ does not divide a
  certain nonzero integer $\Delta_0$ of height $O(H_V + H_W + n
  (D_V+D_W) \ln(n))$.

  We are left with giving conditions for the computation of the monic
  GCD, say $Q'$, of $P'$ and $G'[P']$ to reduce well modulo $p$. This
  is a GCD over the ground field $\Q(\ell_{0,1},\dots,\ell_{s,n})$, in
  variables $T_i,U_{i,0},\dots,U_{i,n}$ for $i=0,\dots,r$. However,
  since $P'$ is in $\Q[T_0,\dots,U_{r,n}]$, $Q'$ lies in
  $\Q[T_0,\dots,U_{r,n}]$, and thus is also the monic GCD of $P'$ and
  $G'[P']$ in $\Q[\ell_{0,1},\dots,\ell_{s,n},T_0,\dots,U_{r,n}]$ as
  well.  Similarly, for any prime $p$ (that does not divide
  $\Delta_0$), the GCD of $P' \bmod p$ and $G'[P'] \bmod p$ can be
  computed in $\F_p[\ell_{0,1},\dots,\ell_{s,n},T_0,\dots,U_{r,n}]$.
  
  We can then again apply Lemma~\ref{lemma:gcd2}, this time to the polynomials
  $P'$ and $G'[P']$ in
  $\Q[\ell_{0,1},\dots,\ell_{s,n},T_0,\dots,U_{r,n}]$. These are
  polynomials in $O(n^2)$ variables, with the former of degree
  $O(nD_V)$ and height $O(H_V + n D_V \ln(n))$, and the latter of
  degree $O(n D_V D_W)$ and height $O(H_W + n D_W H_V + n^2 D_V D_W \ln(n))$. The lemma then implies the existence of
  an integer $\Delta_1$ of height
  \[
  h(\Delta_1) \in O( n^6 D_V ( D_W \ln(D_W) + H_W + D_W H_V +D_V D_W ))
  \]
  such that if $p$ does not divide $\Delta_0 \Delta_1$, the GCD
  computation reduces well modulo $p$ (so the result of this step has
  coefficients in $\Z_{(p)}$). The final division is again
  handled by Lemma~\ref{lemma:divmodp}.

  As in Lemma~\ref{lemma:HsepH}, we also need to guarantee that
  $\gcd(C_V\bmod p, \partial C_V/\partial U_{0,0}\bmod p)=1$; this is
  the case when $p$ does not divide a nonzero integer $\Delta_2$ of
  height $O(n^6 D_V (D_V + H_V))$. We take $\Delta^{\rm
    separate}_{V,W}=\Delta_0 \Delta_1\Delta_2$; its height is bounded
  above by the same asymptotic expression as that of $\Delta_1$.
\end{proof}


\subsection{Computing a degree $d$ Chow form} \label{sec:degd}

Take $V$ $r$-equidimensional, in normal position and of degree $D$,
and assume that we know its normalized Chow form $C_V$. For $F$ of
degree $d \ge 1$, we recall another result from~\cite{JeKrSaSo04}, for
the computation of $C_{d,V}[F]$ (see Section~\ref{sec:defchow} for the notation),
where $C_{d,V}$ is normalized degree $d$ Chow form of $V$.

We need a straightforward subroutine \textsc{Pseudo-companion}, which
takes as input a polynomial $P(T)$ of degree $D$ and returns the
companion matrix of $P/a$, multiplied by $a$ (no division or
multiplication is needed, all entries are coefficients of $P$).

In the algorithm, we use a division (at Step~\ref{step:exdiv}) in a
multivariate polynomial ring. In the previous algorithms we saw, all
divisions were guaranteed to be exact, since they were all of the form
$P/\gcd(P,\dots)$. In this instance, this may not be the case for all
inputs, so to cover all possible situations, we return {\nondef} when
this division is not exact. As usual, in the lemma below, we 
mention that the division is exact in all cases of interest to us.

\medskip\noindent{\sc Degree\_$d$\_Chow\_form}\vspace{-1mm}
\begin{algorithmic}[1]
  \Require polynomial $C$ in $\K[\bm U_0,\dots,\bm U_r]$, with $\bm U_i =U_{i,0},\dots,U_{i,n}$ for all $i$
  \Require polynomial $F$ in $\K[X_1,\dots,X_n]$
  \Ensure polynomial $Q$ in $\K[\bm U_0,\dots,\bm U_{r-1}]$ or \nondef
  \State $d \gets \deg(F)$
  \State $D \gets \deg(C,U_{0,0})$
  \State $P \gets (-1)^D C(\bm U_0,\dots,\bm U_{r-1}, \bm U'_r) \in \K[\bm U_0,\dots,\bm U_r][T]$, with $\bm U'_r=U_{r,0}-T,U_{r,1},\dots,U_{r,n}$
  \State $a \gets \coeff(P, T^D) \in \K[\bm U_0,\dots,\bm U_r]$
  \State $M_P \gets \textsc{Pseudo-companion}(P)$
  \State $w_0 \gets \frac{\partial P}{\partial T}$
  \State \textbf{for} $i=1,\dots,n$ \textbf{do} $w_i\gets -\frac{\partial P}{\partial U_{r,i}}$
  \State \textbf{for} $i=0,\dots,n$ \textbf{do} $w_i^h \gets \textsc{Homogenize}(w_i, T',(T), D)$
  \LeftComment{homogenization of $w_i$  in degree $D$
  with respect to $T$, using a new variable $T'$}
  \State \textbf{for} $i=0,\dots,n$  \textbf{do} $M_i \gets w^h_i(a {\rm Id}, M_P)$
  \State\label{step:beforeexdiv} $F_{\rm gen} \gets \sum_{|\bm\alpha| \le d} V_\alpha X_0^{d-|\alpha|} X_1^{\alpha_1} \cdots X_n^{\alpha_n}$,
  $\bm V=(V_{\bm\alpha})_{|\bm \alpha|\le d}$ new indeterminates
  \LeftComment{generic homogeneous polynomial of degree $d$}
  \State\label{step:exdiv}  $G \gets {a^d \det(F_{\rm gen}(M_0,\dots,M_n))} / {\det(M_0)^d}.$
  \State \Return $G(\bm U_0,\dots,\bm U_{r-1},\acoeff(F))$
\end{algorithmic}

\medskip

\begin{lemma}\label{lemma:degDCF}
  Let $V$ be either empty or $r$-equidimensional and in normal
  position, and defined over a field $\K$. Given its normalized Chow
  form $C_V$ and $F$ in $\K[X_1,\dots,X_n]$, and assuming that
  $\gcd(C_V, \partial C_V/\partial U_{0,0})=1$, {\sc
    Degree\_$d$\_Chow\_form} returns $C_{\deg(F),V}[F]$, with the
  division at Step~\ref{step:exdiv} being exact.
\end{lemma}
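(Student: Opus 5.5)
Our plan is to reduce everything to the geometry of the fibres of $V$ over the generic linear forms, following~\cite{JeKrSaSo04}. The empty case is trivial, since then $C_V=1$, $D=0$, $P=a=1$, all matrices are empty, both determinants equal $1$, and the output is the constant $1$. So we assume $V$ is $r$-equidimensional of degree $D$ and in normal position. By Lemma~\ref{lemma:leadmon} and normal position, $C_V$ has degree exactly $D$ in $U_{0,0}$, so Step~2 returns $D=\deg(V)$.

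We work over $\K(\bm U_0,\dots,\bm U_r)$ and let $\mathbb{M}$ be an algebraic closure of it. The polynomial $P(T)=(-1)^D C_V(\bm U_0,\dots,\bm U_{r-1},\bm U'_r)$ has degree $D$ in $T$. Its leading coefficient $a$ equals, up to sign, $C_V(\bm U_0,\dots,\bm U_{r-1},(1,0,\dots,0))$, which is nonzero by normal position (Lemma~\ref{lemma:normaleq}(4)). We plan to show that the fibre
\[Z=V_{\mathbb{M}}\cap V(U_{i,0}+U_{i,1}X_1+\cdots+U_{i,n}X_n,\ i<r)\]
is a set of $D$ points $\xi_1,\dots,\xi_D$, each with multiplicity one, and that $P=a\prod_k (T-\ell_r(\xi_k))$, where $\ell_r=U_{r,0}+\sum_j U_{r,j}X_j$. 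This is the Poisson-type product formula for Chow forms, which follows from the Fact characterizing zeros of $C$ together with Fact~\ref{fact:unionchow}. The assumption $\gcd(C_V,\partial C_V/\partial U_{0,0})=1$ is what lets us assert that the roots of $P$ are simple, exactly as in the proof of Lemma~\ref{lemma:separate}. Differentiating the product formula then gives the Kronecker/shape-lemma identities $w_j(\ell_r(\xi_k))=\xi_{k,j}\,w_0(\ell_r(\xi_k))$ for $j=1,\dots,n$, with $w_0(\ell_r(\xi_k))\neq 0$. This is the standard derivation used in~\cite{JeKrSaSo04}; it gives the parametrization $\xi_{k,j}=w_j/w_0$ evaluated at the roots.

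Since $P/a$ is squarefree, the companion matrix $M_P/a$ is diagonalizable with eigenvalues $\ell_r(\xi_k)$, and the $M_i$ share a common eigenbasis. The key step is the homogenization: $w_i^h(a\,\mathrm{Id},M_P)=a^D\,w_i(M_P/a)$. Hence $M_i$ has eigenvalues $a^D w_i(\ell_r(\xi_k))$. In particular $\det(M_0)=a^{DD}\prod_k w_0(\ell_r(\xi_k))\neq 0$, and $F_{\rm gen}(M_0,\dots,M_n)$ has eigenvalues $a^{dD}\,w_0(\ell_r(\xi_k))^d\,F_{\rm gen}(1,\xi_k)$. Therefore
\[\frac{a^d\det F_{\rm gen}(M_0,\dots,M_n)}{\det(M_0)^d}=a^d\prod_k F_{\rm gen}(1,\xi_k).\]

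Next we identify this quantity with the degree $d$ Chow form. The expression $a^d\prod_k F_{\rm gen}(1,\xi_k)$ is a polynomial in $\bm U_0,\dots,\bm U_{r-1},\bm V$; once we know this, the division at Step~\ref{step:exdiv} is exact. It vanishes exactly when the generic degree-$d$ form meets the fibre, which matches the defining property of $\Delta^{(d)}_V$. Its degrees are $D$ in $\bm V$ and at most $dD$ elsewhere. Invoking the Poisson formula for generalized Chow forms (as in~\cite{KrPaSo01,JeKrSaSo04}), this expression is a degree $d$ Chow form of $V$ up to a constant. To fix the normalization, we evaluate at $({\bm e}_0,\dots,{\bm e}_{r-1},{\bm e}_{r,d})$. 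There the fibre consists of $X_1=\dots=X_r=0$, and we will compute, using the normalization $C_V$ monic, that $a^d\prod_k \xi_{k,r}^d$ evaluates to $1$. Finally, substituting $\acoeff(F)$ for $\bm V$ gives $C_{\deg(F),V}[F]$.

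The main obstacle is polynomiality, namely showing that $a^d\prod_k F_{\rm gen}(1,\xi_k)$ lies in $\K[\bm U_0,\dots,\bm U_{r-1},\bm V]$ rather than merely in its fraction field. It also requires checking that the normalization constant is exactly $1$ in positive characteristic. For the first point we plan to argue via the $\bm V$-degree and the irreducible-factor correspondence with $C_V$, using Fact~\ref{fact:unionchow} componentwise. For the second we use the explicit evaluation at the ${\bm e}$ vectors.
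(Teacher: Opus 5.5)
Your overall route is the argument of~\cite[Section~3.2]{JeKrSaSo04}, which the paper simply cites: the product $P=a\prod_k(T-\ell_r(\xi_k))$ with $\ell_r=U_{r,0}+U_{r,1}X_1+\cdots+U_{r,n}X_n$, simple roots from the hypothesis, the parametrization $w_j(\ell_r(\xi_k))=\xi_{k,j}w_0(\ell_r(\xi_k))$, and simultaneous diagonalization of the $M_i$. The paper's own text only checks $\det(M_0)\neq 0$. One detail there: your $T$ shifts $U_{r,0}$, not $U_{0,0}$. So before arguing ``exactly as in Lemma~\ref{lemma:separate}'' you first need $\gcd(C_V,\partial C_V/\partial U_{r,0})=1$, which the paper obtains from the symmetry of $C_V$ in the blocks $\bm U_0,\dots,\bm U_r$.

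\textbf{The normalization step fails as written.} Take $r\ge 1$ and evaluate at $(\bm e_0,\dots,\bm e_{r-1})$. The forms $U_{i,0}+U_{i,1}X_1+\cdots+U_{i,n}X_n$ become $1,X_1,\dots,X_{r-1}$, so the affine fibre is empty: the relevant set $V^{\rm proj}\cap V(X_0,\dots,X_{r-1})$ lies at infinity. Moreover $a$ specializes to $C_V(\bm e_0,\dots,\bm e_{r-1},\bm e_0)=0$ and the $\xi_k$ escape to infinity. Hence $a^d\prod_k\xi_{k,r}^d$ cannot be evaluated there pointwise. Also, a fibre inside $X_1=\cdots=X_r=0$, as you describe it, would give the value $0$, not $1$.

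\textbf{What works instead.} Use the identity given by $P(0)$, namely $C_V=a\prod_k\ell_r(\xi_k)$. It yields
\[G(\bm U_0,\dots,\bm U_{r-1},\acoeff(\ell_r^d))=C_V(\bm U_0,\dots,\bm U_r)^d.\]
Once $G$ is known to be a polynomial, specialize all blocks at $\bm e_0,\dots,\bm e_r$. This gives $G(\bm e_0,\dots,\bm e_{r-1},\bm e_{r,d})=C_V(\bm e_0,\dots,\bm e_r)^d=1$, in any characteristic.

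\textbf{The same identity settles polynomiality, which you leave vague.} Write the Poisson formula $C_{d,V}=\beta\prod_kF_{\rm gen}(1,\xi_k)$ with $\beta\in\K(\bm U_0,\dots,\bm U_{r-1})$. All multiplicities are one, since the $D$ points are distinct and the $\bm V$-degree is $D$. This gives $G=(a^d/\beta)\,C_{d,V}$. Specializing $\bm V$ at $\acoeff(\ell_r^d)$ shows that $(\beta/a^d)\,C_V^d$ is a polynomial. Every irreducible factor of $C_V$ involves $\bm U_r$ and $\beta/a^d$ does not, so $\beta/a^d$ has no denominator. Comparing multidegrees then shows it is a nonzero constant. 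Hence $G$ is a polynomial, the division at Step~\ref{step:exdiv} is exact, and $G$ equals $C_{d,V}$.
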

\begin{proof}
  The proof is in~\cite[Section~3.2]{JeKrSaSo04}; here, we simply highlight
  where the assumption that $\gcd(C_V, \partial C_V/\partial U_{0,0})=1$ is used.

  For the division at Step~\ref{step:exdiv} to be well-defined, we
  have to guarantee that $\det(M_0)$ does not vanish (then, the
  division being exact is established in the reference above). This
  determinant is, up to a power of $a=\coeff(P, T^D)$, the resultant
  of $P$ and $\frac{\partial P}{\partial T}$ with respect to $T$.  It
  is then enough to verify that $\gcd(P,\partial P/\partial T)=1$ in
  $\K(\bm U_0,\dots,\bm U_r)[T]$.

  Because $C_V$ is symmetric in $\bm U_0,\dots,\bm U_r$, we have
  $\gcd(C_V, \partial C_V/\partial U_{r,0})=1$. We can then repeat the
  argument used in the proof of Lemma~\ref{lemma:separate}, and prove
  that $\gcd(P,\partial P/\partial T)=1$ in $\K[\bm U_0,\dots,\bm
    U_r,T]$. This implies that these two polynomials also have
  constant GCD in $\K(\bm U_0,\dots,\bm U_r)[T]$.
\end{proof}

\begin{lemma}\label{lemma:deltachow}
  Suppose that $V$ is an algebraic set in $\C^n$, defined over $\Q$,
  either empty or $r$-equidimensional and in normal position, with
  degree at most $D_V$ and height at most $H_V$, and that $F$ has integer
  coefficients, with degree at most $d_F$ and height at most $h_F$. Then,
  there exists a nonzero integer $\Delta^{{\rm Chow}}_{V,F}$ with
  \[h(\Delta^{\rm Chow}_{V,F})  \in O(  n^6 D_V(D_V + H_V)  + h_F)\]
  such that if a prime $p$ does not divide $\Delta^{{\rm Chow}}_{V,F}$, then:
  \begin{itemize}
  \item $C_V$ and $Q=\textsc{Degree\_$d$\_Chow\_form}(C_V, F)$ have coefficients in $\Z_{(p)}$,
  \item $\textsc{Degree\_$d$\_Chow\_form}(C_V \bmod p, F \bmod p)=Q \bmod p$,
  \item $\gcd(C_V \bmod p, \partial C_V/\partial U_{0,0} \bmod p)=1$.
  \end{itemize}
\end{lemma}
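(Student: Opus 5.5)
We build $\Delta^{\rm Chow}_{V,F}$ as a product of a few factors, following Lemma~\ref{lemma:HsepH}. The key difference is that \textsc{Degree\_$d$\_Chow\_form} computes no GCD. Apart from one exact division, it only does ring operations and specialisation. So the only places reduction modulo $p$ can fail are: denominators of $C_V$ becoming non-units; the degree of $F$ dropping; and the divisor $\det(M_0)^d$ at Step~\ref{step:exdiv} vanishing modulo $p$.

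\textbf{Step 1: denominators and degrees.} Write $C_V=C'/c$ with $C'=C_V^{\rm primitive}$, where $c$ and $C'$ have height at most $H'=H_V+(r+1)D_V\ln(n+2)$ by Lemmas~\ref{lemma:hCV} and~\ref{lemma:hchow}. We require $p\nmid c\,\lc(F)$. Then $C_V$ has coefficients in $\Z_{(p)}$, and $\deg(F\bmod p)=\deg F$ since the order is graded. In addition, $\deg(C_V,U_{0,0})=D$ is preserved, because the leading monomial $U_{0,0}^D\cdots U_{r,r}^D$ has coefficient $1$.

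Steps 3--10 are polynomial expressions with integer coefficients in the coefficients of $C_V$. Hence $P$, $a$, $M_P$, the $w_i^h$, the $M_i$ and the numerator $N=a^d\det(F_{\rm gen}(M_0,\dots,M_n))$ all lie over $\Z_{(p)}$, and their reductions coincide with the quantities computed from $C_V\bmod p$. This factor contributes only $O(H'+h_F)$.

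\textbf{Step 2: the division.} By Lemma~\ref{lemma:degDCF}, over $\Q$ the division $G=N/\det(M_0)^d$ is exact. We apply Lemma~\ref{lemma:divmodp} with $A=N$ and $B=\det(M_0)^d$. This gives that $G$ has coefficients in $\Z_{(p)}$ and that $G\bmod p=N\bmod p/(\det(M_0)\bmod p)^d$, as soon as $\det(M_0)\bmod p\neq 0$. The division performed modulo $p$ is then exact and gives this same quotient. Specialising $\bm V$ at $\acoeff(F)$ commutes with reduction. So the first two items hold once $\det(M_0)\not\equiv 0\bmod p$.

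\textbf{Step 3: the obstacle, a nonvanishing condition on $\det(M_0)$ of the right height.} Estimating the height of $\det(M_0)$ directly via \eqref{eq:det} would bring in extra factors of $D$. Instead we use that $\det(M_0)$ is, up to a power of $a$, the resultant of $P$ and $\partial P/\partial T$ in $T$. From the proof of Lemma~\ref{lemma:degDCF}, it suffices that $\gcd(P\bmod p,\partial P/\partial T\bmod p)=1$ and that $a\bmod p\neq0$. The latter holds since $a=\pm C_V(\dots,\bm U_r\text{ with }U_{r,0}\text{ coefficient})$ contains the leading monomial with coefficient $\pm1$.

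By the argument in the proof of Lemma~\ref{lemma:separate} (now with the block $\bm U_r$), the condition $\gcd(P,\partial P/\partial T)=1$ follows from $\gcd(C_V,\partial C_V/\partial U_{r,0})=1$ holding modulo $p$. By the symmetry of $C_V$ in the blocks, this is equivalent to the third item, $\gcd(C_V\bmod p,\partial C_V/\partial U_{0,0}\bmod p)=1$. One caveat: the argument of Lemma~\ref{lemma:separate} used only that $C_V\bmod p$ has no factor in common with its derivative, not squarefreeness, and it is this derivative-gcd condition that we carry over.

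So, exactly as in the last step of Lemma~\ref{lemma:unionH}, we apply Lemma~\ref{lemma:gcd2} to $C_V$ (equivalently $C'$) and $\partial C'/\partial U_{0,0}$. These are polynomials in $O(n^2)$ variables, of degree $O(nD_V)$ and height $O(H_V+nD_V\ln n)$. This yields $\Delta_2$ with $h(\Delta_2)\in O(n^6D_V(D_V+H_V))$ guaranteeing the third item, and hence $\det(M_0)\not\equiv0$.

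Finally we set $\Delta^{\rm Chow}_{V,F}=c\,\lc(F)\,\Delta_2$, whose height is in $O(n^6D_V(D_V+H_V)+h_F)$.
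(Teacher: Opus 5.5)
Your proposal is correct and follows the paper's proof. It uses the same factors $c\,\lc(F)$ and a Lemma~\ref{lemma:gcd2}-type integer securing $\gcd(C_V \bmod p,\partial C_V/\partial U_{0,0} \bmod p)=1$; this then forces $\det(M_0)\not\equiv 0 \bmod p$ via the resultant argument of Lemma~\ref{lemma:degDCF}, so that Lemma~\ref{lemma:divmodp} handles the division.

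Your extra check that $a\not\equiv 0 \bmod p$, which the paper leaves implicit, is genuinely needed, but your justification as stated does not work. The polynomial $a=C_V(\bm U_0,\dots,\bm U_{r-1},\bm e_0)$ does not contain the leading monomial $U_{0,0}^D\cdots U_{r,r}^D$. Instead, writing $C_V$ as a degree-$D$ form in the maximal minors shows that $U_{0,1}^D U_{1,2}^D\cdots U_{r-1,r}^D U_{r,0}^D$ has coefficient $(-1)^{rD}$ in $C_V$, so it occurs in $a$ with a unit coefficient.
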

\begin{proof}
  As before, we rewrite $C_V$ as $C_V=C^{\rm primitive}_V/c$, with $c$
  a minimal denominator of $C_V$, and we let $\Delta_0$ be the product
  of $c$ by the coefficient of any monomial of maximal degree in
  $F$. Since $C_V$ has height at most $H'=H_V + (r+1)\ln(n+2)D_V \in
  O(H_V+nD_V\ln(n))$ and $F$ at most $h_F$, $\Delta_0$ has height $O(H_V +n D_V
  \ln(n) + h_F)$.

  As in the proof of Lemma~\ref{lemma:unionH}, there exists
  $\Delta_1$ nonzero, with $h(\Delta_1) \in O(n^6 D_V(D_V + H_V))$,
  such that if $p$ does not divide $\Delta_0\Delta_1$, then in addition,
  $\gcd(C_V \bmod p, \partial C_V/\partial U_{0,0} \bmod p)=1$.
  Suppose then that a prime $p$ does not divide $\Delta_0\Delta_1$, so
  in particular $C_V$ has coefficients in $\Z_{(p)}$. The integers $d$
  and $D$ computed at the first two steps remain the same modulo $p$
  (for the former, this is by construction of $\Delta_0$, for the
  latter this is because $C_V$ has leading coefficient $1$). This
  implies that all quantities computed up to
  Step~\ref{step:beforeexdiv} have coefficients in $\Z_{(p)}$, and
  that their computation commute with reduction modulo $p$. It follows
  that $A=a^d \det(F_{\rm gen}(M_0,\dots,M_n))$ and $B=\det(M_0)^d$
  have coefficients in $\Z_{(p)}$, and we know from the previous lemma
  that $B$ is nonzero and divides $A$.

  To be able to apply Lemma~\ref{lemma:divmodp}, we have to ensure
  that $\det(M_0)$ does not vanish modulo $p$, but we saw in the proof
  of the previous lemma that this is the case, since $\gcd(C_V \bmod
  p, \partial C_V/\partial U_{0,0} \bmod p)=1$.  Then,
  Lemma~\ref{lemma:divmodp} ensures that $G$ computed at
  Step~\ref{step:exdiv} has coefficients in $\Z_{(p)}$, and that
  division commutes with reduction modulo $p$. Finally, specialization
  at the coefficients of $F$ raises no difficulty.

  Altogether, we set $\Delta^{\rm Chow}_{V,f}=\Delta_0 \Delta_1$,
  which has height $O(n^6 D_V(D_V + H_V)  + h_F)$, and the lemma is proved.
\end{proof}


\subsection{Proper intersection} \label{ssec:propint}

Finally, take $V$ $r$-equidimensional in normal position and $F$ in
$\K[X_1,\dots,X_n]$ and assume now that $W=V \cap V(F)$ has dimension
$r-1$ (or is empty). We recall here another procedure
from~\cite{JeKrSaSo04}, itself using a result due to
Philippon~\cite{Philippon86}, that gives us a Chow form of $W$. 
We start with two definitions.

\begin{definition}
Suppose that $Q$ is a nonzero polynomial
in $\K[\bm U_0,\dots,\bm U_{r-1}]$, and consider its absolute
factorization, that is, its factorization into monic irreducibles in
$\Kbar[\bm U_0,\dots,\bm U_{r-1}]$. Separate those irreducible factors
that depend on $U_{0,0}$ from those that don't, to get an expression
\begin{align}\label{eq:factoQ}
 Q = \lc(Q) \prod_{i \in I_1} C_i^{m_i} \prod_{i \in I_2} C_i^{m_i},
\end{align}
with positive multiplicities $m_i$, and with $i \in I_1$ if $C_i$ does
not depend on $U_{0,0}$ and $i \in I_2$ if it does. The {\em weak
  squarefree part} $S(Q)$ of $Q$ is the product $C=\prod_{i \in I_2}
C_i$.
\end{definition}

This is a monic polynomial, {\it a priori} with coefficients in
$\Kbar$. If $\K$ is a perfect field, however, factoring over $\K$ is
enough: starting from an irreducible factorization as
in~\eqref{eq:factoQ}, but over $\K$, also yields $S(Q)$.


\begin{definition}
  Suppose that $Q$ is a nonzero polynomial in $\K[\bm U_0,\dots,\bm
    U_{r-1}]$. We say that $Q$ is {\em tame} if $C=S(Q)$ satisfies
  $\gcd(C, \partial C/\partial U_{0,0})=1$, and if the multiplicities
  in~\eqref{eq:factoQ} are all units in~$\K$.
\end{definition}
If $\K$ is perfect, the multiplicities obtained from factoring $Q$
over $\Kbar$ are the same as those obtained from factorization over
$\K$. If $\K$ has characteristic zero, any nonzero $Q$ is tame.

\begin{lemma}\label{lemma:tame}
  Suppose that $Q$ is tame. Then $S(Q) = Q/ ( \lc(Q)
  \gcd(Q, \partial Q/\partial U_{0,0}))$.
\end{lemma}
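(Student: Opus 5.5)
Write $Q = \lc(Q) \prod_{i \in I_1} C_i^{m_i} \prod_{i \in I_2} C_i^{m_i}$ as in~\eqref{eq:factoQ}, over $\Kbar$, with the $C_i$ pairwise distinct monic irreducibles. The plan is to compute $\gcd(Q,\partial Q/\partial U_{0,0})$ explicitly as $\prod_{i\in I_1} C_i^{m_i}\prod_{i\in I_2}C_i^{m_i-1}$. Dividing $Q/\lc(Q)$ by this product then leaves exactly $\prod_{i\in I_2} C_i = S(Q)$. Since monic GCDs do not depend on the field extension (the remark after Definition~\ref{def:prec}), we may compute over $\Kbar$.

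For the derivative, write $A=\prod_{i\in I_1}C_i^{m_i}$, which does not depend on $U_{0,0}$, and $B=\prod_{i\in I_2}C_i^{m_i}$. Then $\partial Q/\partial U_{0,0} = \lc(Q)\, A\, \partial B/\partial U_{0,0}$. Logarithmic differentiation gives
\[\frac{\partial B}{\partial U_{0,0}} = \prod_{i\in I_2} C_i^{m_i-1} \cdot \sum_{i\in I_2} m_i \frac{\partial C_i}{\partial U_{0,0}} \prod_{j\in I_2, j\ne i} C_j .\]
So $A\prod_{i\in I_2}C_i^{m_i-1}$ divides both $Q$ and its derivative. It remains to show that no $C_k$ with $k\in I_2$ divides the sum $\Sigma = \sum_i m_i \,\partial C_i/\partial U_{0,0}\prod_{j\ne i}C_j$, and that no $C_k$ with $k\in I_1$ can contribute an extra power. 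The latter is automatic: the exponent of $C_k$ in $Q$ is already $m_k$, so the GCD exponent is capped there.

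The main step is the claim about $\Sigma$. Modulo $C_k$, only the term $i=k$ survives, so $C_k\mid\Sigma$ would force $C_k \mid m_k\,\partial C_k/\partial U_{0,0}\prod_{j\ne k}C_j$. Since the $C_j$ are pairwise distinct irreducibles, $C_k\nmid C_j$. Since $m_k$ is a unit in $\K$ by tameness, this reduces to $C_k \mid \partial C_k/\partial U_{0,0}$. By degree considerations, this happens only if $\partial C_k/\partial U_{0,0}=0$.

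To rule that out, use the other half of tameness: $\gcd(S(Q),\partial S(Q)/\partial U_{0,0})=1$. If $\partial C_k/\partial U_{0,0}=0$, then $C_k$ divides every term of $\partial S(Q)/\partial U_{0,0} = \sum_i \partial C_i/\partial U_{0,0}\prod_{j\ne i}C_j$. Indeed, the $i=k$ term vanishes, and every other term contains the factor $C_k$. So $C_k$ would be a common factor of $S(Q)$ and its derivative, a contradiction. Note that $\partial C_k/\partial U_{0,0}=0$ is genuinely possible in positive characteristic even though $C_k$ depends on $U_{0,0}$, which is exactly why this hypothesis is needed.

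Hence the GCD, up to a constant, is $G=A\prod_{i\in I_2}C_i^{m_i-1}$. This $G$ is monic, being a product of monic polynomials and leading coefficients multiply. So $G$ equals the monic GCD, and $Q/(\lc(Q)\,G)=S(Q)$. This conclusion holds even if $Q$ is constant (then $\partial Q/\partial U_{0,0}=0$, the GCD is $Q/\lc(Q)=1$, and $S(Q)=1$).
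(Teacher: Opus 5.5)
Your proof is correct and follows essentially the same route as the paper: differentiate the factorization~\eqref{eq:factoQ}, use that the multiplicities are units and that $\gcd(S(Q),\partial S(Q)/\partial U_{0,0})=1$ forces $\partial C_k/\partial U_{0,0}\neq 0$ for $k\in I_2$, identify the monic GCD as $\prod_{i\in I_1}C_i^{m_i}\prod_{i\in I_2}C_i^{m_i-1}$, and divide. You additionally spell out details the paper leaves implicit: the reduction modulo $C_k$, the degree argument, and the edge cases where $I_2$ is empty or $Q$ is constant.
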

\begin{proof}
  Write $C=S(Q)$.
  Starting from $C = \prod_{i \in I_2} C_i$, our first assumption
  implies that $\gcd(C_i,\partial C_i/\partial U_{0,0})=1$ for all $i$
  in $I_2$. Differentiating the equation defining $Q$
  gives
  \[ \frac{\partial Q}{\partial U_{0,0}} = \lc(Q) \left (\prod_{i \in I_1} C_i^{m_i} \right ) \sum_{i \in I_2} m_i
  \frac{\partial C_i}{\partial U_{0,0}} C_i^{m_i-1}\prod_{i' \in I_2-\{i\}} C_{i'}^{m_{i'}}.\]
  Since all $m_i$'s are nonzero in $\K$, it
  follows that $\gcd(Q,\partial Q/\partial U_{0,0})= \prod_{i \in
    I_1} C_i^{m_i} \prod_{i \in I_2} C_i^{m_i-1}$ (both sides are monic), and finally that
  \[\frac{Q}{\gcd(Q,\partial Q/\partial U_{0,0})} = \lc(Q) C\qedhere\]
\end{proof}

The point in this definition comes
from~\cite[Proposition~2.4]{Philippon86}
and~\cite[Lemma~3.7]{JeKrSaSo04}: if $V$ is $r$-equidimensional in
normal position and $F$ in $\K[X_1,\dots,X_n]$, and if $W=V \cap V(F)$
has dimension $r-1$ (or is empty), then $C=S(C_{\deg(F),V}[F])$ is a
Chow form of $W$ (and in particular $C=C_W$ if $W$ is in normal
position, since $C$ is monic).

This leads us to the following procedure. Given arbitrary input, it
may return {\nondef}, if \textsc{Degree\_$d$\_Chow\_form} does or if
$Q$ vanishes (contradicting the assumption above), since then the
monic GCD at Step 3 is undefined. The following lemma summarizes
conditions that guarantee a valid output.

\medskip\noindent{\sc Intersection\_Chow}\vspace{-1mm}
\begin{algorithmic}[1]
  \Require polynomial $C$ in $\K[\bm U_0,\dots,\bm U_r]$, with $\bm U_i =U_{i,0},\dots,U_{i,n}$ for all $i$
  \Require polynomial $F$ in $\K[X_1,\dots,X_n]$ 
  \Ensure  polynomial $D$ in $\K[\bm U_0,\dots,\bm U_{r-1}]$ or \nondef
  \State $Q \gets\textsc{Degree\_$d$\_Chow\_form}(C, F)$
  \State $R \gets \gcd(Q, \partial Q/\partial U_{0,0})$
  \State $S \gets Q/R$
  \State \Return $S/\lc(Q)$
\end{algorithmic}

\begin{lemma}\label{lemma:interchow}
  Let $V$ be an algebraic set defined over a field $\K$, and let $F$ be in
  $\K[X_1,\dots,X_n]$. Suppose that the following hold:
  \begin{itemize}
  \item $V$ is either empty or $r$-equidimensional and in normal position
  \item $W=V \cap V(F)$ is either empty or $(r-1)$-equidimensional
  \item $\gcd(C_V,\partial C_V/\partial U_{0,0})=1$
  \item $\textsc{Degree\_$d$\_Chow\_form}(C_V, F)$ is tame.
  \end{itemize}
  Then, given the normalized Chow form $C_V$ of $V$ and $F$, {\sc
    Intersection\_Chow} returns a Chow form of $W$.
   If $W$ is in normal position, the output is its normalized Chow
   form $C_W$.
\end{lemma}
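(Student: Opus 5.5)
The argument is a chain of the results just stated: first show that Step~1 produces $C_{d,V}[F]$, then that Steps~2--4 extract its weak squarefree part, and finally identify that part with a Chow form of $W$.

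\textbf{Step 1.} Let $d=\deg(F)$. Since $V$ is empty or $r$-equidimensional in normal position, and $\gcd(C_V,\partial C_V/\partial U_{0,0})=1$, Lemma~\ref{lemma:degDCF} applies. It shows that Step~1 of \textsc{Intersection\_Chow} is well defined and returns $Q=C_{d,V}[F]$.

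\textbf{Step 2.} We check that $Q$ is nonzero. By the cited result (\cite[Proposition~2.4]{Philippon86} together with \cite[Lemma~3.7]{JeKrSaSo04}), if $W=V\cap V(F)$ is empty or $(r-1)$-equidimensional, then $S(C_{d,V}[F])$ is a Chow form of $W$. Underlying this is the fact that $C_{d,V}[F]$ is nonzero, being, up to a constant, a product of Chow forms of the components of $W$ with multiplicities, times factors not involving $U_{0,0}$. This nonvanishing is the point that needs care. If $W$ is empty, $Q$ should be a nonzero constant. Otherwise, the properness of the intersection is exactly what rules out $Q=0$, so I will invoke the same reference here. Consequently the monic GCD at Step~2 is defined.

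\textbf{Step 3.} Since $Q$ is tame by hypothesis, Lemma~\ref{lemma:tame} gives
\[ S(Q)=\frac{Q}{\lc(Q)\,\gcd(Q,\partial Q/\partial U_{0,0})}. \]
The right-hand side is precisely $S/\lc(Q)$ with $S=Q/R$ as computed in Steps~2--4, so the output equals $S(Q)$. Note that the division in Step~3 is exact because $R$ divides $Q$.

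\textbf{Step 4.} By the Philippon/JKSS result again, $S(C_{d,V}[F])$ is a Chow form of $W$. This proves the first claim.

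\textbf{Step 5.} If $W$ is in normal position, it has a unique Chow form with leading coefficient $1$, namely $C_W$. By definition $S(Q)$ is a product of monic irreducibles, so it has leading coefficient $1$. Hence it coincides with $C_W$.

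The empty case needs a small separate check. Then $Q$ is a nonzero constant, $R=1$, and the output is $1$, which is a valid (normalized) Chow form of the empty set.

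The main obstacle is Step~2: justifying that $Q\neq 0$ and that the Philippon identification holds over an arbitrary field rather than only in characteristic zero. I would handle this by noting that the cited arguments are geometric over $\Kbar$ and do not use the characteristic. All characteristic-dependent issues are absorbed by the tameness and GCD hypotheses.
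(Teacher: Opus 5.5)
Your proof is correct and follows the paper's argument: Lemma~\ref{lemma:degDCF} gives $Q=C_{\deg(F),V}[F]$, the result of Philippon and Jeronimo et al.\ identifies $S(Q)$ as a Chow form of $W$, tameness with Lemma~\ref{lemma:tame} shows that the output is $S(Q)$, and monicity of $S(Q)$ gives $C_W$ when $W$ is in normal position. One aside is inaccurate but harmless: when $V\neq\emptyset$ and $W=\emptyset$, $Q$ need not be a nonzero constant, because it can carry $U_{0,0}$-free factors coming from points at infinity of $V^{\rm proj}\cap V(F^h)$, with $F^h$ the homogenization of $F$ (for instance, $V=V(X_2(X_1-X_2)-1)$, which is in normal position, and $F=X_2$ give $Q=c\,U_{0,1}^2$), so $R\neq 1$ there; your Step~3 already covers this case, since $I_2=\emptyset$ forces $S(Q)=1$.
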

\begin{proof}
  Using Lemma~\ref{lemma:degDCF}, the first and third assumptions imply that
  at Step 1, $Q=C_{\deg(F),V}[F]$. The assumption on $W$ gives that
  $S(C_{\deg(F),V}[F])$ is a Chow form of $W$, and the last assumption
  allows us to apply the previous lemma to compute it.
\end{proof}

\begin{lemma}\label{lemma:interchowH}
  Let $V$ be an algebraic set defined over $\Q$, and let $F$ be in
  $\Z[X_1,\dots,X_n]$. Suppose that the following hold:
  \begin{itemize}
  \item $V$ is either empty or $r$-equidimensional and in normal position,
  \item $W=V \cap V(F)$ is either empty or $(r-1)$-equidimensional and in normal position.
  \end{itemize}
  Assume that $V$ has degree at most $D_V$ and height at most $H_V$, and
  $F$ has degree at most $d_F$ and height at most $h_F$. Then, there
  exists a nonzero integer $\Delta^{{\rm interesection}}_{V,F}$ with
  \[h(\Delta^{\rm intersection}_{V,F})
  \in O( n^6 d_F^2 D_V H_V + n^6 d_F h_F D_V^2  + n^7 d_F^2  D_V^2\ln(n) )\]
  such that if a prime $p$ does not divide $\Delta^{{\rm intersection}}_{V,F}$, then:
  \begin{itemize}
  \item $C_V$ and $\textsc{Intersection\_Chow}(C_V, F)$ have coefficients in $\Z_{(p)}$
  \item $\textsc{Intersection\_Chow}(C_V \bmod p, F \bmod p)=\textsc{Intersection\_Chow}(C_V, F) \bmod p$
  \item $\gcd(C_V \bmod p,\partial C_V/\partial U_{0,0} \bmod p)=1$
  \item $\textsc{Degree\_$d$\_Chow\_form}(C_V \bmod p, F \bmod p)$ is tame.
  \end{itemize}
\end{lemma}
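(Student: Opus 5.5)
The plan is to follow the four steps of \textsc{Intersection\_Chow} and to build $\Delta^{\rm intersection}_{V,F}$ as a product of integers, each controlling one step. Over $\Q$, Lemma~\ref{lemma:gcdeq1} and the remark that every nonzero polynomial is tame in characteristic zero show that all the hypotheses of Lemma~\ref{lemma:interchow} hold. Hence $Q=C_{d_F,V}[F]$ and the output is $C_W$.

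\emph{Step 1.} Take $\Delta_1=\Delta^{\rm Chow}_{V,F}$ from Lemma~\ref{lemma:deltachow}. For $p\nmid\Delta_1$ this gives the third item, that $C_V$ has coefficients in $\Z_{(p)}$, and that Step~1 commutes with reduction modulo $p$. By Lemma~\ref{lemma:hcdvf}, $Q$ has height at most
\[ d_FH_V+h_FD_V+(r+1)\ln(n+1)d_FD_V, \]
total degree $O(nd_FD_V)$, and lies in $O(n^2)$ variables. Its derivative $\partial Q/\partial U_{0,0}$ has the same degree bound and height larger by at most $\ln(d_FD_V)$.

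\emph{Steps 2--4.} Apply Lemma~\ref{lemma:gcd2} to $Q$ and $\partial Q/\partial U_{0,0}$ with $s\in O(n^2)$, degrees $O(nd_FD_V)$ and heights $O(d_FH_V+h_FD_V+nd_FD_V\ln n)$. This yields $\Delta_2$. Its height has a term $s\,d_F\,h_G$, which is $O(n^3d_F(d_FH_V+h_FD_V))$ up to the $\ln n$ term, and a term $s^2d_Fd_G=O(n^6d_F^2D_V^2)$. Both fit inside the claimed bound; I will only need to be generous with the $\ln$ factors. Then Lemma~\ref{lemma:divmodp} handles the division at Step~3, since the monic gcd $R$ does not vanish modulo $p$. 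For Step~4, I will multiply in the numerator of $\lc(Q)$. This guarantees that the leading coefficient of $Q\bmod p$ is $\lc(Q)\bmod p$, which is nonzero, so the final division commutes with reduction. These steps give the first two items.

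\emph{Tameness of $q=Q\bmod p$.} I expect this to be the main obstacle, because absolute factorizations do not commute with reduction. My plan is to avoid factoring modulo $p$ directly and instead group factors over $\Q$. First, write
\[ Q=\lc(Q)\,A\prod_{k\ge1}B_k^k, \]
where $A$ collects the factors in $I_1$ and $B_k$ collects the factors in $I_2$ of multiplicity exactly $k$. All of these are monic polynomials over $\Q$ (Galois-stable groupings), and $\prod_kB_k=C_W$. Second, ensure that these factors have coefficients in $\Z_{(p)}$. Because each is a monic divisor of $Q$, bound their heights by~\eqref{eq:lang}, and throw their denominators into $\Delta$; a height bound of order $h(Q)+O(n^2)\deg Q$ fits. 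Third, ensure that $\gcd(C_W\bmod p,\partial C_W/\partial U_{0,0}\bmod p)=1$. Proceed as in the proof of Lemma~\ref{lemma:unionH}, using Lemma~\ref{lemma:gcd2} applied to $C_W$, whose degree and height are bounded through the same divisor bound; this gives $\Delta_3$ within the target. Fourth, include every prime up to $\deg Q\in O(nd_FD_V)$; their product has height $O(nd_FD_V)$. Then all multiplicities $k\le\deg Q$ are units modulo $p$.

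With these in place, reducing the grouped factorization gives
\[ q=\lc(Q)\,a\prod_kb_k^k, \]
where $a=A\bmod p$ does not depend on $U_{0,0}$ and $\prod_kb_k=C_W\bmod p$. The latter is squarefree and coprime to its $U_{0,0}$-derivative. Hence the $b_k$ are pairwise coprime. Moreover, every irreducible factor of every $b_k$ depends on $U_{0,0}$, since otherwise it would divide $\partial(C_W\bmod p)/\partial U_{0,0}$. Therefore the factors of $q$ that depend on $U_{0,0}$ are exactly those of the $b_k$, each with multiplicity $k$, which is a unit modulo $p$. So $S(q)=C_W\bmod p$, and this satisfies the gcd condition; that is, $q$ is tame. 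Taking the product of all the integers above gives $\Delta^{\rm intersection}_{V,F}$, and collecting the dominant terms gives the stated $O(\cdot)$ bound.
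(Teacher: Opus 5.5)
Your proposal is correct. Its tameness argument is essentially the paper's: factor $Q$ over $\Q$ into a monic $U_{0,0}$-free part and monic $U_{0,0}$-dependent parts whose product is $C_W$. Then make these factors have coefficients in $\Z_{(p)}$, exclude primes that would kill the multiplicities, and force $\gcd(C_W\bmod p,\partial C_W/\partial U_{0,0}\bmod p)=1$. Grouping by multiplicity rather than by $\Q$-irreducible factor is only cosmetic. The differences are in the bookkeeping.

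\begin{itemize}
\item \emph{The GCD step.} The paper never applies Lemma~\ref{lemma:gcd2} to $(Q,\partial Q/\partial U_{0,0})$. Once $q=Q\bmod p$ is tame, Lemma~\ref{lemma:tame} shows that the modular run of \textsc{Intersection\_Chow} returns $S(q)=C_W\bmod p$, which gives the second item with no extra integer.
  \begin{itemize}
  \item Your integer is therefore redundant but harmless. Its height is $O(n^3d_FD_V(d_FH_V+h_FD_V)+n^6d_F^2D_V^2+\cdots)$; you dropped a factor $D_V$ in the $s\,d_F\,h_G$ term.
  \item The case $W=\emptyset$ must be handled separately, since then $\partial Q/\partial U_{0,0}=0$ and that lemma does not apply. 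Both runs then return $1$.
  \item What your route buys is that the first two items are proved independently of the factorization analysis.
  \end{itemize}
\item \emph{Denominators of $A$ and the $B_k$.} Lang's bound is unnecessary here. If a monic $B$ divides $Q=cQ'/e$, Gauss's lemma shows that the leading coefficient of the primitive part of $B$ (its minimal denominator, up to sign) divides $\lc(Q')$. Since $p\nmid e$, this is already handled by the numerator of $\lc(Q)$ that you exclude for Step~4. If you keep your version, multiply the per-factor bound by the number of factors, at most $\deg Q$; it still fits.
\item \emph{Degree and height of $C_W$.} Your divisor-bound estimate replaces the paper's arithmetic B\'ezout bound and also fits the target.
\item \emph{Multiplicities.} Excluding all primes up to $\deg Q$ is slightly more robust than the paper's $\Delta_2$, which only involves the $m_i$ with $i\in I_2$. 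Every multiplicity in the absolute factorization of $q$ is at most $\deg Q<p$. So your choice also makes the multiplicities of the $U_{0,0}$-free factors of $A\bmod p$ units. The literal definition of tameness asks for this, although only the $I_2$ multiplicities matter in Lemma~\ref{lemma:tame}.
\end{itemize}
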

\begin{proof}
  First, note that since $V$ is defined over $\Q$, the third and
  fourth assumptions in the previous lemma are automatically satisfied
  (Lemma~\ref{lemma:gcdeq1} for the former, and remark before
  Lemma~\ref{lemma:tame} for the latter, since $C_{\deg(F),V}[F]$ is
  nonzero), so that lemma proves that
  $C_W=\textsc{Intersection\_Chow}(C_V, F)$.
  
  Our first constraint on $p$ is that it does not divide the integer
  $\Delta^{{\rm Chow}}_{V,F}$ of Lemma~\ref{lemma:deltachow}. When
  this is the case, we get that $C_V$ and
  $Q=\textsc{Degree\_$d$\_Chow\_form}(C_V, F)$ computed at Step 1 have
  coefficients in $\Z_{(p)}$, $Q \bmod
  p=\textsc{Degree\_$d$\_Chow\_form}(C_V \bmod p, F \bmod p)$, and
  $\gcd(C_V \bmod p,\partial C_V/\partial U_{0,0} \bmod p)=1$. In
  particular, this establishes the third item in the lemma.

  Let us further assume that $p$ does not divide the numerator
  $\Delta_1$ of the leading coefficient of $Q$. It follows the leading
  coefficient of $Q \bmod p$ is the leading coefficient of $Q$,
  reduced modulo $p$.
  We can then consider the absolute factorization of $Q$ as
  in~\eqref{eq:factoQ}:
  \[ Q = \lc(Q) \prod_{i \in I_1} C_i^{m_i} \prod_{i \in I_2} C_i^{m_i}.\]
  Because $\Q$ is perfect, $Q$ also factors over $\Q$ as
  \[ Q = A \prod_{j \in J} B_j^{\mu_j},\]
  with $A$ and all $B_j$ in $\Q[\bm U_0,\dots,\bm U_{r-1}]$, $A = \lc(Q)  \prod_{i \in I_1} C_i^{m_i}$ of
  degree $0$ in $U_{0,0}$, all $B_j$ monic and of positive degree in
  $U_{0,0}$, $\prod_{j \in J} B_j = \prod_{i \in I_2} C_i$ and
  $\{\mu_j\}_{j \in J}=\{m_i\}_{i \in I_2}$. 

  Let us write $Q=c Q'/e$, with $Q'$ primitive (thus, with integer
  coefficients) and $c,e$ coprime integers.  For $j$ in $J$, we can
  also rewrite $Q$ as $Q=R_j B_j$, with $R_j$ having coefficients in
  $\Q$, and thus $c Q'/e = r_j R'_j/s_j \ B'_j/t_j$, with $R'_j,B'_j$
  primitive and $r_j,s_j$ coprime ($B_j$ is monic, so $t_j B_j$ is
  primitive). This gives $c s_j t_j Q' = e r_j R'_j B'_j$, and so $c
  s_j t_j = e r_j$ by taking contents. On the other hand, since $B_j$
  is monic, the leading coefficient of $B'_j$ is $t_j$, so $c s_j t_j
  \lc(Q') = e r_j \lc(R'_j) t_j$, and $\lc(Q') = \lc(R'_j) t_j$.  Our
  choice of $p$ ensures that $\lc(Q') \bmod p$ is nonzero, so $B_j$
  has coefficients in $\Z_{(p)}$.

  On the other hand, since $Q=C_{\deg(F),V}[F]$
  (Lemma~\ref{lemma:degDCF}), we know that $C_W = \prod_{i \in I_2}
  C_i$, that is, $C_W=\prod_{j \in J} B_j$. It follows that $C_W$ has
  coefficients in $\Z_{(p)}$, so the first item in the lemma is
  established. Besides, we get from Lemma~\ref{lemma:divmodp} that it
  is also the case for $A$. Using lower case $a$ and $b_j$ for the
  reductions of the polynomials $A$ and $B_j$ modulo $p$, we get $Q
  \bmod p = a \prod_{j \in J} b_j^{\mu_j}$, with $a$ that does not
  depend on $U_{0,0}$. Since $C_W$ is normalized, all $C_i$'s are
  (Lemma~\ref{lemma:normal}), which implies that the leading terms of
  all $B_j$'s (for our graded order) have positive degree in
  $U_{0,0}$. Since the $B_j$'s are monic, the $b_j$'s are also all
  monic and of positive degree in $U_{0,0}$.

  Next, we give conditions that ensure the fourth item, that
  $\textsc{Degree\_$d$\_Chow\_form}(C_V \bmod p, F \bmod p)$ is tame.
  Assuming that $p$ does not divide $\Delta^{{\rm Chow}}_{V,F}
  \Delta_1$, we saw that this is equivalent to $Q \bmod p$ being tame.
  \begin{itemize}
  \item We let $\Delta_2$ be the product of all primes dividing one of
    the integers $\{m_i\}_{i\in I_2} =\{\mu_j\}_{j \in J}$
  \item We let $\Delta_3$ be a nonzero integer such that if $p$ does
    not divide $\Delta_3$, $\gcd(C_W \bmod p,\partial C_W/\partial
    U_{0,0} \bmod p)=1$. When this is case case, since $\prod_{j \in
      J} b_j=C_W \bmod p$, all $b_j$'s are pairwise coprime, and all
    $b_j$ are absolutely squarefree.
  \end{itemize}
  It follows that when $p$ does not divide $\Delta^{\rm
    intersection}_{V,F}=\Delta^{{\rm Chow}}_{V,F}
  \Delta_1\Delta_2\Delta_3$, $C_W \bmod p$ is the weak squarefree part
  of $Q \bmod p$, and the multiplicities of the corresponding factors
  in $Q \bmod p$ are all nonzero modulo $p$, so $Q \bmod p$ is
  tame. This gives the fourth property in the lemma.

  Finally, under the same assumption on $p$, we establish the second
  point in the lemma, that is, $\textsc{Intersection\_Chow}(C_V \bmod
  p, F \bmod p)=C_W \bmod p$. Since we proved that $Q \bmod
  p=\textsc{Degree\_$d$\_Chow\_form}(C_V \bmod p, F \bmod p)$, and
  that this polynomial is tame, we know (by means of
  Lemma~\ref{lemma:tame}) that $\textsc{Intersection\_Chow}(C_V \bmod
  p, F \bmod p)$ returns the weak squarefree part of $Q \bmod p$,
  which we saw is $C_W \bmod p$.
  
  At this stage, it remains to give a height bound for $\Delta^{\rm
    intersection}_{V,F}$. Lemma~\ref{lemma:deltachow} gives an upper
  bound of $O( n^6 D_V(D_V + H_V) + h_F)$ for the height of
  $\Delta^{{\rm Chow}}_{V,F}$. The height of $\Delta_1$ is less than
  or equal to $h(Q)=C_{\deg(F),V}[F]$, and we know from
  Lemma~\ref{lemma:hcdvf} that the latter is in $O( d_F H_V + h_F D_V
  + nd_FD_V\ln(n))$.

  For $\Delta_2$ and $\Delta_3$, we are going to use upper bounds on
  the degree and height of $W$: using B\'ezout's theorem and its
  arithmetic version~\cite[Corollary~2.10]{KrPaSo01}, they are
  respectively at most $D_W=d_F D_V$ and $H_W = d_F H_V + h_F D_V + n
  d_F D_V \ln(n+1)$. The second quantity, $\Delta_3$, is the simplest
  to deal with: Lemma~\ref{lemma:gcd2} shows that we can take
  $\Delta_3$ of height $O(n^6 D_W (D_W + H_W))$, which is $O(n^6 d_F
  D_V (d_F H_V + h_F D_V + n d_F D_V \ln(n))$.

  For $\Delta_2$, since $Q$ has degree at most $D_W$ in $U_{0,0}$, it
  follows that $h(\Delta_2)$ is bounded above by $\sum_{p {\rm~prime~}
    \le D_W} \ln(p)$. This sum is Dirichlet's theta function
  $\vartheta(D_W)$~\cite[p.~22]{BachSh1996}, which satisfies
  $\vartheta(D_W) \in O(D_W)=O(d_F D_V)$~\cite[Lemma~8.2.2]{BachSh1996}.
\end{proof}

  
\section{The algorithm}


\subsection{Geometric view}

In this section, we describe a (rather straightforward) procedure to compute
the equidimensional components of a set $V(F_1,\dots,F_s)$, for
polynomials $F_i$ with coefficients in a field $\K$, first in purely
geometric terms, then by means of their Chow forms.

Given $F_1,\dots,F_s$ in $\K[X_1,\dots,X_n]$, let us write $V_i =
V(F_1,\dots,F_i)$, with $V_0 =\Kbar^n$. For $i=0,\dots,s$, we write
the equidimensional decomposition of $V_i$ as
\[V_i=\bigcup_{0 \le k \le n} V_{i,k},\]
with $V_{i,k}$ $k$-equidimensional or empty for all $k$. In this notation,
and in the whole presentation below, we keep all indices $i,k,\dots$ explicit,
as this will make it convenient for us to discuss the reduction modulo a prime
$p$ of all steps in this process.

For $i=0,\dots,s-1$, we have
\[V_{i+1} = \bigcup_{0 \le k\le n} \left ( V_{i,k} \cap V(F_{i+1}) \right ).\]
For $i$ as above and $k=0,\dots,n$, we can write
\[V_{i,k} = V_{i,k,{\rm zero}} \cup V_{i,k,{\rm proper}}, \quad\text{with}\quad V_{i,k,{\rm zero}}, V_{i,k,{\rm proper}}= \textsc{Separate}(V_{i,k},V(F_{i+1})).\]
It follows that
$V_{i+1}$ can be written as the union of all $V_{i,k,{\rm zero}}$ (which are all $k$-equidimensional or empty) and all $V_{i,k,{\rm proper}}
\cap V(F_{i+1})$  (which are all $(k-1)$-equidimensional or empty). That is,
\[V_{i+1} = \bigcup_{0 \le k \le n} W_{i,k}, \quad\text{with}\quad W_{i,k} =  V_{i,k,{\rm zero}} \cup (V_{i,k+1,{\rm proper}} \cap V(F_{i+1}));\]
here, for $k=n$, we set $V_{i,n+1,{\rm proper}}=\emptyset$.
Each $W_{i,k}$ is $k$-equidimensional or empty, but the family
$(W_{i,k})_{0 \le k \le n}$ may not form the equidimensional
decomposition of $V_{i+1}$, since some irreducible components of a
given $W_{i,k}$ may be subsets of some $W_{i,k'}$, $k' > k$.

We rectify this by using the $\textsc{Separate}$ operation again. For
$k=0,\dots,n$ and $\ell=k+1,\dots,n+1$, let $Z_{i,k,\ell}$ be the
union of all irreducible components of $W_{i,k}$ that are not
contained in the union of $W_{i,\ell},\dots,W_{i,n}$. In particular,
we have $Z_{i,k,n+1}=W_{i,k}$, and  $V_{i+1,k}=Z_{i,k,k+1}$. Besides, it follows from the definition
that for $\ell=k+1,\dots,n$, we have the recurrence
\[Z_{i,k,\ell} = Z_{i,k,\ell+1,{\rm proper}},\]
with
\[Z_{i,k,\ell+1,{\rm zero}},Z_{i,k,\ell+1,{\rm proper}} = \textsc{Separate}(Z_{i,k,\ell+1}, W_{i,\ell}).\]
This leads us to the following first version of our algorithm.

\medskip\noindent{\sc Decomposition\_Geometric}\vspace{-1mm}
\begin{algorithmic}[1]
\Require $F_1,\dots,F_s$
\Ensure the equidimensional components $V_{s,0},\dots,V_{s,n}$ of $V(F_1,\dots,F_s)$
\State Set $V_{0,0} = \cdots = V_{0,n-1} = \emptyset$ and $V_{0,n} = \Kbar^n$
\For{$i=0,\dots,s-1$}
\State $V_{i,n+1,{\rm proper}} \gets \emptyset$
\For{$k=n,\dots,0$}
\State  $V_{i,k,{\rm zero}},V_{i,k,{\rm proper}} \gets \textsc{Separate}(V_{i,k}, V(F_{i+1}))$\label{step:sepF}
\State  $J_{i,k} \gets V_{i,k+1,{\rm proper}} \cap  V(F_{i+1})$\label{step:J}
\State  $W_{i,k} \gets V_{i,k,{\rm zero}} \cup  J_{i,k}$\label{step:W}
\State  $Z_{i,k,n+1} \gets W_{i,k}$\label{step:Z}
\For{$\ell=n,\dots,k+1$}
\State\label{step:sepW}  $\_,Z_{i,k,\ell} \gets \textsc{Separate}(Z_{i,k,\ell+1}, W_{i,\ell})$
\EndFor
\State  $V_{i+1,k} \gets Z_{i,k,k+1}$
\EndFor
\EndFor
\State \Return $V_{s,0},\dots,V_{s,n}$
\end{algorithmic}

In order to turn this into an actual algorithm, we will use the
operations on Chow forms described in the previous section.  Most
of them require normal position; the following properties will ensure
all conditions we need are satisfied.

\begin{definition}
  We say that $F_1,\dots,F_s$ are in {\em general position} if the
  following holds: for $i=0,\dots,s-1$ and $k=0,\dots,n$, $W_{i,k}$ is
  either empty or in normal position.
\end{definition}
This property holds in generic coordinates: we will return to this
discussion and quantify the bad changes of variables later on.

\begin{lemma}\label{lemma:genpos}
  For any input $F_1,\dots,F_s$, 
  \begin{enumerate}
  \item\label{it1a} $V_{0,0},\dots,V_{0,n}$ are all empty or in normal position.
  \end{enumerate}
  Besides, if $F_1,\dots,F_s$ are in general position, the following
  holds:
  \begin{enumerate}[start=2]
  \item for $i=0,\dots,s-1$, $k=0,\dots,n$ and $\ell=k+1,\dots,n+1$,
    all sets $V_{i,k,{\rm zero}}$, $V_{i,k,{\rm proper}}$, $J_{i,k}$,
    $W_{i,k}$, $Z_{i,k,\ell}$ (and thus $V_{i+1,k}$) are empty or 
    $k$-equidimensional and in normal position.
  \end{enumerate}
\end{lemma}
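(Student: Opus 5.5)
Item~\ref{it1a} is immediate. For $k<n$, $V_{0,k}$ is empty. The set $V_{0,n}=\Kbar^n$ has projective closure $\P^n$, and $\P^n\cap V(X_0,\dots,X_n)$ is empty. By the equivalence $(3)\iff(4)$ of Lemma~\ref{lemma:normaleq}, $V_{0,n}$ is therefore in normal position. (One can also check directly that its Chow form $c\det(\bm U_0,\dots,\bm U_n)$ has leading monomial $U_{0,0}\cdots U_{n,n}$.)

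For the second item, I will argue by induction on $i$. The inductive hypothesis is that every $V_{i,k}$ is empty or $k$-equidimensional and in normal position. The base case $i=0$ is item~\ref{it1a}. The key tool is the first point of Lemma~\ref{lemma:normal}: an equidimensional set is in normal position if and only if each of its irreducible components is. Consequently, any union of some of the irreducible components of a set in normal position is again in normal position, or empty.

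Fix $i$ and $k$. Since $V_{i,k,{\rm zero}}$ and $V_{i,k,{\rm proper}}$ are unions of irreducible components of $V_{i,k}$, both are empty or $k$-equidimensional and in normal position. Next, $W_{i,k}$ is empty or in normal position by the general position hypothesis, and it is $k$-equidimensional by the geometric discussion preceding the algorithm. The components of $V_{i,k+1,{\rm proper}}$ are not contained in $V(F_{i+1})$, so intersecting them with this hypersurface drops the dimension by exactly one at every component (Krull). Hence $J_{i,k}$ is empty or $k$-equidimensional. Since $J_{i,k}\subseteq W_{i,k}$, every irreducible component of $J_{i,k}$ is a $k$-dimensional irreducible subset of the $k$-equidimensional set $W_{i,k}$, and thus is an irreducible component of $W_{i,k}$. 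Lemma~\ref{lemma:normal} then shows that $J_{i,k}$ is in normal position. Finally, by a descending induction on $\ell$, each $Z_{i,k,\ell}$ is a union of irreducible components of $W_{i,k}$: this holds at $\ell=n+1$, and the {\sc Separate} step only discards components. So each $Z_{i,k,\ell}$ is empty or $k$-equidimensional and in normal position. In particular this holds for $V_{i+1,k}=Z_{i,k,k+1}$, which closes the induction on $i$.

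The only delicate step is $J_{i,k}$. There I must ensure that its irreducible components are genuinely components of $W_{i,k}$, not merely subsets of them, which is why equidimensionality is checked first. If I wanted to avoid that argument, I would instead apply criterion~(4) of Lemma~\ref{lemma:normaleq} directly. The inclusion $J_{i,k}\subseteq W_{i,k}$ gives $J_{i,k}^{\rm proj}\subseteq W_{i,k}^{\rm proj}$, so the emptiness of $W_{i,k}^{\rm proj}\cap V(X_0,\dots,X_k)$ passes to $J_{i,k}$. The same projective-closure argument also handles all the other subsets, given that each has the correct dimension $k$.
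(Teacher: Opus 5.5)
Your proof is correct and takes essentially the same route as the paper. Normal position of $W_{i,k}$ comes from the general position hypothesis, and every other set is a union of irreducible components of a set already known to be in normal position (either $W_{i,k}$, or $V_{i,k}$, which is itself a union of components of $W_{i-1,k}$), so the first point of Lemma~\ref{lemma:normal} applies; your explicit check, via equidimensionality, that the components of $J_{i,k}$ are genuine components of $W_{i,k}$ fills in a step the paper states without detail.
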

\begin{proof}
  First item is clear. For the second claim, all dimension statements
  are true by definition. That $W_{i,k}$ is in normal position follows
  from the general position assumption. Then, all sets $V_{i,k,{\rm
      zero}}$, $J_{i,k}$ and $Z_{i,k,\ell}$ consist of irreducible
  components of $W_{i,k}$ so they are in normal position (or empty).
  Similarly, if $i>0$, the set $V_{i,k}$ consists of irreducible components of
  $W_{i-1,k}$ and it is then empty or in normal position,
  and the same conclusion holds if $i=0$ by the first item; in any
  case, it is then also the case for $V_{i,k,{\rm proper}}$.
\end{proof}

We can then present a more concrete version of this algorithm, using
the operations on Chow forms seen so far. Since the subroutines it uses
may return {\nondef}, the whole procedure may do so as well.

\medskip\noindent{\sc Decomposition\_Chow}\vspace{-1mm}
\begin{algorithmic}[1]
  \Require $F_1,\dots,F_s$ 
  \Ensure polynomials $C_{s,0},\dots,C_{s,n}$ or \nondef
  \State \textbf{for} $i=1,\dots,n-1$  \textbf{do} $C_{0,i}  \gets 1$
  \State $C_{0,n} \gets \det(\bm U_0,\dots,\bm U_n)$, with $\bm U_i=U_{i,0},\dots,U_{i,n}$ for all $i$
  \For{$i=0,\dots,s-1$}
  \State $C_{i,n+1,{\rm proper}} \gets 1$
  \For{$k=n,\dots,0$}
  \State  $C_{i,k,{\rm zero}},C_{i,k,{\rm proper}} \gets \textsc{Separate\_Hypersurface\_Chow}(C_{i,k}, F_{i+1})$\label{step:CsepF2}
  \State  $D_{i,k} \gets \textsc{Intersection\_Chow}(C_{i,k+1,{\rm proper}}, F_{i+1})$\label{step:CJ2}
  \State  $R_{i,k} \gets \textsc{Union\_Chow}(C_{i,k,{\rm zero}}, D_{i,k})$\label{step:CW2}
  \State  $S_{i,k,n+1} \gets R_{i,k}$\label{step:CZ2}
  \For{$\ell=n,\dots,k+1$}
  \State\label{step:CsepW}  $\_,S_{i,k,\ell} \gets \textsc{Separate\_Chow}(S_{i,k,\ell+1}, R_{i,\ell})$
  \EndFor
  \State  $C_{i+1,k} \gets S_{i,k,k+1}$
  \EndFor
  \EndFor
  \State \Return $C_{s,0},\dots,C_{s,n}$
\end{algorithmic}

The following proposition is routine: we establish that when the input
is in general position, the previous procedure correctly computes the
Chow forms of the algebraic sets defined in the geometric version of
the algorithm. We make the assumption that $\K$ has characteristic zero,
as it ensures some assumptions (such as tameness) that are needed to
guarantee that the subroutines we use return the expected output.

\begin{proposition}\label{prop:welldef}
  For any input $F_1,\dots,F_s$, 
  \begin{enumerate}
  \item\label{it1} $C_{0,0},\dots,C_{0,n}$ are the normalized Chow forms of $V_{0,0},\dots,V_{0,n}$.
  \end{enumerate}
  Besides, if $F_1,\dots,F_s$ are in general position and $\K$ has characteristic zero, the algorithm
  does not return {\nondef} and
  \begin{enumerate}[start=2]
  \item\label{it2} for $i=0,\dots,s-1$, $k=0,\dots,n$ and $\ell=k+1,\dots,n+1$,
    \begin{enumerate}
    \item\label{ita} $C_{i,n+1,{\rm proper}}$ is the normalized Chow form of $V_{i,n+1,{\rm proper}}$
    \item\label{itb} $C_{i,k,{\rm zero}}$ is the normalized Chow form of $V_{i,k,{\rm zero}}$
    \item\label{itc} $C_{i,k,{\rm proper}}$ is the normalized Chow form of $V_{i,k,{\rm proper}}$
    \item\label{itd} $D_{i,k}$ is the normalized Chow form of $J_{i,k}$
    \item\label{ite} $R_{i,k}$ is the normalized Chow form of $W_{i,k}$
    \item\label{itf} $S_{i,k,\ell}$ is the normalized Chow form of $Z_{i,k,\ell}$
    \item\label{itg} $C_{i+1,k}$ is the normalized Chow form of $V_{i+1,k}$
    \end{enumerate}
  \end{enumerate}
\end{proposition}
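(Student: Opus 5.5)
The plan is an induction that follows the loop structure of \textsc{Decomposition\_Chow}, matching each line against the corresponding line of \textsc{Decomposition\_Geometric}. Throughout, Lemma~\ref{lemma:genpos} supplies the normal-position hypotheses the subroutines need.

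\textbf{Item~\ref{it1}.} For $k<n$, $V_{0,k}$ is empty, and by convention its normalized Chow form is $1$. For $k=n$, $V_{0,n}=\Kbar^n$ has Chow forms $c\det(\bm U_0,\dots,\bm U_n)$. The leading monomial of this determinant is the diagonal term $U_{0,0}\cdots U_{n,n}$, with coefficient $1$. So $\det(\bm U_0,\dots,\bm U_n)$ is the normalized Chow form, and $\Kbar^n$ is in normal position, which is consistent with the first item of Lemma~\ref{lemma:genpos}.

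\textbf{Item~\ref{it2}.} I would induct on $i$, assuming that $C_{i,0},\dots,C_{i,n}$ are the normalized Chow forms of $V_{i,0},\dots,V_{i,n}$. Inside step $i$, I would run a second induction on $k$ descending from $n$ to $0$. The hypothesis is that $C_{i,k+1,{\rm proper}}$ and all $R_{i,\ell}$ for $\ell>k$ are already correct. The base case is item~\ref{ita}: $C_{i,n+1,{\rm proper}}=1$ is the normalized Chow form of the empty set. The hypothesis $\gcd(C,\partial C/\partial U_{0,0})=1$ is needed repeatedly, and it holds for every Chow form by Lemma~\ref{lemma:gcdeq1}, because $\K$ has characteristic zero. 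The steps at each $(i,k)$ are as follows.

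\begin{enumerate}
\item Line~\ref{step:CsepF2}. $V_{i,k}$ is empty or in normal position (Lemma~\ref{lemma:genpos}), so Lemma~\ref{lemma:separate} gives items~\ref{itb} and~\ref{itc}.
\item Line~\ref{step:CJ2}. $V_{i,k+1,{\rm proper}}$ is empty or $(k+1)$-equidimensional in normal position. None of its components lies in $V(F_{i+1})$, so by Krull each meets $V(F_{i+1})$ in a set that is empty or of pure dimension $k$. Hence $J_{i,k}$ is empty or $k$-equidimensional, and it is in normal position by Lemma~\ref{lemma:genpos}. Tameness of the degree-$d$ Chow form is automatic in characteristic zero, since that form is nonzero (it is a Chow form). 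Lemma~\ref{lemma:interchow} then gives that $D_{i,k}$ is the normalized Chow form $C_{J_{i,k}}$, which is item~\ref{itd}.
\item Line~\ref{step:CW2}. Lemma~\ref{lemma:unionchow} applies, using characteristic $\ne 2$ and the gcd condition, and gives item~\ref{ite}.
\item The inner loop over $\ell$. Descend on $\ell$, using Lemma~\ref{lemma:separategeneral} with $V=Z_{i,k,\ell+1}$, which is in normal position by Lemma~\ref{lemma:genpos}, and $W=W_{i,\ell}$. The Chow form of $W_{i,\ell}$ is $R_{i,\ell}$, already correct because $\ell>k$ was treated earlier in the descending loop. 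This gives item~\ref{itf}.
\item Finally, $C_{i+1,k}=S_{i,k,k+1}$ is the normalized Chow form of $Z_{i,k,k+1}=V_{i+1,k}$, which is item~\ref{itg} and closes both inductions.
\end{enumerate}

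The algorithm never returns \nondef, because each lemma invoked guarantees that its outputs are defined: the GCDs are taken of nonzero polynomials, and the divisions are exact.

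\textbf{Main obstacle.} The delicate point is the dimension and ordering bookkeeping. One must check that $J_{i,k}$ is genuinely empty or $(k-1+1)$-equidimensional, as Lemma~\ref{lemma:interchow} requires. One must also check that the $R_{i,\ell}$ used in the inner loop at level $k$ have already been computed for every $\ell>k$, which holds because the loop over $k$ is descending. Beyond that, it only needs care that the empty cases, where every Chow form is $1$, pass harmlessly through each subroutine.
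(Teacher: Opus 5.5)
Your proposal is correct and follows essentially the same argument as the paper: an induction on $i$ starting from the explicit initial Chow forms, applying Lemmas~\ref{lemma:separate}, \ref{lemma:interchow}, \ref{lemma:unionchow} and~\ref{lemma:separategeneral} in turn, with normal position supplied by Lemma~\ref{lemma:genpos} and the gcd and tameness hypotheses supplied by characteristic zero; your descending induction on $k$ just makes explicit the dependency order that the paper handles bullet by bullet. One small correction: $C_{\deg(F_{i+1}),V}[F_{i+1}]$ is nonzero because the intersection $V_{i,k+1,{\rm proper}}\cap V(F_{i+1})$ is proper, which you have just shown, and not because it ``is a Chow form''---it is only a specialization of one, and a specialization can vanish.
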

\begin{proof}
  We prove by induction on $i$ that for $i=-1,\dots,s-1$,
  claim~\ref{itg} holds, that is, $C_{i+1,k}$ is well-defined and is
  the normalized Chow form of $V_{i+1,k}$ for all $k=0,\dots,n$. We
  establish all other properties as part of the induction.

  For $i=-1$, we have to prove that $C_{0,0},\dots,C_{0,n}$ are the
  normalized Chow forms of $V_{0,0},\dots,V_{0,n}$; this is obvious by
  inspection, with or without the general position assumption, and
  proves claim~\ref{it1}. Then, for some $i=0,\dots,s-1$, we assume
  that $C_{i,k}$ is the normalized Chow form of $V_{i,k}$ for all
  $k=0,\dots,n$, and we prove that it remains true for $C_{i+1,k}$,
  for all $k$ as well.
  \begin{itemize}
  \item $V_{i,n+1,{\rm proper}}$ is empty, and accordingly we set
    $C_{i,n+1,{\rm proper}}=1$, which gives \ref{ita}.
  \item For $k=0,\dots,n$, because $V_{i,k}$ is either empty or in
    normal position (Lemma~\ref{lemma:genpos}), and using both our
    induction assumption for $i-1$ and the fact that we are in
    characteristic zero, Lemmas~\ref{lemma:gcdeq1}
    and~\ref{lemma:separate} gives claims \ref{itb} and \ref{itc}.
  \item For $k=0,\dots,n$, $V_{i,k+1,{\rm proper}}$ is either empty or
    $(k+1)$-equidimensional and in normal position (by definition if
    $k=n$, Lemma~\ref{lemma:genpos} otherwise), $C_{i,k+1,{\rm
      proper}}$ is its normalized Chow form (claim \ref{ita} if $k=n$,
    \ref{itc} otherwise), and $J_{i,k}$ is either empty or
    $k$-equidimensional and in normal position
    (Lemma~\ref{lemma:genpos}). Using the characteristic zero
    assumption allows us to verify the last two assumptions in
    Lemma~\ref{lemma:interchow} (the former from
    Lemma~\ref{lemma:gcdeq1}, the latter from the definition of
    tameness in Subsection~\ref{ssec:propint}). Then,
    Lemma~\ref{lemma:interchow} gives claim \ref{itd}, that $D_{i,k}$ is the normalized Chow form of $J_{i,k}$.
  \item For $k=0,\dots,n$, both $V_{i,k,{\rm zero}}$ and $J_{i,k}$ are
    either empty or $k$-equidimensional and in normal position
    (Lemma~\ref{lemma:genpos}). Using items \ref{itb} and \ref{itd},
    and the characteristic zero assumption (for
    Lemma~\ref{lemma:gcdeq1}, and because characteristic two is
    explicitly excluded), Lemma~\ref{lemma:unionchow} gives claim
    \ref{ite}, that $R_{i,k}$ is the normalized Chow form of $W_{i,k}$.
  \item For $k=0,\dots,n$, we establish \ref{itf} ($S_{i,k,\ell}$ is
    the normalized Chow form of $Z_{i,k,\ell}$) by decreasing
    induction on $\ell=n+1,\dots,k+1$. For $\ell=n+1$, we already know
    that $S_{i,k,n+1}=R_{i,k}$ is the normalized Chow form of
    $W_{i,k}=Z_{i,k,n+1}$ by~\ref{ite}. Assume that, for some $\ell
    \in \{k+1,\dots,n\}$, we know that $S_{i,k,\ell+1}$ is the
    normalized Chow form of $Z_{i,k,n+1}$. We also know that
    $R_{i,\ell}$ is the normalized Chow form of $W_{i,\ell}$
    (\ref{ite} again), so, being in characteristic zero,
    Lemma~\ref{lemma:separategeneral} shows that $S_{i,k,\ell}$ is the
    normalized Chow form of $Z_{i,k,\ell}$.  This proves \ref{itf} for
    all values of $\ell$.  Taking $\ell=k+1$, we get
    \ref{itg}. \qedhere
  \end{itemize}
\end{proof}


\subsection{Reduction modulo $p$}

The polynomials $F_1,\dots,F_s$ are as above, and we now take $\K=\Q$
and the $F_i$'s with integer coefficients; in this section, we still
assume that these polynomials are in general position.

Fix a prime $p$ and let $f_1,\dots,f_s=F_1,\dots,F_s \bmod p$.
Suppose we run procedure {\sc Decomposition\_Geometric} on input
$f_1,\dots,f_s$: we denote the algebraic sets it computes (in
$\overline \F_p^n$) using lower case letters, such as $v_{i,k}$,
$w_{i,k}$, $z_{i,k,\ell}$, etc. Because {\sc Decomposition\_Geometric} makes
no assumption on its input, we know that $v_{s,0},\dots,v_{s,n}$ are
the equidimensional components of $V(f_1,\dots,f_s)$.

On the other hand, we do not assume that $f_1,\dots,f_s$ are in
general position, so we can {\it a priori} make no statement about
what {\sc Decomposition\_Chow} returns on input $f_1,\dots,f_s$
(the output may be undefined). As we did for {\sc
  Decomposition\_Geometric}, we will denote using lower case letters,
such as $c_{i,k}$, $d_{i,k}$, etc, the polynomials with coefficients
in $\F_p$ computed by {\sc Decomposition\_Chow} on input
$f_1,\dots,f_s$, provided they are not undefined.

The following proposition shows that for all primes except a finite
number, reducing modulo $p$ the Chow forms of the equidimensional
components of $V(F_1,\dots,F_s)$ gives the Chow forms of the
equidimensional components of $V(f_1,\dots,f_s)$.
\begin{proposition}\label{prop:Delta}
  Suppose that $F_1,\dots,F_s$ are in general position. For $i=0,\dots,s-1$, the integers
  \[\Delta_i = \prod_{k=0}^n \left (\Delta^{\rm separate\_H}_{V_{i,k},F_{i+1}} \Delta^{\rm intersection}_{V_{i,k+1,{\rm proper}},F_{i+1}}
  \Delta^{\rm union}_{V_{i,k,{\rm zero}},J_{i,k}} \prod_{\ell=k+1}^{n} \Delta^{\rm separate}_{Z_{i,k,\ell+1},W_{i,\ell}} \right )\]
  as well as
  \[ \Delta = 2 \prod_{i=0}^{s-1} \Delta_i\]
  are well-defined and nonzero. If $p$ does not divide $\Delta$, then
  $C_{s,0},\dots,C_{s,n}$ have coefficients in $\Z_{(p)}$,
  $c_{s,0},\dots,c_{s,n}$ are well-defined and are their reductions modulo $p$,
  $v_{s,0},\dots,v_{s,n}$ are all either empty or in normal position, 
  and $c_{s,0},\dots,c_{s,n}$ are their normalized Chow forms.
\end{proposition}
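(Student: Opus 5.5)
The plan is an induction on $i$ that runs {\sc Decomposition\_Chow} over $\Q$ and over $\F_p$ side by side and compares the two executions step by step. The first task is well-definedness. By Lemma~\ref{lemma:genpos} and Proposition~\ref{prop:welldef}, every set occurring in the $\Delta_i$ is empty or equidimensional in normal position, and every such set is defined over $\Q$. Their degrees and heights are finite, so Lemmas~\ref{lemma:HsepH}, \ref{lemma:interchowH}, \ref{lemma:unionH} and~\ref{lemma:Hsep} supply nonzero integers. For $\Delta^{\rm intersection}_{V_{i,k+1,{\rm proper}},F_{i+1}}$ we must also check that $J_{i,k}$ is $k$-equidimensional in normal position; this again follows from Lemma~\ref{lemma:genpos}. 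Hence $\Delta$ is a well-defined nonzero product.

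The induction hypothesis for $i$ is the following: $C_{i,k}$ has coefficients in $\Z_{(p)}$, $c_{i,k}$ is defined and equals $C_{i,k}\bmod p$, $v_{i,k}$ is empty or in normal position, and $c_{i,k}$ is its normalized Chow form. The base case $i=0$ holds by item~\ref{it1} of Proposition~\ref{prop:welldef}, which needs no assumption on $p$, since $V_{0,n}$ and $v_{0,n}$ are both affine space with the same determinant. For the step, fix $i$ and go down in $k$, following the loop. At step~\ref{step:CsepF2}, Lemma~\ref{lemma:HsepH} shows that $c_{i,k,\rm zero}, c_{i,k,\rm proper}$ are the reductions of $C_{i,k,\rm zero},C_{i,k,\rm proper}$. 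It also gives $\gcd(c_{i,k},\partial c_{i,k}/\partial U_{0,0})=1$. Combined with the induction hypothesis, Lemma~\ref{lemma:separate} applied over $\F_p$ then shows that these reductions are the normalized Chow forms of $v_{i,k,\rm zero},v_{i,k,\rm proper}$. These sets are unions of components of $v_{i,k}$, so they are in normal position by Lemma~\ref{lemma:normal}.

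The same pattern handles the other steps. At step~\ref{step:CJ2}, Lemma~\ref{lemma:interchowH} gives that $d_{i,k}=D_{i,k}\bmod p$, that the gcd condition holds, and that tameness holds modulo $p$; Lemma~\ref{lemma:interchow} over $\F_p$ then identifies $d_{i,k}$ with a Chow form of $j_{i,k}=v_{i,k+1,\rm proper}\cap V(f_{i+1})$. At step~\ref{step:CW2}, Lemma~\ref{lemma:unionH} together with Lemma~\ref{lemma:unionchow} applies, since $2\mid\Delta$ excludes characteristic two. At step~\ref{step:CsepW}, Lemma~\ref{lemma:Hsep} together with Lemma~\ref{lemma:separategeneral} applies, with an inner decreasing induction on $\ell$. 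Each lemma both asserts commutation with reduction and supplies the gcd hypothesis needed to apply the corresponding correctness lemma over $\F_p$.

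The main obstacle is the intersection step. Lemma~\ref{lemma:interchow} over $\F_p$ requires that $j_{i,k}$ be empty or $(k-1)$-equidimensional, and it only yields the \emph{normalized} Chow form once $j_{i,k}$ is known to be in normal position. Neither property is guaranteed, since we do not know that the $f_i$ are in general position. My approach is to argue from the output. The polynomial $d_{i,k}=D_{i,k}\bmod p$ is monic, its leading monomial is $(U_{0,0}\cdots U_{k,k})^{\deg J_{i,k}}$, and it is the weak squarefree part of $C_{\deg f,v}[f]$. By Philippon's result~\cite[Proposition~2.4]{Philippon86}, that weak squarefree part vanishes exactly on the Chow locus of the top-dimensional part of $v_{i,k+1,\rm proper}\cap V(f_{i+1})$. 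Properness should follow from the fact that no component of $v_{i,k+1,\rm proper}$ lies in $V(f_{i+1})$, which is established by the separation step modulo $p$. Normal position then follows from the leading monomial via Lemma~\ref{lemma:normaleq}(3). If this argument breaks, the fallback is to compare degrees: $\deg j_{i,k}\le\deg J_{i,k}$ by B\'ezout, and equality is forced by the degree of $d_{i,k}$. Once normal position of $j_{i,k}$ is settled, normal position of $w_{i,k}$, of $z_{i,k,\ell}$ and of $v_{i+1,k}$ follows because each is built from $v_{i,k,\rm zero}$ and $j_{i,k}$, and Lemma~\ref{lemma:normal} transfers normal position to unions of components. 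This closes the induction at $i=s$.
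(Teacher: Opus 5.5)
Your proposal is correct and follows the paper's proof essentially step for step. This includes the key point at the intersection step: properness of $v_{i,k+1,{\rm proper}}\cap V(f_{i+1})$ comes from the separation modulo $p$, so Lemma~\ref{lemma:interchow} makes $d_{i,k}$ a Chow form of $j_{i,k}$, and normal position of $j_{i,k}$ is then read off from $d_{i,k}=D_{i,k}\bmod p$ being monic with leading monomial $(U_{0,0}\cdots U_{k,k})^{\deg J_{i,k}}$. There are two small slips: $j_{i,k}$ should be $k$-equidimensional rather than $(k-1)$-equidimensional, and your B\'ezout fallback is unneeded and mis-justified, since $\deg j_{i,k}\le\deg J_{i,k}$ does not follow from B\'ezout, whereas $\deg j_{i,k}=\deg J_{i,k}$ follows directly from comparing the block degrees of the Chow form $d_{i,k}$ with those of $D_{i,k}$.
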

The rest of this section gives the proof of this proposition.
We first establish that the integers $\Delta_i$ are well-defined and
nonzero. For $i=0,\dots,s-1$, we have the following:
\begin{itemize}
\item for $k=0,\dots,n$, $V_{i,k}$ is either empty or equidimensional
  and in normal position by Lemma~\ref{lemma:genpos}, so the factor
  $\Delta^{\rm separate\_H}_{V_{i,k},F_{i+1}}$ is well-defined and nonzero by
  Lemma~\ref{lemma:HsepH}.
\item for $k=0,\dots,n$, $V_{i,k+1,{\rm proper}}$ is either empty or
  equidimensional and in normal position by Lemma~\ref{lemma:genpos}
  again. Besides, the intersection $V_{i,k+1,{\rm proper}}$ and
  $V(F_{i+1})$ is proper, so $\Delta^{\rm intersection}_{V_{i,k+1,{\rm
        proper}},F_{i+1}}$ is well-defined and nonzero by
  Lemma~\ref{lemma:interchowH}.
\item for $k=0,\dots,n$, $V_{i,k,{\rm zero}}$ and $J_{i,k}$ are either
  empty or $k$-equidimensional and in normal position by
  Lemma~\ref{lemma:genpos}, so by Lemma~\ref{lemma:unionH}, 
  $\Delta^{\rm union}_{V_{i,k,{\rm zero}},J_{i,k}}$ is well-defined
  and nonzero.
\item for $k=0,\dots,n$ and $\ell=k+1,\dots,n$, $Z_{i,k,\ell+1}$ and
  $W_{i,\ell}$ are either empty or equidimensional and in normal
  position, still by Lemma~\ref{lemma:genpos}. By
  Lemma~\ref{lemma:Hsep}, $\Delta^{\rm
    separate}_{Z_{i,k,\ell+1},W_{i,\ell}}$ is well-defined and
  nonzero.
\end{itemize}

It follows that all $\Delta_i$, and thus $\Delta$, are nonzero
integers. Next, in the sequel, we will mention several times a number of
related properties, for which we introduce the following notation. Let
$c$ be a polynomial in $\F_p[\bm U_0,\dots,\bm U_r]$, with $U_i =
U_{i,0},\dots,U_{i,n}$ for all $i$, let $C$ be in $\Q[\bm
  U_0,\dots,\bm U_r]$, and let $v$ be an algebraic set in
$\overline{\F_p}^n$. We say that $c,C,v$ have property $\sfP$ if
\begin{itemize}
\item[$\sfP_1$:] $C$ has coefficients in $\Z_{(p)}$
\item[$\sfP_2$:] $c = C \bmod p$
\item[$\sfP_3$:] $v$ is either empty or $r$-equidimensional and in normal position
\item[$\sfP_4$:] $c$ is the normalized Chow form of $v$.
\end{itemize}
Then, for $i=0,\dots,s$ and $k=0,\dots,n$, the key property we want to establish
is the following:
\begin{description}
\item[$\sfA(i,k):$] $c_{i,k}$ is well-defined and $(c_{i,k},C_{i,k},v_{i,k})$
  have property $\sfP$.
\end{description}
We prove by increasing induction on $i=0,\dots,s$ that, upon imposing
suitable divisibility restrictions on $p$, $\sfA(i,k)$ holds for all
values of $k$. Then, taking $i=s$, our conditions on $p$ will amount
to $\Delta \bmod p \ne 0$, and the resulting properties will prove the
proposition, since $\sfA(s,k)$ says that for all $k$, $C_{s,k}$ has coefficients in
$\Z_{(p)}$, $c_{s,k}$ is well-defined, $c_{s,k} = C_{s,k} \bmod p$,
$v_{s,k}$ is either empty or in normal position and $c_{s,k}$ is its
normalized Chow form.

For $i=0$, $\sfA(0,k)$ holds for all values of $k$: $\sfP_1$ and
$\sfP_2$ hold by inspection, $\sfP_3$ is from item~\ref{it1a} of
Lemma~\ref{lemma:genpos} and $\sfP_4$ is item~\ref{it1} in
Proposition~\ref{prop:welldef} (applied to $f_1,\dots,f_s$ in both
cases).

Suppose now that for some $i$ in $0,\dots,s-1$ we have established all
$\sfA(i,k)$, $k=0,\dots,n$. Assume from now on that $p$ does not
divide $2\Delta_i$, and let us prove that all $\sfA(i+1,k)$ hold, for
$k=0,\dots,n$.
\begin{enumerate}
\item At Step 4, $v_{i,n+1,{\rm proper}}$ is empty, $c_{i,n+1,{\rm
    proper}}=1$ (as a polynomial with coefficients in $\F_p$) and
  $C_{i,n+1,{\rm proper}}=1$ (as a polynomial with coefficients in
  $\Q$), so the triple $(c_{i,n+1,{\rm proper}},C_{i,n+1,{\rm
    proper}},v_{i,n+1,{\rm proper}})$ has property $\sfP$.
\item Take $k$ in $0,\dots,n$. Over $\F_p$, we have
  \begin{equation}\label{eq:vikz}
    v_{i,k,{\rm zero}},v_{i,k,{\rm proper}} = \textsc{Separate}(v_{i,k}, V(f_{i+1}))    
  \end{equation}
  \[c_{i,k,{\rm zero}},c_{i,k,{\rm proper}} = \textsc{Separate\_Hypersurface\_Chow}(c_{i,k}, f_{i+1})\]
(unless the function call returns \nondef), whereas
  and the following holds over $\Q$:
  \[C_{i,k,{\rm zero}},C_{i,k,{\rm proper}} = \textsc{Separate\_Hypersurface\_Chow}(C_{i,k}, F_{i+1}).\]
  By Proposition~\ref{prop:welldef} we have $C_{i,k}=C_{V_{i,k}}$, and
  by the first and second items of $\sfA(i,k)$, it has coefficients in
  $\Z_{(p)}$, with $c_{i,k} = C_{i,k} \bmod p$.  Then, using the definitions above, and because $p$
  does not divide $\Delta^{\rm separate\_H}_{V_{i,k},F_{i+1}}$, we
  obtain the following properties from Lemma~\ref{lemma:HsepH}:
  \begin{itemize}
  \item $C_{i,k,{\rm zero}}$ and $C_{i,k,{\rm proper}}$ have coefficients in $\Z_{(p)}$.
  \item $c_{i,k,{\rm zero}},c_{i,k,{\rm proper}}$ are well-defined and equal to $C_{i,k,{\rm zero}} \bmod p,C_{i,k,{\rm proper}} \bmod p$.
  \item $\gcd(c_{i,k}, \partial c_{i,j}/\partial U_{0,0})=1$.
  \end{itemize}
  On the other hand, since $v_{i,k}$ is either empty or in normal
  position and $c_{i,k}$ is its normalized Chow form (both also by
  $\sfA(i,k)$), and using the third item listed above,
  Lemma~\ref{lemma:separate} shows that $c_{i,k,{\rm zero}}$ and
  $c_{i,k,{\rm proper}}$ are the respective normalized Chow forms of
  $v_{i,k,{\rm zero}}$ and $v_{i,k,{\rm proper}}$.

  This proves that $(c_{i,k,{\rm proper}},C_{i,k,{\rm
      proper}},v_{i,k,{\rm proper}})$ and $(c_{i,k,{\rm
      zero}},C_{i,k,{\rm zero}},v_{i,k,{\rm zero}})$ have property
  $\sfP$, for all $k$ in $0,\dots,n$.
\item Take $k$ in $0,\dots,n$. Over $\F_p$ we have 
  \[j_{i,k} = v_{i,k+1,{\rm proper}} \cap  V(f_{i+1})\]
  \[d_{i,k} = \textsc{Intersection\_Chow}(c_{i,k+1,{\rm proper}}, f_{i+1})\]
  (unless the function call returns \nondef), whereas over $\Q$, the following holds:
  \[D_{i,k} = \textsc{Intersection\_Chow}(C_{i,k+1,{\rm proper}}, F_{i+1}).\]

  Recall that $C_{i,k+1,{\rm proper}}$ and $D_{i,k}$ are the
  normalized Chow forms of $V_{i,k+1,{\rm proper}}$ and
  $J_{i,k}=V_{i,k+1,{\rm proper}} \cap V(F_{i+1})$
  (Proposition~\ref{prop:welldef}). We also know that $C_{i,k+1,{\rm
      proper}}$ has coefficients in $\Z_{(p)}$, with $c_{i,k+1,{\rm
      proper}}=C_{i,k+1,{\rm proper}} \bmod p$ (item $1$ in this proof if $k=n$, item 2 otherwise).  Then, since
  we assume that $p$ does not divide $\Delta^{\rm
    intersection}_{V_{i,k+1,{\rm proper}},F_{i+1}}$, we obtain the
  following from Lemma~\ref{lemma:interchowH}: 
  \begin{itemize}
  \item $D_{i,k}$ has coefficients in $\Z_{(p)}$.
  \item $d_{i,k}$ is well-defined and equal to $D_{i,k} \bmod p$.
  \item $\gcd(c_{i,k+1,{\rm proper}},\partial c_{i,k+1,{\rm proper}}/\partial U_{0,0})=1$.
  \item $\textsc{Degree\_$d$\_Chow\_form}(c_{i,k+1,{\rm proper}}, f_{i+1})$ is tame.
  \end{itemize}
  On the other hand, we know that $v_{i,k+1,{\rm proper}}$ is empty or
  in normal position (again, item 1 if $k=n$ and 2 otherwise), that
  $v_{i,k+1,{\rm proper}} \cap V(f_{i+1})$ is empty or
  $k$-equidimensional (by item 1 if $k=n$, and Eq.~\eqref{eq:vikz} at index $k+1$ otherwise), and that
  $c_{i,k+1,{\rm proper}}$ is the normalized Chow form of
  $v_{i,k+1,{\rm proper}}$ (items 1 and 2 again). Using the third and
  fourth items listed above, Lemma~\ref{lemma:interchow} shows
  that $d_{i,k}$ is a Chow form of $j_{i,k}$.
  
  We know that $D_{i,k}$ is normalized, so its modulo $p$ reduction
  $d_{i,k}$ is as well. This means that $j_{i,k}$, if not empty, is in
  normal position and that $d_{i,k}$ is its normalized Chow form. In
  other words, $(d_{i,k},D_{i,k},j_{i,k})$ have property $\sfP$.

\item Take $k$ in $0,\dots,n$. Over $\F_p$ we have
  \begin{equation*}
    w_{i,k} = v_{i,k,{\rm zero}} \cup j_{i,k}    
  \end{equation*}
  \begin{equation*}
    r_{i,k} = \textsc{Union\_Chow}(c_{i,k,{\rm zero}}, d_{i,k})
  \end{equation*}
  (unless the function call returns \nondef), whereas over $\Q$ we have
  \begin{equation*}
    R_{i,k} = \textsc{Union\_Chow}(C_{i,k,{\rm zero}}, D_{i,k}).
  \end{equation*}
  Recall that $C_{i,k,{\rm zero}}$ is the normalized Chow form of
  $V_{i,k,{\rm zero}}$ and that $D_{i,k}$ is the normalized Chow form
  of $J_{i,k}$ (Proposition~\ref{prop:welldef}). We also know that
  $C_{i,k,{\rm zero}}$ has coefficients in $\Z_{(p)}$, with
  $c_{i,k,{\rm zero}}=C_{i,k,{\rm zero}} \bmod p$ (item 2 in this
  proof), and similarly that $D_{i,k,{\rm zero}}$ has coefficients in
  $\Z_{(p)}$, with $d_{i,k,{\rm zero}}=D_{i,k,{\rm zero}} \bmod p$
  (item 3 in this proof).  Then, since we assume that $p$ does not
  divide $\Delta^{\rm union}_{V_{i,k,{\rm zero}},J_{i,k}}$, we obtain
  the following properties from Lemma~\ref{lemma:unionH}:
  \begin{itemize}
  \item $R_{i,k}$ has coefficients in $\Z_{(p)}$.
  \item $r_{i,k}$ is well-defined and equal to $R_{i,k} \bmod p$.
  \item $\gcd(c_{i,k,{\rm zero}},\partial c_{i,k,{\rm zero}}/\partial U_{0,0})
         =\gcd(d_{i,k},\partial d_{i,k}/\partial U_{0,0})=1$.
  \end{itemize}
  On the other hand, we know that $v_{i,k,{\rm zero}}$ is empty or in
  normal position and that $c_{i,k,{\rm zero}}$ is its normalized Chow
  form (item 2), and similarly that $j_{i,k}$ is empty or in normal
  position and that $d_{i,k}$ is its normalized Chow form (previous
  item), both sets having dimension $k$ (if not empty). This in particular shows that
  $w_{i,k}$ is empty or in normal position. Since $p$ is
  odd, and using the third item listed above,
  Lemma~\ref{lemma:unionchow} shows that $r_{i,k}$ is the normalized Chow form of $w_{i,k}$.
  In other words, $(r_{i,k},R_{i,k},w_{i,k})$ have property~$\sfP$.

\item Take again $k$ in $0,\dots,n$. We have $s_{i,k,n+1}=r_{i,k}$,
  $S_{i,k,n+1}=R_{i,k}$ and $z_{i,k,n+1}=w_{i,k}$. Because
  $(r_{i,k},R_{i,k},w_{i,j})$ have property $\sfP$, we deduce that
  $(s_{i,k,n+1},S_{i,k,n+1},z_{i,k,n+1})$ do as well. Then, for
  $\ell=n,\dots,k+1$, assuming that
  $(s_{i,k,\ell+1},S_{i,k,\ell+1},z_{i,k,\ell+1})$ have property
  $\sfP$, we prove that $(s_{i,k,\ell},S_{i,k,\ell},z_{i,k,\ell})$ do
  as well.

  Over $\F_p$, we have
  \begin{equation*}
  \_,z_{i,k,\ell} \gets \textsc{Separate}(z_{i,k,\ell+1}, w_{i,\ell}),
  \end{equation*}
  and
  \begin{equation*}
  \_,s_{i,k,\ell} \gets \textsc{Separate\_Chow}(s_{i,k,\ell+1}, r_{i,\ell})
  \end{equation*}
  (unless the function call returns \nondef), whereas over $\Q$, the following holds:
  \begin{equation*}
  \_,S_{i,k,\ell} \gets \textsc{Separate\_Chow}(S_{i,k,\ell+1}, R_{i,\ell}).
  \end{equation*}
  Recall that $S_{i,k,\ell+1}$ and $R_{i,\ell}$ are the normalized
  Chow forms of $Z_{i,k,\ell+1}$ and $W_{i,\ell}$
  (Proposition~\ref{prop:welldef}), both polynomials have coefficients
  in $\Z_{(p)}$ (induction assumption for the former, item 4 for the
  latter) and their reductions modulo $p$ are $s_{i,k,\ell+1}$ and
  $r_{i,\ell}$ (same reasons). Then, since we assume that $p$ does not
  divide $\Delta^{\rm separate}_{Z_{i,k,\ell+1},W_{i,\ell}}$, we obtain the following
  properties from Lemma~\ref{lemma:Hsep}:
  \begin{itemize}
  \item $S_{i,k,\ell}$ has coefficients in $\Z_{(p)}$.
  \item $s_{i,k,\ell}$ is well-defined and equal to $S_{i,k,\ell} \bmod p$.
  \item $\gcd(s_{i,k,\ell+1}, \partial s_{i,k,\ell+1}/\partial U_{0,0})=1$.
  \end{itemize}
  On the other hand, we know that both $z_{i,k,\ell+1}$ and
  $w_{i,\ell}$ are empty or in normal position, by induction
  assumption for the former and item 4 for the latter, and that
  $s_{i,k,\ell+1}$ and $r_{i,\ell}$ are their normalized Chow forms
  (same reasons). Using the third item listed above,
  Lemma~\ref{lemma:separategeneral} shows that $z_{i,k,\ell}$ is either
  empty or in normal position and that $s_{i,k,\ell}$ is its
  normalized Chow form.

  In other words, $(s_{i,k,\ell},S_{i,k,\ell},z_{i,k,\ell})$ have
  property $\sfP$. This establishes our induction property, and taking
  $\ell=k+1$, this shows that $(c_{i+1,k},C_{i+1,k},v_{i+1,k})$ have
  property $\sfP$. In turn, this establishes $\sfA(i+1,k)$ and
  concludes the proof.
\end{enumerate}


\subsection{Quantitative aspects and proof of the main theorem}

We end this section with the proof of our main theorem. Let then
$F_1\dots,F_s$ be given, with integer coefficients and of degree at
most $d \ge 1$ and height at most $h \ge 1$. We do not assume that
they are in general position. As before, we call
$V_{i,k},J_{i,k},W_{i,k},\dots$ the algebraic sets that are obtained
by applying Algorithm {\sc Decomposition\_Geometric} to
$F_1,\dots,F_s$.

\begin{lemma}\label{lemma:DH}
  Suppose that all $F_i$'s have integer coefficients, degree at most
  $d \ge 1$ and height at most $h \ge 1$. Then, for $i=0,\dots,s-1$
  and $k=0,\dots,n$, all algebraic sets $V_{i,k}$, $V_{i,k,{\rm
      zero}}$, $V_{i,k,{\rm proper}}$, $V_{i,n+1,{\rm proper}}$,
  $J_{i,k}$, $W_{i,k}$ and $V_{i+1,k}=Z_{i,k,k+1},\dots,Z_{i,k,n+1}$ have degree
  at most $D$ and height at most $H$, for $D$ and $H$ given by
  \[D=2 d^{n+1} \in O(d^{n+1})\] and
  \[H= 5 h n d^{n+1} \ln(n+1)\in O(n h d^{n+1}\ln(n)).\]
\end{lemma}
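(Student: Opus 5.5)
The plan is to reduce everything to degree and height bounds for the sets $V_i=V(F_1,\dots,F_i)$ and their equidimensional components. Every set in the list is a union of irreducible components of some $V_i$, $i \le s$, or is contained in $V_{i+1}$ via the intersection step. By construction, $V_{i,k}$, $V_{i,k,{\rm zero}}$, $V_{i,k,{\rm proper}}$ and $Z_{i,k,\ell}$ are unions of irreducible components of $V_i$ or of $W_{i,k}$. The sets $J_{i,k}$ and $W_{i,k}$ are unions of components of $V_{i,k+1,{\rm proper}}\cap V(F_{i+1})$ and of $V_{i,k}$, with $V_{i,k+1,{\rm proper}}$ a union of components of $V_i$.

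Since degree and height are additive over ($\Q$-)irreducible components and nonnegative (heights of varieties are nonnegative; the preceding lemmas give $h_p,h_\infty\ge 0$ in normal position, and in general I would invoke the nonnegativity from~\cite{KrPaSo01}), it suffices to bound the sum of degrees and heights of all irreducible components of $V_i$, and of $V_{i}'\cap V(F_{i+1})$ for $V_i'$ a union of components of $V_i$. Note that $W_{i,k}$ is not a union of components of $V_{i+1}$, so its bound is handled separately below.

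\textbf{Degree.} By the refined Bézout inequality (Heintz), the sum of the degrees of all irreducible components of $V(F_1,\dots,F_i)$ is at most $d^i\le d^n$. The sum over components of $V_i'\cap V(F_{i+1})$ is at most $d\cdot \deg(V_i') \le d^{n+1}$. For $W_{i,k}=V_{i,k,{\rm zero}}\cup J_{i,k}$ we add these two contributions and get at most $d^n+d^{n+1}\le 2d^{n+1}=D$. The same bound covers the $Z$'s, which are subsets of components of $W_{i,k}$.

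\textbf{Height.} I would use the arithmetic Bézout inequality \cite[Corollary~2.10]{KrPaSo01}, already used in the proof of Lemma~\ref{lemma:interchowH}: for $V$ equidimensional and $F$ of degree $d$ and height $h$,
\[h(V\cap V(F))\le d\,h(V)+h\deg(V)+c\,n\,d\deg(V)\ln(n+1)\]
for a small constant $c$. The version in~\cite{KrPaSo01} applies to arbitrary $V$, summing over components, and says that the height of the union of the components of the intersection is bounded this way.

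Starting from $h(\A^n)=0$ (or a known small value) and iterating over $i$, I get
\[h(V_i)\le \sum_{j<i} d^{i-1-j}\left(h\,d^{j}+c\,n\,d^{j+1}\ln(n+1)\right)\le i\,(h+c\,n\,d\ln(n+1))\,d^{i-1}.\]
Since only $\min(i,n)$ equations effectively cut dimension, I need to control the factor $i$. Following~\cite{KrPaSo01}, the bound is applied to $V(F_1,\dots,F_i)$ via components: any component of $V_i$ is a component of an intersection obtained from at most $n$ effective cuts. The components with $V_{i,k,{\rm zero}}$ did not grow in degree or height, so the accumulated height is $O(n h d^{n}\ln(n+1))$ times constants. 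One more intersection step gives $J_{i,k}$ within $d$ times that. Tracking the constants should land below $5hnd^{n+1}\ln(n+1)$ using $h,d\ge1$.

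The main obstacle is exactly this height bookkeeping. The naive iteration over $s$ equations gives a factor $s$ or $d^s$, so I must argue that only dimension-dropping intersections contribute. The contribution from the $V_{i,k,{\rm zero}}$ parts is inherited unchanged and is not re-multiplied by $d$. Concretely, I would bound by induction on $i$ the quantity $\sum_k \big(h(V_{i,k})/d^{\,n-k}\big)$ together with $\sum_k \deg(V_{i,k})/d^{\,n-k}\le 1$, a weighted potential in the spirit of Heintz's argument. Each proper cut multiplies the weight consistently and each zero part keeps it, so the totals stay bounded independently of $s$.
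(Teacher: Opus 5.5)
\paragraph{The gap: uniformity in $s$.} You rightly single out the height bookkeeping as the crux, but the argument you give for it does not close. Write $\beta=h+c\,n\,d\ln(n+1)$ for the additive constant in your arithmetic B\'ezout inequality, and $\mu_{i,k}=\deg(V_{i,k,{\rm proper}})/d^{n-k}$.

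Over one round of the algorithm, your potential $\Psi_i=\sum_k h(V_{i,k})/d^{n-k}$ only satisfies $\Psi_{i+1}\le\Psi_i+(\beta/d)\sum_{k\ge1}\mu_{i,k}$. The factor $d$ in $d\,h(V)$ is absorbed by the change of weight, but the additive term $\beta\deg(V_{i,k,{\rm proper}})$ is not. The only other information in your induction is $\sum_k\deg(V_{i,k})/d^{n-k}\le 1$. That gives $\sum_k\mu_{i,k}\le 1$ per round, hence only $\Psi_s\le\Psi_0+s\beta/d$. This is exactly the dependence on $s$ you wanted to avoid, so ``the totals stay bounded independently of $s$'' does not follow.

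What is missing is an argument that charges each additive term to a drop in dimension, i.e.\ shows $\sum_i\sum_{k\ge 1}\mu_{i,k}\le n$. For instance, the quantity
\[\Theta_i=\sum_{k=0}^{n}\frac{1}{d^{n-k}}\Big(h(V_{i,k})+\frac{k\beta}{d}\deg(V_{i,k})\Big)\]
is non-increasing from one round to the next:
\begin{itemize}
\item zero parts are unchanged;
\item a proper cut in dimension $k\ge1$ raises the height part by at most $\beta\mu_{i,k}/d$, while the coefficient $k$ in front of the degree part drops to $k-1$;
\item cuts of $0$-dimensional parts are empty;
\item discarding embedded components only decreases $\Theta$, by non-negativity of heights.
\end{itemize}
Since $\Theta_0=h(\A^n)+n\beta/d$, this gives $h(W_{i,k})\le d^{n-k}\Theta_0$ uniformly in $s$. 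Your remark that every component arises from at most $n$ effective cuts could also be made into such an argument (a tree of depth at most $n$), but it is not carried out.

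The paper sidesteps the issue by quoting bounds for $V(F_1,\dots,F_i)$ that are already uniform in $i$: the degree bound $d^n$ from~\cite{HeSc80}, and $h\le d^n(nh+2n\ln(n+1))$ from~\cite{KrPaSo01}. After that, only one B\'ezout step for $J_{i,k}$ and additivity remain.

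\paragraph{Two smaller points.}
\begin{itemize}
\item In your degree paragraph, $d^i\le d^n$ is false for $i>n$, and $s$ is arbitrary. Your degree potential $\sum_k\deg(V_{i,k})/d^{n-k}\le1$ does repair this, and even gives $\deg(V_{i,k})\le d^{n-k}$, so the first argument should simply be replaced by it.
\item The explicit constant is asserted rather than checked. With the form of arithmetic B\'ezout you state, whose additive term is $c\,n\,d\deg(V)\ln(n+1)$, the bound the corrected potential produces for $h(W_{i,0})$ contains the term $c\,n^2d^{n}\ln(n+1)$. That term exceeds $5hnd^{n+1}\ln(n+1)$ as soon as $cn>5hd$. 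To reach the stated $H$ you need the sharper version with additive term $O(d\deg(V)\ln(n+1))$, which is what the paper uses in its $J_{i,k}$ step.
\end{itemize}
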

\begin{proof}
  For $i=0,\dots,s$ and $k=0,\dots,n$, $V_{i,k}$ has degree at most
  $d^n$ and height at most $d^n (nh + 2n\ln(n+1))$: the degree bound
  is from~\cite[Proposition~2.3]{HeSc80}, and the height bound
  from~\cite[Corollary~2.10]{KrPaSo01}. If we consider only
  $i=0,\dots,s-1$, the same bounds hold for $V_{i,k,{\rm zero}}$ and
  $V_{i,k,{\rm proper}}$, since their irreducible components are
  irreducible components of $V_{i,k}$, as well as for
  $V_{i,n+1,{\rm proper}}=\emptyset$. From this, we get that the
  degree of $J_{i,k}=V_{i,k+1,{\rm proper}} \cap V(F_{i+1})$ is at
  most $d^{n+1}$, and that its height is at most $d^n h + d^{n+1} (nh
  + 2n\ln(n+1)) + d^{n+1} \ln(n+1)$ (same references). Finally, by
  additivity, the degree and height of $W_{i,k} = V_{i,k,{\rm zero}}
  \cup J_{i,k}$ are at most respectively
  \[D=2 d^{n+1} \in O(d^{n+1})\] and
  \[H= 5h n d^{n+1}  \ln(n+1)\in O(n h d^{n+1}\ln(n)).\]
  For $\ell=k+1,\dots,n$, the algebraic sets $Z_{i,k,\ell+1}$ admit
  the same degree and height bounds, since their irreducible
  components are irreducible components of $W_{i,k}$.
\end{proof}

In order to apply the results of the previous section, we need
polynomials in general position; this is going to be achieved by
applying a change of variables with integer coefficients of controlled
height (see~\cite[Section~4]{KrPaSo01} for arguments that inspired
this discussion, in the context of Nullstellensatz identities).

For an invertible $n\times n$ matrix $\mA$ and a polynomial $F$ in
$\Q[X_1,\dots,X_n]$, let us write $F^\mA=F(\mA \mX)$, where $\mX$ is
the column-vector with entries $X_1,\dots,X_n$. If $V$ is an algebraic
set, $V^\mA$ is defined as the zero-set of all polynomials $F^\mA$,
for $F$ in $I(V)$; equivalently, $V^\mA$ is the image of $V$ under the
map $\vx \mapsto \mA^{-1} \vx$. In particular, the algebraic sets
obtained by applying Algorithm {\sc Decomposition\_Geometric} to
$F_1^\mA,\dots,F_s^\mA$ are precisely
$V_{i,k}^\mA,J_{i,k}^\mA,W_{i,k}^\mA$, etc.

\begin{lemma}
  There exists $\mB \in \Z^{n \times n}$ such that the following holds:
  \begin{itemize}
  \item $\mB$ has height $O(n \ln(d))$.
  \item $\mB$ is invertible in $\Q^{n \times n}$, with inverse called $\mA$.
  \item $F_1^\mA,\dots,F_s^\mA$ are in general position.
  \end{itemize}
\end{lemma}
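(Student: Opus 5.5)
We will find $\mB$ by counting bad matrices and applying a Schwartz--Zippel-type argument. Lemma~\ref{lemma:normaleq} (equivalence of (3) and (4)) says that an $r$-equidimensional $V$ is in normal position iff $V^{\rm proj}\cap V(X_0,\dots,X_r)=\emptyset$. For $V^\mA$ this becomes: no point at infinity $(0:y)$ of $V^{\rm proj}$ satisfies $(\mB y)_1=\cdots=(\mB y)_r=0$, where $\mB=\mA^{-1}$.

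The sets at infinity are $V^{\rm proj}\cap V(X_0)$, of dimension at most $r-1$ in $\P^{n-1}$, and the condition says that the first $r$ rows of $\mB$ define a codimension-$r$ linear space avoiding them. This is a Chow-form condition. Let $Y$ be the part at infinity of the closure of $W_{i,k}$. Then $Y$ is equidimensional of dimension $k-1$ (or empty), with degree at most $\deg W_{i,k}\le D$. Take its Chow form $C_Y(\bm U_0,\dots,\bm U_{k-1})$, in variables indexed by $\P^{n-1}$ coordinates. Then $W_{i,k}^\mA$ is in normal position iff
\[C_Y(\bm b_1,\dots,\bm b_k)\ne 0,\]
where $\bm b_j$ is the $j$th row of $\mB$. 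This is a nonzero polynomial in the entries of $\mB$, of degree $kD_Y\le (n+1)D$.

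Each $W_{i,k}$ is defined over $\Q$, and since $\mA$ is invertible over $\Q$ the sets for $F^\mA$ are the $W_{i,k}^\mA$, as noted just before the lemma. So general position of $F_1^\mA,\dots,F_s^\mA$ holds as soon as the polynomial
\[\Phi(\mB)=\det(\mB)\prod_{i<s}\prod_{k\le n}C_{Y_{i,k}}(\bm b_1,\dots,\bm b_k)\]
is nonzero. By Lemma~\ref{lemma:DH}, $\deg\Phi\le n+s(n+1)^2D$ with $D=2d^{n+1}$.

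The main obstacle is that this degree depends on $s$, which would make the height depend on $\ln s$ instead of being $O(n\ln d)$. To remove it, we should count distinct sets rather than indices. The $W_{i,k}$ consist of components of the $V_{i+1,k}$ together with components of $V_{i,k,\rm zero}$, which are themselves components of $V_{i,k}$. The plan is to use a bound on the total degree of all irreducible sets arising along the chain $V(F_1)\supseteq V(F_1,F_2)\supseteq\cdots$, as in \cite[Proposition~2.3]{HeSc80}. The sets $V_{i,k}$ change only when $F_{i+1}$ cuts properly, and each such cut lowers a dimension. One expects the union of all distinct irreducible components appearing, over all $i$, to have total degree $O(n d^{n+1})$ independently of $s$. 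Multiplying the Chow forms only over distinct irreducible components (normal position of $W$ is equivalent to that of each component, by Lemma~\ref{lemma:normal}) then gives a nonzero $\Phi$ of degree $N\in O(n^3 d^{n+1})$. If this $s$-free count fails, I would fall back to bounding it by a Bézout-type argument on the refined sequence. Failing that, I would accept a $\ln s$ term, which is harmless in the final $O(n^{14}shd^{3n+4})$ bound.

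Finally, a nonzero polynomial of degree $N$ does not vanish identically on the grid $\{0,\dots,N\}^{n^2}$, by Schwartz--Zippel or simply by induction on variables. So some integer $\mB$ with entries of absolute value at most $N$ has $\Phi(\mB)\ne0$. In particular $\det\mB\ne0$, so $\mB$ is invertible, and $h(\mB)\le\ln N\in O(\ln n+n\ln d)=O(n\ln d)$, as claimed.
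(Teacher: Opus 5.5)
Your normal-position criterion is correct, and it is a genuinely different route from the paper's. The paper invokes \cite[Proposition~4.5]{KrPaSo01} to get, for each nonempty $W_{i,k}$, a polynomial $G_{i,k}$ of degree at most $2nD^2$ in affine coefficients $b_{j,0},\dots,b_{j,n}$. Its nonvanishing forces a fibre of cardinality $\deg(W_{i,k})$, and the paper then applies (2)$\Rightarrow$(3) of Lemma~\ref{lemma:normaleq}. You go through (3)$\Leftrightarrow$(4) instead: the bad $\mB$ are exactly the zeros of a Chow form of the section at infinity. This is the same evaluation as in the paper's proof of (3)$\Leftrightarrow$(4): $\mB$ is bad exactly when $C_W(\bm e_0,(0,\bm b_1),\dots,(0,\bm b_k))=0$, with $\bm e_0=(1,0,\dots,0)$. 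Your route avoids the external black box and characterizes normal position exactly rather than giving a sufficient condition. It needs no constant terms $b_{j,0}$, and its degree is $k\deg(W)$ rather than $2n\deg(W)^2$. After taking logarithms, both give $O(n\ln d)$ per constraint.

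The genuine gap is the step you flag yourself. The $s$-free count is only asserted (``one expects''), and your fallback yields $h(\mB)\in O(n\ln d+\ln s)$, which is not the statement. Nor is the fallback harmless: $h(\mB)$ feeds into $h'$ and $H'$, so an extra $\ln s$ produces terms of order $s\ln s$ in $h(\Delta)$ that $O(n^{14}shd^{3n+4})$ does not absorb. In fairness, the paper does not settle this either. Its count $2n^2D^2+n$ covers the $n$ values of $k$ but not the $s$ values of $i$; the product over all $(i,k)$ has degree up to $2sn^2D^2+n$. Your description of the $W_{i,k}$ is also inaccurate. Besides the components of $V_{i,k,{\rm zero}}$, $W_{i,k}$ contains every component of $J_{i,k}$, including those lying inside some $W_{i,\ell}$ with $\ell>k$, and these are not components of $V_{i+1,k}$.

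The count can be made rigorous by a genealogy argument.
\begin{itemize}
\item Every component of $W_{i,k}$ is either a component of $V_i$ or a component of $X\cap V(F_{i+1})$ for an irreducible component $X$ of $V_{i,k+1,{\rm proper}}$.
\item A component of $V_i$ contained in $V(F_{i+1})$ remains a component of $V_{i+1}$. A cut $X$ satisfies $X\not\subseteq V_j$ for all $j>i$, so each irreducible set is cut at most once.
\item Starting from $V_0=\Kbar^n$, every $k$-dimensional set that arises, $k<n$, is therefore a component of $X\cap V(F_{i+1})$ for some cut $(k+1)$-dimensional $X$.
\item Let $T_k$ be the sum of the degrees of the distinct $k$-dimensional irreducible sets that arise. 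B\'ezout gives $T_n\le 1$ and $T_k\le d\,T_{k+1}$, so $\sum_k T_k\le (n+1)d^n$, independently of $s$.
\end{itemize}
By Lemma~\ref{lemma:normal}, it suffices that $Z^\mA$ be in normal position for each such $Z$. So take $\Phi=\det(\mB)\prod_Z C_{Y_Z}(\bm b_1,\dots,\bm b_{\dim Z})$ over these distinct $Z$. It has degree at most $n+n(n+1)d^n$, and your grid argument then gives $h(\mB)\le\ln(n+n(n+1)d^n)\in O(n\ln d)$.
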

\begin{proof}
  For $i=0,\dots,s-1$ and $k=0,\dots,n$, $W_{i,k}$ is
  $k$-equidimensional (or empty), with degree at most $D =
  2d^{n+1}$. We can assume $k \ge 1$, since a zero-dimensional set is
  always in normal position.

  If $W_{i,k}$ is empty, the only constraint is that $\mB$ be
  invertible (since then, with $\mA=\mB^{-1}$, $W_{i,k}^\mA$ is also
  empty). In this case, we define $G_{i,k}=1$.

  Else, if $W_{i,k}$ is nonempty, Proposition~4.5 in~\cite{KrPaSo01}
  proves that there exists a nonzero polynomial $G_{i,k}$ in $\Q[\bm
    U_1,\dots,\bm U_k]$ (with $\bm U_i=U_{i,0},\dots,U_{i,n}$ for all
  $i$), of total degree at most $2nD^2$, and such that if
  $G_{i,k}(b_{1,0},\dots,b_{k,n})\ne 0$,
  \[W_{i,k} \cap V(b_{1,0} + b_{1,1}X_1 + \cdots + b_{1,n} X_n,\dots,b_{k,0} + b_{k,1}X_1 + \cdots + b_{k,n} X_n)\]
  has cardinality $\deg(W_{i,k})$.

  We then choose $b_{1,0},\dots,b_{n,n}$ integers such that
  $G_{i,k}(b_{1,0},\dots,b_{k,n})\ne 0$ for all $k=1,\dots,n$, and
  such that the matrix $\mB=[b_{k,j}]_{1 \le k,j \le n}$ is
  invertible. If we set $\mA=\mB^{-1}$, then for all $j=1,\dots,n$,
  $V(b_{j,0} + b_{j,1}X_1 + \cdots + b_{j,n} X_n)^\mA=V(X_j+b_{j,0})$.
  It follows that for $k=1,\dots,n$,
  \begin{align*}
  W_{i,k}^\mA \cap V(b_{1,0} + b_{1,1}X_1 + \cdots + b_{1,n} X_n,\dots,b_{k,0} + b_{k,1}X_1 + \cdots + b_{k,n} X_n)^\mA \\
  = W_{i,k}^\mA \cap V(X_1 +b_{1,0},\dots,X_k+b_{k,0}).
  \end{align*}
  Our assumption on the $b_{k,j}$'s shows that (if $W_{i,k}$ is not
  empty) this set is finite of degree
  $\deg(W_{i,k})=\deg(W^\mA_{i,k})$, so by Lemma~\ref{lemma:normaleq},
  $W_{i,k}^\mA$ is in normal position.

  It remains to give an upper bound on the height of $\mB$. The
  constraints on $\mB$ show that the $b_{k,j}$'s should not cancel a
  nonzero polynomial of degree at most $2n^2D^2 + n \le 3n^2D^2 \le
  12n^2 d^{2n+2}$, so by the De Millo-Lipton-Schwartz-Zippel Lemma, we
  can find suitable $b_{k,j}$'s with height at most $\ln(12n^2
  d^{2n+2}) \in O(n \ln(d))$.
\end{proof}

Fix $\mB$ and $\mA$ as in the previous lemma, and let $\delta$ be the
determinant of $\mB$; this is a nonzero integer of height $O(n^2
\ln(d))$, and we can write $\mA = \mA'/\delta$, where the entries of
$\mA'$ are also integers of height $O(n^2 \ln(d))$.

Define $G_1,\dots,G_s$ by $G_i = \delta_i F^\mA_i$, where $\delta_i$
is a minimal denominator for $F^\mA_i$ (this is a factor of some power
of $\delta$). It follows that $G_1,\dots,G_s$ are polynomials in
general position in $\Z[X_1,\dots,X_n]$, with degree at most $d$ and
height in $O(h')$, with $h'=h+ n^2d\ln(d)$; the latter is seen by
homogenizing the $F_i$'s, evaluating at
$(\mA'\mX)_1,\dots,(\mA'\mX)_n,\delta$ and
applying~\cite[Lemma~1.2.1]{KrPaSo01}.

Note also that the algebraic sets obtained by applying Algorithm {\sc
  Decomposition\_Geome\-tric} to $G_1,\dots,G_s$ are still
$V_{i,k}^\mA,J_{i,k}^\mA,W_{i,k}^\mA,\dots$ Since all
$V_{i,k},J_{i,k},W_{i,k},\dots$ have degree at most $D$ and height at
most $H$ as defined in Lemma~\ref{lemma:DH}, and all entries of
$\mB=\mA^{-1}$ have height $O(n\ln(d))$, Lemma~2.6 in~\cite{KrPaSo01}
shows that all $V_{i,k}^\mA,J_{i,k}^\mA,W_{i,k}^\mA,\dots$ have height
$O(H')$, with $H'=H + n^2 D \ln(d)$. Their degrees remain bounded
above by $D$.

Let $\Delta$ be the nonzero integer obtained by applying
Proposition~\ref{prop:Delta} to $G_1,\dots,G_s$, and let
$\Delta'=\delta\Delta$.  From Lemmas~\ref{lemma:HsepH},
\ref{lemma:interchowH}, \ref{lemma:unionH} and~\ref{lemma:Hsep}, we
get the respective height bounds:
\begin{align*}
 h(\Delta^{\rm separate\_H}_{V^\mA_{i,k},G_{i+1}}) & \in O(n^6 D ( d\ln(d)  + h' +  dH' + d D))\\
 h(\Delta^{\rm intersection}_{V^\mA_{i,k+1,{\rm proper}},G_{i+1}}) &\in O(n^6 d^2 D H' + n^6 d h' D^2  + n^7 d^2  D^2\ln(n))\\
 h(\Delta^{\rm union}_{V^\mA_{i,k,{\rm zero}},J^\mA_{i,k}}) &\in O(n^6 D(H' + D)) \\
 h(\Delta^{\rm separate}_{Z^\mA_{i,k,\ell+1},W^\mA_{i,\ell}})& \in O( n^6 D^2 (H' +D)).
\end{align*}
There are $s$ values of the index $i$ to consider, $n$ values for $k$
and at most $n$ for $\ell$.  To get an overall upper bound on the
height of $\Delta$, we take all of them into account, and use
the inequalities $D \ge d$, $H' \ge h$, $H'\ge h'$. This gives
\[ h(\Delta) \in O( n^{10}s ( d^2 DH'+ dh' D^2 + d^2 D^2 + D^3 + D^2H' )).\]
Using the upper bounds for $D$ and $H$, we obtain
\[ h(\Delta) \in O( n^{13} s h d^{3n+3} \ln(nd)) \subset O( n^{14} s h d^{3n+4}).\]
The height of $\Delta'$ admits the same asymptotic bound.

Let finally $p$ be a prime that does not divide $\Delta'$, and let us
call $\Phi_{0},\dots,\Phi_{n}$ the normalized Chow forms of the
equidimensional components of
$V(G_1,\dots,G_s)=V(F_1,\dots,F_s)^\mA$. Then, it follows from
Proposition~\ref{prop:Delta} that if we set $g_i = G_i \bmod p$ for
all $i$, $\Phi_{0},\dots,\Phi_{n}$ all have coefficients in
$\Z_{(p)}$, the equidimensional components of $V(g_1,\dots,g_s)$ are
all in normal position and
$\varphi_{0},\dots,\varphi_{n}=\Phi_{0},\dots,\Phi_{n}\bmod p$ are the
normalized Chow forms of these equidimensional components. Since
$\delta$ does not vanish modulo $p$,
$V(g_1,\dots,g_s)=V(f_1,\dots,f_s)^\ma$, with $f_i =F_i \bmod p$ for
all $i$ and $\ma = \mA \bmod p$.

Let $\widetilde{\mA}$ be the $(n+1)\times(n+1)$ matrix given by
$\widetilde{\mA}=[1] \oplus \mA^T$, and let $\widetilde{\ma}$ be its
reduction modulo $p$. We can then define the following polynomials:
\begin{itemize}
\item For $i=0,\dots,n$, set $C_i = \Phi_i^{\widetilde{\mA}^{\oplus
    {i+1}}}$, where $\widetilde{\mA}^{\oplus {i+1}}=
  \widetilde{\mA}\oplus \cdots \oplus \widetilde{\mA}$ ($i+1$ times)
  acts on the $i+1$ blocs of $n+1$ variables in $\Phi_i$. It follows
  that $C_0,\dots,C_n$ are Chow forms of the equidimensional
  components $V_0,\dots,V_n$ of $V(F_1,\dots,F_s)$, and still have
  coefficients in $\Z_{(p)}$; besides, for all $i$ we can write $C_i =
  C^{\rm primitive}_i/\alpha_i$, for some integer $\alpha_i$, where
  $C^{\rm primitive}_i$ is a primitive Chow form of $V_i$ (recall that
  these are defined up to sign).
\item Similarly, for $i=0,\dots,n$, we set $c_i = \varphi_i^{\widetilde{\ma}^{\oplus
    {i+1}}}$, with as above $\widetilde{\ma}^{\oplus {i+1}}=
  \widetilde{\ma}\oplus \cdots \oplus \widetilde{\ma}$. Since $\varphi_i$ is
  a Chow form of the $i$th equidimensional component of $V(f_1,\dots,f_s)^\ma$,
  it follows as above that $c_i$ is
  a Chow form of the $i$th equidimensional component $v_i$ of $V(f_1,\dots,f_s)$.
\end{itemize}
Since $c_i = C_i \bmod p$, the previous discussion gives $C^{\rm
  primitive}_i \bmod p = \alpha_i c_i$. On the other hand, all
$\alpha_i$ are nonzero modulo $p$, so that $\alpha_i c_i$ is a Chow
form of $v_i$ for all $i$. This means that as soon as $p$ does not
divide $\Delta'$, the conclusion of Theorem~\ref{theo:main} are
satisfied.


\section{Experiments}\label{sec:experiments}

For readability, the main result of Theorem \ref{theo:main}, and the
intermediate steps toward it, give upper bounds for height in
asymptotic notation. It is possible to derive inequalities for all
these steps, but writing and manually checking these inequalities
quickly becomes complex. Instead, we wrote a Maple script that
computes actual upper bounds, following the overall structure of the
proof. Here we give numerical values in a number of examples.

As mentioned in the introduction, D'Andrea et al. in
\cite{d2019reductions} give a bound similar to the one in our main
theorem, which is valid for systems which are known to have finitely
many solutions in $\C^n$ (in particular, if the number of given
equations $s$ is less than $n$, these are systems with no
solutions). We denote this bound by $A$, and compare with our bound
derived from $\Delta'$ as defined in the previous section, which we
denote as $B$. For each choice of parameters $(n,s,d,h)$ we compute
the numerical values of $A$ and $B$ and also report the ratio $B/A$
for comparison. All experiments were carried out in Maple (see \href{https://github.com/Jesse-Allister-Kasien-Elliott/Primes-of-bad-reduction-for-systems-of-polynomial-equations}{repository}).

We perform a sweep through each parameter, varying the parameter while
holding the others fixed.  In Table \ref{tab:sweep-n}, we sweep
through $n$ while fixing $s=5, d=2,$ and $h=10$. As $n$ increases, the
bounds grow more quickly than with the other parameters. In Table
\ref{tab:sweep-s} we sweep through $s$ while fixing $n=5, d=2,$ and
$h=10$. In Table \ref{tab:sweep-d} we sweep through $d$ while fixing
$n=3, s=5,$ and $h=10$. Lastly, in Table \ref{tab:sweep-h}, we sweep
through $h$ while fixing $n=3, s=5,$ and $d=3$.

\begin{table}[h!]
\centering
\begin{tabular}{c|c|c|c}
$n$ & $A$ & $B$ & $B/A$ \\
\hline
1  & $1.9921{\times}10^{4}$  & $3.4824{\times}10^{6}$  & $1.7481{\times}10^{2}$ \\
2  & $2.3695{\times}10^{5}$  & $4.3930{\times}10^{8}$  & $1.8540{\times}10^{3}$ \\
5  & $2.5031{\times}10^{8}$  & $3.3880{\times}10^{13}$ & $1.3535{\times}10^{5}$ \\
10 & $1.5907{\times}10^{13}$ & $1.2561{\times}10^{20}$ & $7.8968{\times}10^{6}$ \\
20 & $3.5113{\times}10^{22}$ & $2.2805{\times}10^{31}$ & $6.4947{\times}10^{8}$ \\
50 & $1.1769{\times}10^{50}$ & $3.4204{\times}10^{61}$ & $2.9063{\times}10^{11}$
\end{tabular}
\caption{Sweep in the number of variables $n$. The remaining parameters are fixed with $s=5$, $d=2$, $h=10$.}
\label{tab:sweep-n} 
\end{table}

\begin{table}[h!]
\centering
\begin{tabular}{c|c|c|c}
$s$ & $A$ & $B$ & $B/A$ \\
\hline
2   & $2.0540{\times}10^{8}$ & $1.3552{\times}10^{13}$ & $6.5979{\times}10^{4}$ \\
5   & $2.5031{\times}10^{8}$ & $3.3880{\times}10^{13}$ & $1.3535{\times}10^{5}$ \\
10  & $2.8429{\times}10^{8}$ & $6.7760{\times}10^{13}$ & $2.3834{\times}10^{5}$ \\
20  & $3.1827{\times}10^{8}$ & $1.3552{\times}10^{14}$ & $4.2580{\times}10^{5}$ \\
50  & $3.6319{\times}10^{8}$ & $3.3880{\times}10^{14}$ & $9.3284{\times}10^{5}$ \\
100 & $3.9717{\times}10^{8}$ & $6.7760{\times}10^{14}$ & $1.7061{\times}10^{6}$
\end{tabular}
\caption{Sweep in the number of equations $s$. The remaining parameters are fixed with $n=5$, $d=2$, $h=10$.}
\label{tab:sweep-s} 
\end{table}

\begin{table}[h!]
\centering
\begin{tabular}{c|c|c|c}
$d$ & $A$ & $B$ & $B/A$ \\
\hline
2  & $2.5455{\times}10^{6}$  & $2.6014{\times}10^{10}$ & $1.0219{\times}10^{4}$ \\
3  & $2.0926{\times}10^{8}$  & $2.8950{\times}10^{12}$ & $1.3835{\times}10^{4}$ \\
5  & $5.5270{\times}10^{10}$ & $1.3592{\times}10^{15}$ & $2.4593{\times}10^{4}$ \\
10 & $1.0949{\times}10^{14}$ & $6.0890{\times}10^{18}$ & $5.5611{\times}10^{4}$ \\
20 & $2.2045{\times}10^{17}$ & $2.7274{\times}10^{22}$ & $1.2372{\times}10^{5}$
\end{tabular}
\caption{Sweep in the degree $d$. The remaining parameters are fixed with $n=3$, $s=5$, $h=10$.}
\label{tab:sweep-d} 
\end{table}

\begin{table}[h!]
\centering
\begin{tabular}{c|c|c|c}
$h$ & $A$ & $B$ & $B/A$ \\
\hline
1   & $1.8959{\times}10^{8}$ & $1.6600{\times}10^{12}$ & $8.7556{\times}10^{3}$ \\
2   & $1.9178{\times}10^{8}$ & $1.7972{\times}10^{12}$ & $9.3713{\times}10^{3}$ \\
5   & $1.9833{\times}10^{8}$ & $2.2089{\times}10^{12}$ & $1.1137{\times}10^{4}$ \\
10  & $2.0926{\times}10^{8}$ & $2.8950{\times}10^{12}$ & $1.3835{\times}10^{4}$ \\
100 & $4.0589{\times}10^{8}$ & $1.5245{\times}10^{13}$ & $3.7559{\times}10^{4}$
\end{tabular}
\caption{Sweep in the coefficient height $h$. The remaining parameters are fixed with $n=3$, $s=5$, $d=3$.}
\label{tab:sweep-h} 
\end{table}

The bounds in both papers involve a dominant term $d^{3n}$, multiplied
by a quantity polynomial in $nds$ and linear in $h$, so as expected,
the dependence on the number of variables $n$ is the most pronounced.
In the examples tested, $B$ ranges from about $10^{6}$ (for small~$n$)
up to about $10^{61}$ (for the largest~$n$), while increases in $s$,
$d$, and $h$ produce more moderate changes.  The ratio $B/A$ varies
between approximately $10^{2}$ and $10^{11}$ across all experiments.

\paragraph*{Acknowledgements.} \'Eric Schost acknowledges
support from the Natural Sciences and Engineering Research Council of Canada
(NSERC) through a Discovery Grant.  Jesse Elliott acknowledges support by Inria through the OURAGAN project-team.

\bibliographystyle{plain} \bibliography{primes}

\end{document}